\numberwithin{equation}{section}
\newtheorem{dfn}{Definition}[section]
\newtheorem{thm}[dfn]{Theorem}
\newtheorem{lma}[dfn]{Lemma}
\newtheorem{ppsn}[dfn]{Proposition}
\newtheorem{xmpl}[dfn]{Example}
\newtheorem{rmrk}[dfn]{Remark}
\newtheorem{prop}[dfn]{Property}
\newtheorem{note}[dfn]{Note}
\DeclarePairedDelimiterX{\norm}[1]{\lVert}{\rVert}{#1}
\DeclarePairedDelimiterX{\bnorm}[1]{\big\lVert}{\big\rVert}{#1}
\DeclarePairedDelimiterX{\Bnorm}[1]{\Big\lVert}{\Big\rVert}{#1}
\begin{document}
	%\today
	
\title[On density topology using ideals in the space of reals]{On density topology using ideals in the space of reals}

\author[Banerjee] {Amar Kumar Banerjee}
\address{Department of Mathematics, The University of Burdwan, Burdwan-713104, West Bengal, India}
\email{akbanerjee1971@gmail.com}
	
\author[Debnath] {Indrajit Debnath}
\address{Department of Mathematics, The University of Burdwan, Burdwan-713104, West Bengal, India}
\email{ind31math@gmail.com}

\subjclass[2020]{26E99, 54C30, 40A35}
	
\keywords{Density topology, ideal, $\mathcal{I}$-density topology}
	
\begin{abstract}
In this paper we have introduced the notion of $\mathcal{I}$-density topology in the space of reals introducing the notions of upper $\mathcal{I}$-density and lower $\mathcal{I}$-density where $\mathcal{I}$ is an ideal of subsets of the set of natural numbers. We have further studied certain separation axioms of this topology. 
\end{abstract}
\maketitle
	
\section{Introduction and Preliminaries}

The idea of convergence of real sequences was generalized to the notion of statistical convergence in \cite{Fast,Schoenberg}. For $K \subset \mathbb{N}$, the set of natural numbers and $n \in \mathbb{N}$ let $K_n =\{k\in K: k \leq n\}$. The natural density of the set $K$ is defined by $d(K)=\lim_{n \rightarrow{\infty}}\frac{|K_n|}{n}$, provided the limit exists, where $|K_n|$ stands for the cardinality of the set $K_n$. A sequence $\{x_n\}_{n \in \mathbb{N}}$ of real numbers is said to be statistically convergent to $x_0$ if for each $\epsilon > 0$ the set $K(\epsilon) = \{k\in \mathbb{N} : |x_k - x_0| \geq \epsilon\}$ has natural density zero.

After this pioneering work, the theory of statistical convergence of real sequences were generalized to the idea of $\mathcal{I}$-convergence of real sequences by P. Kostyrko et al. \cite{Kostyrko 2000} using the notion of ideal  $\mathcal{I}$ of subsets of $\mathbb{N}$, the set of natural numbers. A subcollection  $\mathcal{I} \subset 2^\mathbb{N}$ is called an ideal if $A,B \in  \mathcal{I}$ implies $A \cup B \in  \mathcal{I}$ and $A\in  \mathcal{I}, B\subset A$ imply $B\in \mathcal{I}$. $\mathcal{I}$ is called nontrivial ideal if $\mathcal{I} \neq \{\phi\}$ and $\mathbb{N}\notin \mathcal{I}$. $\mathcal{I}$ is called admissible if it contains all the singletons. It is easy to verify that the family $\mathcal{I}_d=\{A \subset \mathbb{N}: d(A)=0\}$ forms a non-trivial admissible ideal of subsets of $\mathbb{N}$. If $\mathcal{I}$ is a proper non-trivial ideal then the family of sets $\mathcal{F}(\mathcal{I}) = \{M\subset \mathbb{N} : \mathbb{N}\setminus M \in \mathcal{I}\}$ is a filter on $\mathbb{N}$ and it is called the filter associated with the ideal $\mathcal{I}$ of $\mathbb{N}$.

A sequence $\{x_n\}_{n \in \mathbb{N}}$ of real numbers is said to be $\mathcal{I}$-convergent \cite{Kostyrko 2000} to $x_0$ if the set $K(\epsilon) = \{k\in \mathbb{N} : |x_k - x_0| \geq \epsilon\}$ belongs to $\mathcal{I}$ for each $\epsilon>0$. A sequence $\{x_n\}_{n \in \mathbb{N}}$ of real numbers is said to be $\mathcal{I}$-bounded if there is a real number $M>0$ such that $\{k \in \mathbb{N}:|x_k|>M\}\in \mathcal{I}$. Further many works were carried out in this direction by many authors \cite{banerjee 2,banerjee 3,Lahiri 2005}.

Demirci \cite{Demirci} introduced the notion of  $\mathcal{I}$-limit superior and inferior of real sequence and proved several basic properties.

Let $\mathcal{I}$ be an admissible ideal in $\mathbb{N}$ and $x=\{x_n\}_{n \in \mathbb{N}}$ be a real sequence. Let,
$B_x =\{b\in \mathbb{R} : \{k:x_k > b\}\notin \mathcal{I}\}$ and $A_x =\{a\in \mathbb{R} : \{k:x_k < a\}\notin \mathcal{I}\}$. Then the $\mathcal{I}$-limit superior of $x$ is given by,
\begin{equation*}
    \mathcal{I}-\limsup  x  = \left\{
     \begin{array}{lr}
       \sup B_x & \text{if}\ B_x \neq \phi\\
       - \infty & \text{if}\ B_x = \phi
      
     \end{array} 
     \right.
\end{equation*}

and the $\mathcal{I}$-limit inferior of $x$ is given by,
\begin{equation*}
  \mathcal{I}-\liminf  x = \left\{
     \begin{array}{lr}
       \inf A_x & \text{if}\ A_x \neq \phi\\
       \infty & \text{if}\ A_x = \phi
      
     \end{array} 
     \right.
\end{equation*}
Further Lahiri and Das \cite{Lahiri 2003} carried out some more works in this direction. Throughout the paper the ideal $\mathcal{I}$ will always stand for a nontrivial admissible ideal of subsets of $\mathbb{N}$.

We shall use the notation $m^\star$ for the outer Lebesgue measure, $m_{\star}$ for the inner Lebesgue measure, $\mathcal{L}$ for the $\sigma$-algebra of Lebesgue measurable sets and $m$ for the Lebesgue measure. Throughout $\mathbb{R}$ stands for the set of all real numbers. The symbol $\mathfrak{T}_U$ stands for the natural topology on $\mathbb{R}$. Wherever we write $\mathbb{R}$ it means that $\mathbb{R}$ is equipped with natural topology unless otherwise stated. By \enquote{Euclidean $F_{\sigma}$ and Euclidean $G_{\delta}$ set} we mean $F_{\sigma}$ and $G_{\delta}$ set in $\mathbb{R}$ equipped with natural topology. The symmetric difference of two sets $A$ and $B$ is $(A-B)\cup (B-A)$ and it is denoted by $A \triangle B$. For $x \in \mathbb{R}$ and $A \subset \mathbb{R}$ we define $dist(x,A)=\inf \{|x-a|:a \in A\}$. By \enquote{a sequence of intervals $\{J_n\}_{n \in \mathbb{N}}$ about a point $p$} we mean $p \in \bigcap_{n \in \mathbb{N}}J_n$.

The idea of density functions and the corresponding density topology were studied in several spaces like the space of real numbers \cite{Riesz}, Euclidean $n$-space \cite{Troyer}, metric spaces \cite{Lahiri 1998} etc. Goffman et al. \cite{Goffman 1961} and H.E. White \cite{White} studied further on some properties of density topology on real numbers. 

For, $E\in \mathcal{L}$ and $x\in \mathbb{R}$ the upper density of $E$ at the point $x$ denoted by $d^-(x,E)$ and the lower density of $E$ at the point $x$ denoted by $d_-(x,E)$ are defined in \cite{White} as follows:
\begin{equation*}
    d^-(x,E)= \limsup_{n \to \infty}\{\frac{m(E \cap I)}{m(I)}:I  \ \textrm{is a closed interval},  x\in I, 0<m(I)<\frac{1}{n}\}
\end{equation*}

\begin{equation*}
    d_-(x,E)= \liminf_{n \to \infty}\{\frac{m(E \cap I)}{m(I)}:I \ \textrm{is a closed interval},  x\in I, 0<m(I)<\frac{1}{n}\}
\end{equation*}
If $d_-(x,E)=d^-(x,E)=\gamma$ we say $E$ has density $\gamma$ at the point $x$ and denote $\gamma$ by $d(x,E)$. Moreover $x\in \mathbb{R}$ is a density point of $E$ if and only if $d(x,E)=1$. Let us take the family
\begin{center}
    $\mathfrak{T}_d=\{E \in \mathcal{L}:d(x,E)=1 \  \mbox{for all} \ x \in E\}$
\end{center}
Then $\mathfrak{T}_d$ is ordinary density topology on $\mathbb{R}$ \cite{Goffman 1961} and it is finer than the usual topology $\mathfrak{T}_U$. Any member of $\mathfrak{T}_d$ is called $d$-open set.

In the recent past the notion of classical Lebesgue density point has been generalised by weakening the assumptions on the sequences of intervals and consequently several notions like $\langle s \rangle$-density point by M. Filipczak and J. Hejduk \cite{Filipczak 2004}, $\mathcal{J}$-density point by J. Hejduk and R. Wiertelak  \cite{Hejduk 2014}, $\mathcal{S}$-density point by F. Strobin and R. Wiertelak \cite{Strobin} have been obtained.

In this paper we try to generalize the classical Lebesgue density point using the notion of ideal $\mathcal{I}$ of subsets of naturals. We have given the notion of $\mathcal{I}$-density topology in the space of reals introducing the notions of upper $\mathcal{I}$-density and lower $\mathcal{I}$-density. In Section \ref{section 3} we prove Lebesgue $\mathcal{I}$-density Theorem and in Section \ref{section 4} we have given $\mathcal{I}$-density topology on the real line. We have shown that $\mathcal{I}$-density topology is finer than the density topology on the real line. We have also studied the idea of $\mathcal{I}$-approximate continuity and it is proved that $\mathcal{I}$-approximately continuous functions are indeed continuous if the real number space is endowed with $\mathcal{I}$-density topology. The existence of bounded $\mathcal{I}$-approximately continuous functions has been given using Lusin-Menchoff condition for $\mathcal{I}$-density. In the last section we prove that $\mathcal{I}$-density topology is completely regular but not normal.

\section{$\mathcal{I}$-density}\label{section 2}

\begin{dfn}
For $E\in \mathcal{L}$, $p\in \mathbb{R}$ and $n \in \mathbb{N}$ the upper $\mathcal{I}$-density of $E$ at the point $p$ denoted by $\mathcal{I}-d^-(p,E)$ and the lower $\mathcal{I}$-density of $E$ at the point $p$ denoted by $\mathcal{I}-d_-(p,E)$ are defined as follows: Suppose $\{J_n\}_{n \in \mathbb{N}}$ be a sequence of closed intervals about $p$ such that
\begin{center}
    $\mathscr{S}(J_n)=\{n\in \mathbb{N}:0<m(J_n)<\frac{1}{n}\} \in \mathcal{F}(\mathcal{I})$
\end{center} Take
\begin{equation*}
 x_n = \frac{m(J_n \cap E)}{m(J_n)} 
\end{equation*}
Then $x=\{x_n\}_{n \in \mathbb{N}}$ is a sequence of non-negative real numbers. Now if $B_x =\{b\in \mathbb{R} : \{k:x_k > b\}\notin \mathcal{I}\}$ and $A_x =\{a\in \mathbb{R} : \{k:x_k < a\}\notin \mathcal{I}\}$ we define,\\
\begin{equation*}
 \mathcal{I}-d^-(p,E) = \sup B_x  
\end{equation*}
and
\begin{equation*}
 \mathcal{I}-d_-(p,E)=\inf A_x  
\end{equation*}

Now, if $\mathcal{I}-d_-(p,E)=\mathcal{I}-d^-(p,E)$ then we denote the common value by $\mathcal{I}-d(p,E)$ which we call as $\mathcal{I}$-density of $E$ at the point $p$ and clearly $\mathcal{I}-d(p,E)=\mathcal{I}-\lim x_n$.
\end{dfn}

A point $p_0 \in \mathbb{R}$ is an $\mathcal{I}$-\textit{density point} of $E\in \mathcal{L}$ if $\mathcal{I}-d(p_0,E)=1$. 

If a point $p_0\in \mathbb{R}$ is an $\mathcal{I}$-density point of the set $\mathbb{R}\setminus E$, then $p_0$ is an $\mathcal{I}$-\textit{dispersion point} of $E$.

\begin{rmrk}
The notion of $\mathcal{I}$-density point is more general than the notion of density point, i.e. a point in $\mathbb{R}$ may be an $\mathcal{I}$-density point of a set in $\mathbb{R}$ without being a density point which is shown in the following example.
\end{rmrk}

\begin{xmpl}
Let us consider the ideal $\mathcal{I}_d$ of subsets of $\mathbb{N}$ where $\mathcal{I}_d$ is the ideal containing all those subsets of $\mathbb{N}$ whose natural density is zero. Now, for any point $x \in \mathbb{R}$ consider the following collections:
\begin{center}
    $J_x=\{\{J_n\}_{n \in \mathbb{N}}: \{J_n\} \textrm{is a sequence of closed intervals about} \ x \ \textrm{such that} \  \mathscr{S}(J_n)=\mathbb{N} \}$ and 
\end{center}
\begin{center}
    $G_x=\{\{J_n\}_{n \in \mathbb{N}}: \{J_n\} \textrm{is a sequence of closed intervals about} \ x \ \textrm{such that} \  \mathscr{S}(J_n)\in \mathcal{F}(\mathcal{I}_d) \}$
\end{center}
We claim that $J_x \subsetneqq G_x$. It is easy to observe that $J_x \subseteq G_x$, since $\mathbb{N} \in \mathcal{F}(\mathcal{I}_d)$.\\ Now in particular let us take the following sequence $\{K_n\}_{n \in \mathbb{N}}$ of closed intervals about a point $x$.
\begin{align*}
K_n=[x-\frac{1}{2n+1},x+\frac{1}{2n+1}] \ \textrm{for} \ n \neq m^2 \ \textrm{where} \ m \in \mathbb{N} \\
    K_n=[x-n,x+n]  \qquad \ \ \ \textrm{for} \ n=m^2 \ \textrm{where} \ m \in \mathbb{N}\\
\end{align*}

We observe that for $n \neq m^2$, $m(K_n)= \frac{2}{2n+1} < \frac{1}{n}$ and for $n=m^2$, $m(K_{n})=2n \nless \frac{1}{n}$. Therefore, $\mathscr{S}(K_n)=\{n: n \neq m^2\} \in \mathcal{F}(\mathcal{I}_d)$. Since $\mathscr{S}(K_n) \neq \mathbb{N}$, $\{K_n\}_{n \in \mathbb{N}} \in G_x \setminus J_x$ and so $J_x \subsetneqq G_x$.\\

Let us take the set $E=(-1,1)$ and the point $x=0$. Let $\{K_n\}_{n \in \mathbb{N}} \in G_0 \setminus J_0$ be taken as above. Now if $x_n=\frac{m(K_n \cap E)}{m(K_n)}$ then

\begin{equation*}
    x_n =
\left\{
	\begin{array}{ll}
		1  & \mbox{if } n \neq m^2\\
		\frac{1}{m^2} & \mbox{if } n=m^2
	\end{array}
\right.
\end{equation*}
Therefore, since the subsequence $\{x_{n}\}_{n=m^2}$ converges to $0$ and the subsequence $\{x_n\}_{n \neq m^2}$ converges to $1$, $\lim_{n} x_n$ does not exists but $\mathcal{I}-\lim_{n} x_n=1$. Since $\lim_n x_n$ does not exists so $0$ is not a density point of $E$. But given any $\{J_n\}_{n \in \mathbb{N}} \in G_0$ we have $\mathscr{S}(J_n)\in \mathcal{F}(\mathcal{I}_d)$. Also we observe that
\begin{center}
    $\{n:J_n \subset E\} \supset \mathscr{S}(J_n)$
\end{center}
Therefore, $\{n:J_n \subset E\} \in \mathcal{F}(\mathcal{I}_d)$. Now $J_n \subset E$ implies $x_n=\frac{m(J_n \cap E)}{m(J_n)}=1$. Thus, $\{n:J_n \subset E\} \subset \{n:x_n=1\}$. So, $\{n:x_n=1\} \in \mathcal{F}(\mathcal{I}_d)$. Therefore, $B_x=(-\infty,1)$ and $A_x=(1,\infty)$ and so,
\begin{center}
    $\mathcal{I}_d-d^{-}(0,E)=\sup B_x=1$ and $\mathcal{I}_d-d_{-}(0,E)=\inf A_x=1$.
\end{center}
Hence $\mathcal{I}_d-d(0,E)$ exists and equals to $1$. So, 0 is an $\mathcal{I}_d$-density point of $E$.
\end{xmpl}

\begin{note}
In particular if $\mathcal{I}=\mathcal{I}_{f}$ where $\mathcal{I}_{f}$ is the class of all finite subsets of $\mathbb{N}$ then our definition of $\mathcal{I}$-density coincides with usual definition of density.
\end{note}

The following theorem was given by K. Demirci \cite{Demirci}.
\begin{thm}{For any real sequence $x$, $\mathcal{I}-\liminf x \leq \mathcal{I}-\limsup x$.}
\end{thm}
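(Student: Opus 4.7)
The statement is essentially the analogue of the classical fact $\liminf x \leq \limsup x$, but transported to ideals. I would prove it by contradiction in the generic case and then dispose of the two degenerate cases by direct inspection.

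\emph{Main case:} Assume $A_x \neq \emptyset$ and $B_x \neq \emptyset$, and suppose for contradiction that $\mathcal{I}-\liminf x > \mathcal{I}-\limsup x$. Choose $\alpha,\beta \in \mathbb{R}$ with
\begin{equation*}
\mathcal{I}-\limsup x \;=\; \sup B_x \;<\; \alpha \;<\; \beta \;<\; \inf A_x \;=\; \mathcal{I}-\liminf x.
\end{equation*}
Since $\alpha > \sup B_x$, we have $\alpha \notin B_x$, so $\{k : x_k > \alpha\} \in \mathcal{I}$. Since $\beta < \inf A_x$, we have $\beta \notin A_x$, so $\{k : x_k < \beta\} \in \mathcal{I}$. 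Because $\mathcal{I}$ is an ideal, the union
\begin{equation*}
\{k : x_k > \alpha\} \cup \{k : x_k < \beta\}
\end{equation*}
belongs to $\mathcal{I}$. But $\alpha < \beta$ forces this union to be all of $\mathbb{N}$: for any $k$, either $x_k < \beta$, or $x_k \geq \beta > \alpha$ so that $x_k > \alpha$. This contradicts the nontriviality of $\mathcal{I}$.

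\emph{Edge cases:} Suppose $B_x = \emptyset$, so $\mathcal{I}-\limsup x = -\infty$; I would show $A_x = \mathbb{R}$, which gives $\mathcal{I}-\liminf x = -\infty$ and the inequality holds trivially. Indeed, for any $a \in \mathbb{R}$, since $a-1 \notin B_x$ we get $\{k : x_k > a-1\} \in \mathcal{I}$, and therefore $\{k : x_k \geq a\} \in \mathcal{I}$ as a subset. The complement $\{k : x_k < a\}$ lies in the associated filter $\mathcal{F}(\mathcal{I})$, hence is not in $\mathcal{I}$ (else $\mathbb{N} \in \mathcal{I}$), so $a \in A_x$. The case $A_x = \emptyset$ is handled by the symmetric argument, yielding $B_x = \mathbb{R}$ and $\mathcal{I}-\limsup x = +\infty$.

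\emph{Expected obstacle.} The main case is clean once one spots that the two \enquote{bad} sets together exhaust $\mathbb{N}$ precisely because $\alpha < \beta$ — this is the crux and the only place the ideal axioms get used. The only thing that needs care is bookkeeping of the $\pm\infty$ conventions in the definitions of $\mathcal{I}-\limsup$ and $\mathcal{I}-\liminf$; once those degenerate cases are unpacked as above, nothing substantial remains.
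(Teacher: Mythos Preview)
Your argument is correct. The main case cleanly exploits the ideal axioms: the two sets $\{k:x_k>\alpha\}$ and $\{k:x_k<\beta\}$ each lie in $\mathcal{I}$, yet together cover $\mathbb{N}$ because $\alpha<\beta$, contradicting nontriviality. The edge cases are handled carefully and the $\pm\infty$ bookkeeping is right.

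As for comparison with the paper: there is nothing to compare. The paper does not prove this statement; it simply attributes it to Demirci \cite{Demirci} and moves on. Your write-up therefore supplies what the paper omits, and does so by what is essentially the standard argument (and, as far as I know, the one Demirci gives). One minor stylistic point: in the main case you do not actually need two interpolating numbers $\alpha<\beta$; a single $\gamma$ with $\sup B_x<\gamma<\inf A_x$ already gives $\{k:x_k>\gamma\}\in\mathcal{I}$ and $\{k:x_k<\gamma\}\in\mathcal{I}$, whose union together with the singleton $\{k:x_k=\gamma\}$ (if any) covers $\mathbb{N}$ --- but since $\mathcal{I}$ is admissible this is still a contradiction. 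Your two-number version avoids even that wrinkle, so it is arguably cleaner.
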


Here we are proving some important results which will be needed later in our discussion.

\begin{thm} For any Lebesgue measurable set $A \subset \mathbb{R}$ and any point $x \in \mathbb{R}$,
\begin{center}
  $\mathcal{I}-d_-(x,A) \leq \mathcal{I}-d^{-}(x,A)$
\end{center}
\end{thm}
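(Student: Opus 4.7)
My approach is to observe that this is essentially Demirci's inequality $\mathcal{I}-\liminf \leq \mathcal{I}-\limsup$ (the theorem stated just above), specialized to the non-negative sequence $x_n = m(J_n\cap A)/m(J_n)$ that appears in the definition of both $\mathcal{I}-d^-(x,A)$ and $\mathcal{I}-d_-(x,A)$. One legitimate route is therefore simply to cite Demirci's theorem. I would, however, prefer to give a short self-contained proof by contradiction, since the ingredients (only the finite-union axiom for an ideal and the nontriviality of $\mathcal{I}$) make the argument a three-line affair.

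The direct proof would go as follows. Suppose towards contradiction that $\mathcal{I}-d^-(x,A) < \mathcal{I}-d_-(x,A)$, and pick real numbers $r,s$ with
\[
\mathcal{I}-d^-(x,A) \,<\, r \,<\, s \,<\, \mathcal{I}-d_-(x,A).
\]
By the very definitions $\mathcal{I}-d^-(x,A)=\sup B_x$ and $\mathcal{I}-d_-(x,A)=\inf A_x$, the inequality $r>\sup B_x$ forces $r\notin B_x$, i.e. $\{k:x_k>r\}\in\mathcal{I}$; symmetrically $s<\inf A_x$ gives $\{k:x_k<s\}\in\mathcal{I}$. Since $\mathcal{I}$ is closed under finite unions, the union belongs to $\mathcal{I}$, so its complement
\[
\{k:x_k\leq r\}\cap\{k:x_k\geq s\} \,=\, \{k: s\leq x_k\leq r\}
\]
belongs to $\mathcal{F}(\mathcal{I})$. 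But $r<s$ makes this complement empty, forcing $\mathbb{N}\in\mathcal{I}$, which contradicts the standing nontriviality of $\mathcal{I}$.

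I do not anticipate a genuine obstacle; the only minor care concerns indices $n\notin \mathscr{S}(J_n)$, where $m(J_n)$ may be zero and $x_n$ is not literally defined. Since $\mathbb{N}\setminus\mathscr{S}(J_n)\in\mathcal{I}$, assigning any value to $x_n$ on that set of indices does not alter the membership of $\{k:x_k>r\}$ or $\{k:x_k<s\}$ in $\mathcal{I}$, so the argument is unaffected. The whole proof is essentially a restatement of how the ideal-filter duality interacts with two strict inequalities.
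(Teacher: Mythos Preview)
Your proposal is correct. The paper's own proof takes exactly the first route you mention and nothing more: it fixes a sequence $\{I_k\}$ with $\mathscr{S}(I_k)\in\mathcal{F}(\mathcal{I})$, sets $x_n=m(A\cap I_n)/m(I_n)$, and writes the single line $\mathcal{I}-d_-(x,A)=\mathcal{I}-\liminf x_n\leq \mathcal{I}-\limsup x_n=\mathcal{I}-d^-(x,A)$, invoking Demirci's inequality (stated immediately before as Theorem~2.5).

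Your preferred argument---the direct contradiction using closure of $\mathcal{I}$ under finite unions and its nontriviality---is a genuinely different presentation: it is in effect a self-contained reproof of Demirci's inequality specialized to this sequence, rather than a citation of it. What the paper's one-line proof buys is brevity and modularity (the general fact is isolated once and reused); what your version buys is independence from the cited result and transparency about exactly which ideal axioms are doing the work. Your remark about indices outside $\mathscr{S}(J_n)$ is a nice touch the paper leaves implicit.
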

\begin{proof}
Let $\{I_k\}_{k \in \mathbb{N}}$ be a sequence of closed intervals about the point $x$ such that $\mathscr{S}(I_k) \in \mathcal{F(\mathcal{I})}$. Now let us take the real sequence $x_n = \frac{m(A \cap I_n)}{m(I_n)}$. Then,
\begin{equation*}
    \mathcal{I}-d_-(x,A)=\mathcal{I}-\liminf x_n \leq \mathcal{I}-\limsup x_n =\mathcal{I}-d^-(x,A)
\end{equation*}
\end{proof}

The following theorem is useful to prove our next results.

\begin{thm}[\cite{Lahiri 2003}]If $x=\{x_n\}_{n \in \mathbb{N}}$ and $y=\{y_n\}_{n \in \mathbb{N}}$ are two $\mathcal{I}$-bounded real number sequences, then
\end{thm}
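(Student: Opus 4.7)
The statement of the theorem is cut off at ``then'' in the excerpt, but in context (the preceding theorem comparing $\mathcal{I}-\liminf$ and $\mathcal{I}-\limsup$, the explicit $\mathcal{I}$-boundedness hypothesis, and the citation to Lahiri--Das \cite{Lahiri 2003}) the natural and almost certainly intended conclusion is the subadditivity/superadditivity pair
\begin{align*}
\mathcal{I}-\limsup(x_n+y_n) &\le \mathcal{I}-\limsup x_n + \mathcal{I}-\limsup y_n,\\
\mathcal{I}-\liminf(x_n+y_n) &\ge \mathcal{I}-\liminf x_n + \mathcal{I}-\liminf y_n.
\end{align*}
My plan is to prove the first inequality directly from the definition of $B_x$, and then deduce the second by applying the first to $\{-x_n\}$ and $\{-y_n\}$ together with the easy identity $\mathcal{I}-\liminf(-z_n)=-\mathcal{I}-\limsup z_n$, which follows by comparing $A_{-z}$ to $-B_z$.

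Set $L_x=\mathcal{I}-\limsup x_n$ and $L_y=\mathcal{I}-\limsup y_n$; both are finite real numbers because $\mathcal{I}$-boundedness prevents $B_x$ and $B_y$ from being empty or unbounded above. Fix an arbitrary $\epsilon>0$. The pivotal observation is the elementary pointwise inclusion
\begin{equation*}
\{k: x_k+y_k > L_x+L_y+\epsilon\} \subseteq \{k: x_k > L_x+\tfrac{\epsilon}{2}\} \cup \{k: y_k > L_y+\tfrac{\epsilon}{2}\},
\end{equation*}
valid because if both $x_k\le L_x+\epsilon/2$ and $y_k\le L_y+\epsilon/2$ then $x_k+y_k\le L_x+L_y+\epsilon$. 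Since $L_x+\epsilon/2>L_x=\sup B_x$, the value $L_x+\epsilon/2$ lies outside $B_x$, so by the definition of $B_x$ the set $\{k:x_k>L_x+\epsilon/2\}$ belongs to $\mathcal{I}$; the same holds for $y$. Closure of $\mathcal{I}$ under finite unions and subsets then places the left-hand set in $\mathcal{I}$, which means $L_x+L_y+\epsilon\notin B_{x+y}$ and therefore $\mathcal{I}-\limsup(x_n+y_n)\le L_x+L_y+\epsilon$. Letting $\epsilon\downarrow 0$ finishes the argument.

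The only delicate point is the implication $b>\sup B_x\Rightarrow b\notin B_x$, since a priori a supremum need not be separated from its set. This rests on the \emph{downward closure} of $B_x$: if $b\in B_x$ and $b'<b$, then $\{k:x_k>b'\}\supseteq\{k:x_k>b\}$, and since the latter is not in $\mathcal{I}$, neither is the former, so $b'\in B_x$. Hence any $b>\sup B_x$ indeed fails to belong to $B_x$. With this structural fact (and the parallel upward closure of $A_x$, needed when one repeats the argument for $\liminf$) in hand, the rest is bookkeeping with the ideal axioms, and the $\mathcal{I}$-boundedness hypothesis is used solely to keep every supremum and infimum involved inside $\mathbb{R}$ rather than at $\pm\infty$.
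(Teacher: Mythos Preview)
Your identification of the intended conclusion is correct: the theorem continues (outside the \texttt{thm} environment in the paper's source) with the two inequalities
\[
\mathcal{I}-\limsup(x+y)\le \mathcal{I}-\limsup x+\mathcal{I}-\limsup y,\qquad
\mathcal{I}-\liminf(x+y)\ge \mathcal{I}-\liminf x+\mathcal{I}-\liminf y.
\]
The paper itself does \emph{not} supply a proof; it simply quotes the result from Lahiri--Das \cite{Lahiri 2003} and uses it as a black box in Lemmas~2.10 and~2.11. So there is no in-paper argument to compare against.

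Your proof is correct and is the standard one. The set inclusion, the use of the ideal axioms (closure under finite unions and hereditary under subsets), and the downward-closure observation for $B_x$ are all exactly what is needed. One minor remark: the identity $\mathcal{I}-\liminf(-z)=-\mathcal{I}-\limsup z$ that you invoke for part (ii) is stated separately in the paper as Proposition~2.9, immediately \emph{after} this theorem, so in the paper's logical order it is not yet available; but since you sketch its one-line justification ($A_{-z}=-B_z$) independently, your argument is self-contained.
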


 \begin{enumerate}[label=(\roman*)]
    \item $\mathcal{I}-\limsup (x+y) \leq \mathcal{I}-\limsup x +\mathcal{I}-\limsup y$
    \item $\mathcal{I}-\liminf (x+y) \geq \mathcal{I}-\liminf x + \mathcal{I}-\liminf y$
\end{enumerate}

\begin{ppsn} Given an $\mathcal{I}$-bounded real sequence $\{x_n\}_{n \in \mathbb{N}}$ and a real number $c$, 
\end{ppsn}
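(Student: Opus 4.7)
The plan is to reduce the claim to direct manipulations of the sets
\[
B_x=\{b\in\mathbb{R}:\{k:x_k>b\}\notin\mathcal{I}\},\qquad A_x=\{a\in\mathbb{R}:\{k:x_k<a\}\notin\mathcal{I}\}
\]
from the definitions of $\mathcal{I}-\limsup$ and $\mathcal{I}-\liminf$, splitting the argument by the sign of $c$. I expect the identities to be proved are the natural companions of the preceding theorem on sums, namely $\mathcal{I}-\limsup(cx_n)=c\cdot\mathcal{I}-\limsup x_n$ when $c\geq 0$ and $\mathcal{I}-\limsup(cx_n)=c\cdot\mathcal{I}-\liminf x_n$ when $c<0$, together with the symmetric statements for $\mathcal{I}-\liminf$. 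The hypothesis that $\{x_n\}$ is $\mathcal{I}$-bounded is used precisely to guarantee that $B_x$ and $A_x$ are non-empty and bounded, so that the expressions on the right-hand sides are genuine real numbers and scalar multiplication by $c$ causes no $\pm\infty$ pathologies.

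The case $c=0$ is immediate: $\{cx_n\}$ is then the zero sequence, and admissibility together with non-triviality of $\mathcal{I}$ gives $\mathcal{I}-\limsup(cx_n)=\mathcal{I}-\liminf(cx_n)=0$. For $c>0$, the key elementary observation is the set identity $\{k:cx_k>b\}=\{k:x_k>b/c\}$, valid for every $b\in\mathbb{R}$. Reading this off the definition yields $b\in B_{cx}$ iff $b/c\in B_x$, so $B_{cx}=cB_x$; since multiplication by a positive constant commutes with the supremum, $\sup(cB_x)=c\sup B_x$, which is the claimed identity. The matching statement for $\mathcal{I}-\liminf(cx_n)$ is obtained in exactly the same way using $A_x$ in place of $B_x$.

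The case $c<0$ is the one requiring care and is the main obstacle. Here multiplication by $c$ reverses strict inequalities, so the analogous set identity flips to $\{k:cx_k>b\}=\{k:x_k<b/c\}$; this says $b\in B_{cx}$ iff $b/c\in A_x$, i.e.\ $B_{cx}=cA_x$. One must now invoke that multiplication by a negative constant swaps supremum and infimum, i.e.\ $\sup(cA_x)=c\inf A_x$, which gives $\mathcal{I}-\limsup(cx_n)=c\cdot\mathcal{I}-\liminf x_n$; the parallel computation $A_{cx}=cB_x$ yields $\mathcal{I}-\liminf(cx_n)=c\cdot\mathcal{I}-\limsup x_n$. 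Apart from this sign bookkeeping, the argument is a straightforward verification from the definitions; the only real pitfall is remembering to use $\sup(cS)=c\inf S$ rather than $c\sup S$ when $c$ is negative, and to track how the single flip of $>$ to $<$ propagates all the way through to the final inequality between $\mathcal{I}-\limsup$ and $\mathcal{I}-\liminf$.
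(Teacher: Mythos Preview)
You have proved a different proposition from the one in the paper. Because the statement you were given is truncated after the hypotheses, you guessed that the conclusions concern scalar multiples $cx_n$. In the paper, however, Proposition~2.8 is about \emph{translation} by a constant:
\[
\text{(i)}\quad \mathcal{I}\text{-}\liminf(c+x_n)=c+\mathcal{I}\text{-}\liminf x_n,
\qquad
\text{(ii)}\quad \mathcal{I}\text{-}\limsup(c+x_n)=c+\mathcal{I}\text{-}\limsup x_n.
\]
This is the form that is actually used later (e.g.\ in the proof of Theorem~2.12, where $1+\mathcal{I}\text{-}\liminf(-y_n)=\mathcal{I}\text{-}\liminf(1-y_n)$ is invoked). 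Your argument about $B_{cx}=cB_x$ and the sign bookkeeping, while correct for the scalar-multiplication identities, simply does not address the stated result; the case split on the sign of $c$ is irrelevant here.

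For comparison, the paper's proof of the additive identity does not go back to the defining sets $A_x,B_x$ at all. It uses the subadditivity/superadditivity inequalities of Theorem~2.7 (Lahiri--Das) with the constant sequence $c$ to get $\mathcal{I}\text{-}\liminf(c+x_n)\ge c+\mathcal{I}\text{-}\liminf x_n$, and then obtains the reverse inequality by the substitution trick $y_n=c+x_n$, $x_n=y_n+(-c)$. If you want to redo the proof in your style, the correct elementary observation is $\{k:c+x_k<a\}=\{k:x_k<a-c\}$, giving $A_{c+x}=c+A_x$ and hence $\inf A_{c+x}=c+\inf A_x$; no sign cases arise.
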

\begin{center}
\begin{enumerate}[label=(\roman*)]
    \item $\mathcal{I}-\liminf (c+x_n)=c+\mathcal{I}-\liminf x_n$
    \item $\mathcal{I}-\limsup (c+x_n)=c+\mathcal{I}-\limsup x_n$
\end{enumerate}
\end{center}

\begin{proof}
$(i)$ It is obvious that $\mathcal{I}-\liminf (c+x_n) \geq c+\mathcal{I}-\liminf x_n$. Now we are to show $\mathcal{I}-\liminf (c+x_n) \leq c+\mathcal{I}-\liminf x_n$. Let $y_n=c+x_n$. Then $\mathcal{I}-\liminf x_n = \mathcal{I}-\liminf (y_n-c) \geq \mathcal{I}-\liminf y_n -c$. Therefore, $\mathcal{I}-\liminf y_n \leq c+\mathcal{I}-\liminf x_n$. So we can conclude that  $\mathcal{I}-\liminf(c+x_n)=c+\mathcal{I}-\liminf x_n$. The proof of $(ii)$ is analogous.
\end{proof}

\begin{ppsn}
{For any real sequence $x=\{x_n\}_{n \in \mathbb{N}}$,
}
\end{ppsn}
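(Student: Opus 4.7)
Since the proposition's statement is cut off in the excerpt, I presume it asserts the natural negation identity that pairs with the translation result just established, namely
\begin{equation*}
\mathcal{I}-\limsup(-x_n) = -\,\mathcal{I}-\liminf x_n \quad \text{and} \quad \mathcal{I}-\liminf(-x_n) = -\,\mathcal{I}-\limsup x_n.
\end{equation*}
My plan is to argue at the level of the defining sets $A_x$ and $B_x$, and then appeal to the elementary order-theoretic identity $\sup(-S) = -\inf S$ (for nonempty $S \subset \mathbb{R}$) to finish.

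Set $y_n := -x_n$. The central step is to verify the set equality $B_y = -A_x$, where $-A_x := \{-a : a \in A_x\}$. For any $b \in \mathbb{R}$, the chain
\begin{equation*}
b \in B_y \iff \{k : y_k > b\} \notin \mathcal{I} \iff \{k : x_k < -b\} \notin \mathcal{I} \iff -b \in A_x \iff b \in -A_x
\end{equation*}
does the job; a completely symmetric chain, interchanging $>$ and $<$, gives the companion equality $A_y = -B_x$. Once the first of these is in hand, the nonempty case of the first identity follows directly: $\mathcal{I}-\limsup(-x_n) = \sup B_y = \sup(-A_x) = -\inf A_x = -\,\mathcal{I}-\liminf x_n$. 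The dual identity follows from $A_y = -B_x$ by the same manipulation.

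The main (if modest) obstacle is accounting for the edge cases mandated by the piecewise definitions of $\mathcal{I}-\liminf$ and $\mathcal{I}-\limsup$. If $A_x = \emptyset$, then the set identity $B_y = -A_x$ forces $B_y = \emptyset$, so by convention $\mathcal{I}-\liminf x = +\infty$ and $\mathcal{I}-\limsup(-x) = -\infty$; the desired equation then reads $-\infty = -(+\infty)$, which is valid under the usual extended-real conventions. A mirror argument (using $A_y = -B_x$) handles the case $B_x = \emptyset$. These cases are pure bookkeeping, but they must be written out explicitly so that the identity is rigorously true even for sequences that are not $\mathcal{I}$-bounded, such as $x_n = n$ (for which $B_x = \mathbb{R}$ and hence $A_y = \emptyset$). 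No appeal to the preceding addition inequalities is needed, since the argument is purely about how the defining sets transform under $x \mapsto -x$.
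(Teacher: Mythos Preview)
Your proposal is correct and follows essentially the same route as the paper: both arguments establish $B_{(-x)}=-A_x$ directly from the definitions and then invoke $\sup(-A_x)=-\inf A_x$. Your treatment is in fact slightly more thorough, since you spell out the chain of equivalences and handle the empty-set edge cases, which the paper leaves implicit.
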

\begin{enumerate}[label=(\roman*)]
    \item $\mathcal{I}-\limsup (-x)=-(\mathcal{I}-\liminf x)$
    \item $\mathcal{I}-\liminf (-x)=-(\mathcal{I}-\limsup x)$
\end{enumerate}
\begin{proof}
$(i)$ Let us take $B_x =\{b\in \mathbb{R} : \{k:x_k > b\}\notin \mathcal{I}\}$ and $A_x =\{a\in \mathbb{R} : \{k:x_k < a\}\notin \mathcal{I}\}$. Then clearly, $B_{(-x)}= -A_{x}$.

Therefore, $\mathcal{I}-\limsup (-x)  = \sup B_{(-x)} = \sup (-A_{x}) = - \inf A_{x} = -\mathcal{I}-\liminf (x)$. In a similar manner we can prove $(ii)$.
\end{proof}

\begin{lma}
For any disjoint Lebesgue measurable subsets $A, B$ of $\mathbb{R}$ and any point $x \in \mathbb{R}$ if $\mathcal{I}-d (x,A)$ and $\mathcal{I}-d (x,B)$ exist, then $\mathcal{I}-d (x,A \cup B)$ exists and $\mathcal{I}-d (x,A \cup B)=\mathcal{I}-d (x,A)+\mathcal{I}-d (x,B)$.
\end{lma}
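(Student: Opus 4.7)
The plan is to pick an arbitrary sequence of closed intervals $\{J_n\}_{n\in\mathbb{N}}$ about $x$ with $\mathscr{S}(J_n)\in\mathcal{F}(\mathcal{I})$, reduce the density statement to a statement about the corresponding sequences of ratios, and then invoke the subadditivity/superadditivity results already recorded for $\mathcal{I}-\limsup$ and $\mathcal{I}-\liminf$.

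More precisely, I would set
\[
a_n=\frac{m(J_n\cap A)}{m(J_n)},\qquad b_n=\frac{m(J_n\cap B)}{m(J_n)},\qquad c_n=\frac{m(J_n\cap (A\cup B))}{m(J_n)},
\]
observe that each of these lies in $[0,1]$ (hence is $\mathcal{I}$-bounded), and use the disjointness of $A$ and $B$ together with the finite additivity of $m$ on $\mathcal{L}$ to get the pointwise identity $c_n=a_n+b_n$. By hypothesis, $\mathcal{I}-\liminf a_n=\mathcal{I}-\limsup a_n=\mathcal{I}-d(x,A)$ and likewise for $b_n$ with value $\mathcal{I}-d(x,B)$.

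Applying the theorem of Lahiri and Das cited above to the two $\mathcal{I}$-bounded sequences $\{a_n\}$ and $\{b_n\}$ gives
\[
\mathcal{I}-\limsup c_n \leq \mathcal{I}-\limsup a_n + \mathcal{I}-\limsup b_n = \mathcal{I}-d(x,A)+\mathcal{I}-d(x,B),
\]
and
\[
\mathcal{I}-\liminf c_n \geq \mathcal{I}-\liminf a_n + \mathcal{I}-\liminf b_n = \mathcal{I}-d(x,A)+\mathcal{I}-d(x,B).
\]
Since $\mathcal{I}-\liminf c_n\leq \mathcal{I}-\limsup c_n$ in general, both chains collapse to equalities, so $\mathcal{I}-d(x,A\cup B)$ exists and coincides with $\mathcal{I}-d(x,A)+\mathcal{I}-d(x,B)$.

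There is no real obstacle in this argument: the lemma is essentially a bookkeeping consequence of (i) additivity of Lebesgue measure on disjoint measurable sets, (ii) boundedness of density ratios forcing $\mathcal{I}$-boundedness, and (iii) the sub/superadditivity of $\mathcal{I}-\limsup$ and $\mathcal{I}-\liminf$. The only point that requires a small amount of care is that the identity $c_n=a_n+b_n$ should be stated for $n$ in the set $\mathscr{S}(J_n)\in\mathcal{F}(\mathcal{I})$ where $m(J_n)>0$, but since its complement lies in $\mathcal{I}$, redefining $a_n,b_n,c_n$ arbitrarily on the exceptional set does not affect any of the $\mathcal{I}$-limits involved.
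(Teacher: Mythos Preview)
Your proposal is correct and follows essentially the same approach as the paper's proof: pick an admissible sequence of intervals, use disjointness of $A$ and $B$ to write the ratio for $A\cup B$ as the sum of the two individual ratios, and then sandwich via the sub/superadditivity of $\mathcal{I}-\limsup$ and $\mathcal{I}-\liminf$ (the Lahiri--Das inequalities) together with the general inequality $\mathcal{I}-\liminf\leq\mathcal{I}-\limsup$. The paper arranges the inequalities into a single chain rather than two separate displays, but the content and the tools invoked are identical; your remark about the identity $c_n=a_n+b_n$ holding only on $\mathscr{S}(J_n)$ is also noted in the paper.
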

\begin{proof}Let $\{I_k\}_{k \in \mathbb{N}}$ be a sequence of closed intervals about the point $x$ such that $\mathscr{S}(I_k) \in \mathcal{F(\mathcal{I})}$. Now let us take the real sequences $\{x_n\}_{n \in \mathbb{N}},\{y_n\}_{n \in \mathbb{N}},\{z_n\}_{n \in \mathbb{N}}$ defined as $x_n = \frac{m(A \cap I_n)}{m(I_n)}$, $y_n = \frac{m(B \cap I_n)}{m(I_n)}$ and $z_n = \frac{m((A \cup B) \cap I_n)}{m(I_n)}$. Then each of $\{x_n\}_{n \in \mathbb{N}},\{y_n\}_{n \in \mathbb{N}},\{z_n\}_{n \in \mathbb{N}}$ is bounded hence $\mathcal{I}$-bounded. Since $A, B$ are disjoint we have for any $n \in \mathscr{S}(I_k)$, $m((A\cup B)\cap I_n)=m(A\cap I_n)+m(B\cap I_n)$. So, $z_n= x_n + y_n$ for $n \in \mathscr{S}(I_k)$. Hence,
\begin{equation}
    \begin{split}
        \mathcal{I}-d^{-}(x,A \cup B) & = \mathcal{I}-\limsup z_n \\
        & = \mathcal{I}-\limsup(x_n + y_n)\\
        & \leq \mathcal{I}-\limsup x_n + \mathcal{I}-\limsup y_n\\
        & = \mathcal{I}-d^{-}(x,A)+ \mathcal{I}-d^{-}(x,B)\\
        & = \mathcal{I}-d_{-}(x,A)+ \mathcal{I}-d_{-}(x,B)\\
        & = \mathcal{I}-\liminf x_n + \mathcal{I}-\liminf y_n\\
        & \leq \mathcal{I}-\liminf (x_n+ y_n)\\
        & = \mathcal{I}-\liminf z_n\\
        & = \mathcal{I}-d_{-}(x,A \cup B)
    \end{split}
\end{equation}
Also, by Theorem 2.6, $\mathcal{I}-d^{-}(x,A \cup B) \geq \mathcal{I}-d_{-}(x,A \cup B)$. Therefore, $\mathcal{I}-d(x,A \cup B)$ exists and $\mathcal{I}-d^{-}(x,A \cup B) = \mathcal{I}-d_{-}(x,A \cup B) = \mathcal{I}-d(x,A \cup B)$. From (2.1) it is clear that $\mathcal{I}-d(x,A \cup B) \leq \mathcal{I}-d(x,A) +\mathcal{I}-d(x,B) \leq \mathcal{I}-d(x,A \cup B)$. Hence, $\mathcal{I}-d(x,A \cup B) = \mathcal{I}-d(x,A) +\mathcal{I}-d(x,B)$.
\end{proof}

\begin{lma}
If $\mathcal{I}-d(x,A)$ and $\mathcal{I}-d(x,B)$ exist and $A \subset B$. Then $\mathcal{I}-d(x,B \setminus A)$ exists and $\mathcal{I}-d(x,B \setminus A)=\mathcal{I}-d(x,B)-\mathcal{I}-d(x,A)$
\end{lma}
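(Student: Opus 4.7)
The plan is to realize $B \setminus A$ as the ``difference'' in a way analogous to Lemma 2.9, but since Lemma 2.9 requires existence of both $\mathcal{I}$-densities as a hypothesis, I cannot simply apply it to the decomposition $B = A \sqcup (B\setminus A)$. Instead I will run the sequence-level computation directly and use the algebraic properties of $\mathcal{I}$-$\liminf$ and $\mathcal{I}$-$\limsup$ established in Theorem 2.7 and Proposition 2.8.

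Fix a sequence $\{I_k\}_{k \in \mathbb{N}}$ of closed intervals about $x$ with $\mathscr{S}(I_k) \in \mathcal{F}(\mathcal{I})$ and set
\[
x_n = \frac{m(A \cap I_n)}{m(I_n)}, \qquad y_n = \frac{m(B \cap I_n)}{m(I_n)}, \qquad w_n = \frac{m((B\setminus A) \cap I_n)}{m(I_n)}.
\]
Because $A \subset B$, the sets $A \cap I_n$ and $(B\setminus A) \cap I_n$ are disjoint and union to $B \cap I_n$, so $w_n = y_n - x_n$ for every $n \in \mathscr{S}(I_k)$; in particular $\{w_n\}$, $\{x_n\}$, $\{y_n\}$ are bounded sequences (hence $\mathcal{I}$-bounded). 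By hypothesis $\mathcal{I}-\liminf x_n = \mathcal{I}-\limsup x_n = \mathcal{I}-d(x,A)$ and similarly for $y_n$.

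Now combine Theorem 2.7 with Proposition 2.8 applied to $w_n = y_n + (-x_n)$. The upper estimate gives
\[
\mathcal{I}-\limsup w_n \leq \mathcal{I}-\limsup y_n + \mathcal{I}-\limsup(-x_n) = \mathcal{I}-d(x,B) - \mathcal{I}-d(x,A),
\]
while the lower estimate gives
\[
\mathcal{I}-\liminf w_n \geq \mathcal{I}-\liminf y_n + \mathcal{I}-\liminf(-x_n) = \mathcal{I}-d(x,B) - \mathcal{I}-d(x,A).
\]
Together with $\mathcal{I}-\liminf w_n \leq \mathcal{I}-\limsup w_n$ from Theorem 2.6, these force equality throughout. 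Thus $\mathcal{I}-d(x, B\setminus A)$ exists and equals $\mathcal{I}-d(x,B) - \mathcal{I}-d(x,A)$.

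There is no real obstacle here; the only subtlety is that one cannot obtain the conclusion as a direct corollary of Lemma 2.9 (applied to the disjoint decomposition $B = A \cup (B\setminus A)$), since Lemma 2.9 \emph{assumes} the existence of $\mathcal{I}-d(x, B\setminus A)$, which is exactly what must be established. The workaround is to prove existence by the sandwich argument above, after which the additivity identity falls out automatically.
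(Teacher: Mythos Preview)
Your proof is correct and follows essentially the same route as the paper's own argument: both set $w_n = y_n - x_n$ and use the sub-/superadditivity of $\mathcal{I}$-$\limsup$ and $\mathcal{I}$-$\liminf$ together with the sign-flip identity to sandwich $\mathcal{I}-\liminf w_n$ and $\mathcal{I}-\limsup w_n$ at the common value $\mathcal{I}-d(x,B)-\mathcal{I}-d(x,A)$. One small correction: the identity $\mathcal{I}-\limsup(-x_n)=-\mathcal{I}-\liminf x_n$ that you invoke is Proposition~2.9 in the paper, not Proposition~2.8 (which concerns addition of a constant).
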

\begin{proof}Since $A$ and $B$ are measurable sets, for any sequence $\{I_k\}_{k \in \mathbb{N}}$ of closed intervals about the point $x$ such that $\mathscr{S}(I_k) \in \mathcal{F(\mathcal{I})}$ we have $m((B \setminus A) \cap I_n)=m(B \cap I_n)-m(A \cap I_n)$. Consider $x_n$ and $y_n$ as in previous lemma. Take $p_n=\frac{m((B \setminus A) \cap I_n)}{m(I_n)}$. So, $p_n = y_n - x_n$. It is easy to see $\{p_n\}_{n \in \mathbb{N}}$ is bounded and hence an $\mathcal{I}$-bounded sequence. So, 
\begin{align*}
    \mathcal{I}-d_{-}(x,B \setminus A)  & = \mathcal{I}-\liminf p_n \\
         & = \mathcal{I}-\liminf(y_n - x_n)\\
         & \geq \mathcal{I}-\liminf y_n - \mathcal{I}-\limsup x_n\\
        & = \mathcal{I}-d_{-}(x,B)- \mathcal{I}-d^{-}(x,A)\\
         & = \mathcal{I}-d^{-}(x,B)- \mathcal{I}-d_{-}(x,A)\\
         & = \mathcal{I}-\limsup y_n - \mathcal{I}-\liminf x_n \\
         & \geq \mathcal{I}-\limsup (y_n - x_n)\\
        & = \mathcal{I}-\limsup p_n\\
        & = \mathcal{I}-d^{-}(x,B \setminus A)
\end{align*}
Therefore, $\mathcal{I}-d(x,B \setminus A)$ exists and $\mathcal{I}-d^{-}(x,B \setminus A)=\mathcal{I}-d_{-}(x,B \setminus A)=\mathcal{I}-d(x,B \setminus A)$. So,  $\mathcal{I}-d(x,B \setminus A) \geq \mathcal{I}-d(x,B) - \mathcal{I}-d(x,A) \geq \mathcal{I}-d(x,B \setminus A)$. Hence, $\mathcal{I}-d(x,B \setminus A)= \mathcal{I}-d(x,B)-\mathcal{I}-d(x,A).$
\end{proof}

\begin{thm}
For any measurable set $H$, $\mathcal{I}$-density of $H$ at a point $p$ exists if and only if $\mathcal{I}-d^{-}(p,H)+\mathcal{I}-d^{-}(p,H^{c})=1$.
\end{thm}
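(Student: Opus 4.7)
The plan is to reduce everything to the algebraic identity $x_n+y_n=1$, where $x_n$ and $y_n$ are the density ratios of $H$ and $H^c$ computed on a common sequence of intervals, and then invoke the $\mathcal{I}$-limsup/liminf calculus developed in Propositions 2.8 and 2.9. First I would fix a sequence $\{I_k\}_{k\in\mathbb{N}}$ of closed intervals about $p$ with $\mathscr{S}(I_k)\in\mathcal{F}(\mathcal{I})$ and set $x_n=m(H\cap I_n)/m(I_n)$ and $y_n=m(H^c\cap I_n)/m(I_n)$. Since $H$ and $H^c$ partition $\mathbb{R}$ and $m(I_n)>0$ for $n\in\mathscr{S}(I_k)$, we have $x_n+y_n=1$ for every $n\in\mathscr{S}(I_k)$, hence on all of $\mathbb{N}$ except for a set in $\mathcal{I}$; both sequences take values in $[0,1]$ and are therefore $\mathcal{I}$-bounded.

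The key step is to establish the complementary identity
\[
\mathcal{I}-d^{-}(p,H^c) \;=\; 1-\mathcal{I}-d_{-}(p,H).
\]
Observe first that if two real sequences agree off a set in $\mathcal{I}$, then the sets $B_{\,\cdot\,}$ and $A_{\,\cdot\,}$ of Definition 2.1 associated to them coincide, because each is defined by a condition of the form ``$\{k:\ldots\}\notin\mathcal{I}$''. Consequently $\mathcal{I}-\limsup y_n=\mathcal{I}-\limsup(1-x_n)$. Applying Proposition 2.8(ii) (additive constant) and Proposition 2.9(i) (negation) then yields
\[
\mathcal{I}-\limsup(1-x_n) \;=\; 1+\mathcal{I}-\limsup(-x_n) \;=\; 1-\mathcal{I}-\liminf x_n,
\]
which is exactly the claimed identity.

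Finally, substituting this into the left-hand side of the theorem gives
\[
\mathcal{I}-d^{-}(p,H)+\mathcal{I}-d^{-}(p,H^c) \;=\; 1+\bigl(\mathcal{I}-d^{-}(p,H)-\mathcal{I}-d_{-}(p,H)\bigr),
\]
and by Theorem 2.6 the bracketed difference is non-negative. Hence the sum equals $1$ if and only if $\mathcal{I}-d^{-}(p,H)=\mathcal{I}-d_{-}(p,H)$, which is precisely the condition that $\mathcal{I}-d(p,H)$ exists. I do not anticipate any serious obstacle; the only subtlety is that the identity $x_n+y_n=1$ holds on $\mathscr{S}(I_k)$ rather than on all of $\mathbb{N}$, but because $\mathbb{N}\setminus\mathscr{S}(I_k)\in\mathcal{I}$, this discrepancy is invisible to $\mathcal{I}-\limsup$ and $\mathcal{I}-\liminf$.
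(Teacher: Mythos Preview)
Your proposal is correct and follows essentially the same approach as the paper: both arguments rest on the identity $x_n+y_n=1$ for $n\in\mathscr{S}(I_k)$ and then apply Propositions~2.8 and~2.9 to pass between $\mathcal{I}-\limsup$ and $\mathcal{I}-\liminf$ of the complementary ratios. The only difference is organisational: the paper treats the two implications separately, while you first isolate the single identity $\mathcal{I}-d^{-}(p,H^{c})=1-\mathcal{I}-d_{-}(p,H)$ and then read off the biconditional in one stroke via Theorem~2.6, which is a slightly cleaner packaging of the same computation.
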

\begin{proof} Let $\{I_k\}_{k \in \mathbb{N}}$ be a sequence of closed intervals about the point $p$ such that $\mathscr{S}(I_k) \in \mathcal{F(\mathcal{I})}$. Let $x_n=\frac{m(I_n \cap H)}{m(I_n)}$ and $y_n=\frac{m(I_n \cap H^{c})}{m(I_n)}$. Then $x_n + y_n =1$ $\forall n \in \mathscr{S}(I_k)$. Both $\{x_n\}_{n \in \mathbb{N}}$ and $\{y_n\}_{n \in \mathbb{N}}$ are $\mathcal{I}$-bounded sequences.\\
Necessary part: Let $\mathcal{I}$-density of a measurable set $H$ at the point $p$ exists. Now 
\begin{align*}
    \mathcal{I}-d^{-}(p,H) & = \mathcal{I}-d_{-}(p,H) = \mathcal{I}-\liminf x_n  = \mathcal{I}-\liminf (1-y_n)
         \\ & = 1- \mathcal{I}-\limsup y_n  = 1- \mathcal{I}-d^{-}(p,H^{c})
\end{align*}

Sufficient part: Let $\mathcal{I}-d^{-}(p,H)+\mathcal{I}-d^{-}(p,H^{c})=1$. Then,
\begin{align*}
  \mathcal{I}-d^{-}(p,H)  & = 1-\mathcal{I}-d^{-}(p,H^{c})
         = 1- \mathcal{I}-\limsup y_n
          = 1+ \mathcal{I}-\liminf (-y_n)\\
         & = \mathcal{I}-\liminf (1- y_n)
          = \mathcal{I}-\liminf x_n\\
         & = \mathcal{I}-d_{-}(p,H)  
\end{align*}
Hence, $\mathcal{I}$-density of $H$ at $p$ exists.
\end{proof}

\section{Lebesgue $\mathcal{I}-$density theorem}\label{section 3}
Let $H \subset \mathbb{R}$ be a measurable set. Let us denote the set of points of $\mathbb{R}$ at which $H$ has $\mathcal{I}$-density 1 by $\Theta_\mathcal{I}(H)$.

\begin{thm}
For any measurable set $H \subset \mathbb{R}$, $\Theta_\mathcal{I}(H) - H \subset H^{c}- \Theta_\mathcal{I}(H^{c})$ 
\end{thm}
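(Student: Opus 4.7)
The plan is to unpack the definitions and then invoke Theorem 2.11 as the key tool. A point $x \in \Theta_\mathcal{I}(H) \setminus H$ satisfies two conditions: $\mathcal{I}\text{-}d(x,H)=1$, and $x \in H^c$. The second condition immediately gives half of the target containment, namely $x \in H^c$, so the whole task reduces to showing $x \notin \Theta_\mathcal{I}(H^c)$, i.e.\ that either $\mathcal{I}\text{-}d(x,H^c)$ does not exist or it is different from $1$.

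To do this I would fix any sequence $\{J_n\}_{n\in\mathbb{N}}$ of closed intervals about $x$ with $\mathscr{S}(J_n)\in\mathcal{F}(\mathcal{I})$, used to compute densities at $x$. Since $\mathcal{I}\text{-}d(x,H)$ exists, Theorem 2.11 applies and yields
\[
\mathcal{I}\text{-}d^{-}(x,H)+\mathcal{I}\text{-}d^{-}(x,H^{c})=1.
\]
Because $\mathcal{I}\text{-}d(x,H)=1$, the upper $\mathcal{I}$-density $\mathcal{I}\text{-}d^{-}(x,H)$ also equals $1$, and substituting back forces $\mathcal{I}\text{-}d^{-}(x,H^{c})=0$.

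Now I would argue by contradiction: if $x\in\Theta_\mathcal{I}(H^c)$, then $\mathcal{I}\text{-}d(x,H^{c})=1$, which would in particular require $\mathcal{I}\text{-}d^{-}(x,H^{c})=1$, contradicting the previous line. Hence $x\notin\Theta_\mathcal{I}(H^c)$, and combined with $x\in H^c$ this gives $x\in H^{c}\setminus\Theta_\mathcal{I}(H^{c})$, finishing the containment.

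There is no real obstacle here; the statement is a short corollary of Theorem 2.11 combined with the trivial observation that the measurable sets $H$ and $H^c$ partition $\mathbb{R}$. The only thing to be mildly careful about is the phrasing: $x\notin\Theta_\mathcal{I}(H^c)$ covers both the case when $\mathcal{I}\text{-}d(x,H^c)$ fails to exist and the case when it exists but is not $1$, and the argument above covers both uniformly.
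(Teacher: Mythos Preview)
Your proof is correct and follows essentially the same route as the paper: both reduce the containment to showing $\Theta_\mathcal{I}(H)\cap\Theta_\mathcal{I}(H^c)=\varnothing$, and both obtain the contradiction from the identity $\mathcal{I}\text{-}d^{-}(p,H)+\mathcal{I}\text{-}d^{-}(p,H^{c})=1$ for a point where the $\mathcal{I}$-density of $H$ exists. The only caveat is a numbering slip: the result you cite is Theorem~2.12 in the paper, not Theorem~2.11.
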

\begin{proof}It is obvious that $\Theta_\mathcal{I}(H) - H \subset H^{c}$. Now we show if  $x \in \Theta_\mathcal{I}(H)$ then $x \notin \Theta_\mathcal{I}(H^{c})$. Suppose if possible, $x \in \Theta_\mathcal{I}(H) \cap \Theta_\mathcal{I}(H^{c})$. Then $\mathcal{I}-d(x,H)=1$ and $\mathcal{I}-d(x,H^{c})=1$. But this leads to a contradiction to theorem 2.12. Therefore, $\Theta_\mathcal{I}(H) \cap \Theta_\mathcal{I}(H^{c})$ is an empty set. Thus, $\Theta_\mathcal{I}(H) - H \subset H^{c}- \Theta_\mathcal{I}(H^{c})$.
\end{proof}

Here we prove an analogue of classical Lebesgue density theorem by the idea presented in \cite{Oxtoby} (Theorem 3.20).
\begin{thm}
For any measurable set $H \subset \mathbb{R},$
\begin{center}
 $m(H \triangle \Theta_\mathcal{I}(H))=0$ 
\end{center}

\end{thm}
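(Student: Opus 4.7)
The plan is to reduce this statement to the classical Lebesgue density theorem (cited from \cite{Oxtoby}, Theorem 3.20) rather than redevelop a Vitali-type covering argument from scratch. Writing $\Phi(H)$ for the set of ordinary Lebesgue density points of $H$, the classical theorem furnishes $m(H\triangle\Phi(H))=0$, so I only need to relate $\Phi(H)$ to $\Theta_\mathcal{I}(H)$ and then invoke Theorem 3.1.

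The heart of the argument will be the inclusion $\Phi(H)\subseteq\Theta_\mathcal{I}(H)$. I fix $x\in\Phi(H)$ and choose any sequence $\{J_n\}_{n\in\mathbb{N}}$ of closed intervals about $x$ with $\mathscr{S}(J_n)\in\mathcal{F}(\mathcal{I})$; set $x_n=m(J_n\cap H)/m(J_n)$. Along the ``good'' indices $n\in\mathscr{S}(J_n)$ one has $0<m(J_n)<1/n$, so $m(J_n)\to 0$ along this index set, and the classical density-point hypothesis at $x$ forces $x_n\to 1$ along $\mathscr{S}(J_n)$ in the ordinary sense. Given $\varepsilon>0$, the bad set $\{n:|x_n-1|\ge\varepsilon\}$ therefore lies in the union of $\mathbb{N}\setminus\mathscr{S}(J_n)$ with a finite set; since $\mathcal{I}$ is admissible (so absorbs finite sets) and $\mathbb{N}\setminus\mathscr{S}(J_n)\in\mathcal{I}$ by assumption, this bad set belongs to $\mathcal{I}$. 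Hence $\mathcal{I}-\lim x_n=1$, which gives $\mathcal{I}-d(x,H)=1$, i.e.\ $x\in\Theta_\mathcal{I}(H)$.

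With this inclusion in hand, the rest is bookkeeping. From $\Phi(H)\subseteq\Theta_\mathcal{I}(H)$ one obtains $m(H\setminus\Theta_\mathcal{I}(H))\le m(H\setminus\Phi(H))=0$. For the other piece, applying Theorem 3.1 together with the same inclusion applied to $H^c$ yields $\Theta_\mathcal{I}(H)\setminus H\subseteq H^c\setminus\Theta_\mathcal{I}(H^c)\subseteq H^c\setminus\Phi(H^c)$, and by the classical theorem applied to $H^c$ the last set has measure zero. Summing the two pieces gives $m(H\triangle\Theta_\mathcal{I}(H))=0$.

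The most delicate point, and the one I would verify carefully, is the passage from ``$x$ is an ordinary density point'' to ``$m(J_n\cap H)/m(J_n)\to 1$ for every sequence of intervals about $x$ with $m(J_n)\to 0$.'' This follows from the $\liminf/\limsup$ descriptions of $d^-,\,d_-$ in the preliminaries by a standard subsequence argument (if the ratios failed to tend to $1$ along some shrinking subsequence, $d_-(x,H)$ would drop below $1$), but it is the single place where the classical Lebesgue density theorem is doing the real analytic work behind the $\mathcal{I}$-version.
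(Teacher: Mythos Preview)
Your proof is correct and takes a genuinely different route from the paper. The paper reproduces the full Vitali covering argument from \cite{Oxtoby} in the $\mathcal{I}$-setting: it defines $C_\mu=\{x\in H:\mathcal{I}-d_-(x,H)<1-\mu\}$, builds a Vitali cover, extracts a disjoint sequence by the standard greedy procedure, and derives a contradiction to $m^\star(C_\mu)>0$. Your argument instead observes that every classical Lebesgue density point of $H$ is automatically an $\mathcal{I}$-density point (this is essentially the content of the paper's Theorem~4.3, and your handling of the general admissible sequence is actually more careful than the paper's proof there), and then invokes the classical Lebesgue density theorem as a black box together with Theorem~3.1. What the paper's approach buys is self-containment: it does not cite the classical theorem and so could, in principle, be read independently. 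What your approach buys is economy and transparency: it makes explicit that the $\mathcal{I}$-version carries no new analytic content beyond the classical theorem, since $\Phi(H)\subseteq\Theta_\mathcal{I}(H)$ forces $H\setminus\Theta_\mathcal{I}(H)\subseteq H\setminus\Phi(H)$ and likewise for the complement. Your identification of the ``delicate point'' is apt, and your justification via the $\liminf$ characterization of $d_-(x,H)$ is sound.
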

\begin{proof}
It is sufficient to show that $H-\Theta_\mathcal{I}(H)$ is a null set, since $\Theta_\mathcal{I}(H)-H \subset H^{c}-\Theta_\mathcal{I}(H^{c})$ and $H^{c}$ is measurable. Let us assume without any loss of generality $H$ is bounded because if $H$ is unbounded it can be written as $\bigcup_{n=1}^{\infty} H_{n}$ where each $H_n$ is bounded. For $\mu >0$ let us take
\begin{align}
    C_{\mu}=\{x \in H: \mathcal{I}-d_{-}(x,H)<1-\mu\}.
\end{align} Then, $\mu_1 < \mu_2 \implies C_{\mu_2} \subset C_{\mu_1}$ and $H-\Theta_\mathcal{I}(H)=\bigcup_{\mu>0}C_{\mu}$. We are to show that $m^{\star}(C_\mu)=0$. Let, if possible $m^{\star}(C_\mu)> 0$ for some $\mu >0$. Since $C_\mu \subset H$ and $H$ is bounded so $C_\mu$ is bounded. Then there exists a bounded open set $G \supset C_\mu$ such that $(1-\mu)m(G)<m^{\star}(C_\mu)$. Let $\mathcal{F}$ be the family of all closed intervals $I$ such that $I \subset G$ and $m(H \cap I)\leq (1-\mu)m(I)$. Then for each $x \in C_{\mu}$ $\exists J \in \mathcal{F}$ such that $x \in J$ and $m(J)< \epsilon$ for arbitrary small $\epsilon >0$. So, $C_{\mu}$ is covered by $\mathcal{F}$ in the sense of Vitali. For any disjoint sequence $\{I_k\}_{k\in \mathbb{N}}$ of elements of $\mathcal{F}$, 
\begin{align*}
        m^{\star}(C_\mu \cap (\bigcup_{k \in \mathbb{N}}I_k)) & =
        m^{\star}(\bigcup_{k \in \mathbb{N}}(C_\mu \cap I_k) \leq \sum_{k \in \mathbb{N}} m^{\star}(C_\mu \cap I_k) \leq \sum_{k \in \mathbb{N}} m(H \cap I_k) \\ & \leq (1-\mu)\sum_{k \in \mathbb{N}} m(I_k) <  (1-\mu) m(G) < m^{\star}(C_{\mu})
\end{align*}

\begin{align}
    \therefore m^{\star}(C_\mu \setminus \bigcup_{k \in \mathbb{N}} I_k)>0 
\end{align}
We construct a disjoint sequence $\{J_k\}_{k \in \mathbb{N}}$ of elements in $\mathcal{F}$ as follows. Let $\alpha_0 = \sup_{J \in \mathcal{F}} m(J)$. Choose $J_1 \in \mathcal{F}$ such that $m(J_1)>\frac{\alpha_0}{2}$. Take $\mathcal{F}_1=\{J \in \mathcal{F}: J \cap J_1 = \phi\}$. Then $\mathcal{F}_1$ is nonempty since $m^{\star}(C_\mu \setminus J_1)>0$, by 3.2. Let $\alpha_1 = \sup_{J \in \mathcal{F}_1} m(J)$. Choose $J_2 \in \mathcal{F}_1$ such that $m(J_2)>\frac{\alpha_1}{2}$. Take $\mathcal{F}_2=\{J \in \mathcal{F}_1: J \cap J_2 = \phi\}$. Then $\mathcal{F}_2$ is nonempty, by 3.2. Likewise we choose $J_1, J_2,\dots, J_n$. By induction let us take $\mathcal{F}_n =\{J \in \mathcal{F}_{n-1}:J \cap J_n = \phi\}$. Then $\mathcal{F}_n$ is nonempty, by 3.2. Let $\alpha_{n}=\sup\{m(J): J \in \mathcal{F}_n\}$. Choose $J_{n+1} \in \mathcal{F}_n$ such that $m(J_{n+1})>\frac{\alpha_n}{2}$. Take $B=C_{\mu} \setminus \bigcup_{k \in \mathbb{N}} J_k$. Then, by 3.2, $m^{\star}(B)>0$. Since $J_k \subset G$ $\forall k \in \mathbb{N}$, it follows that $\bigcup_{k \in \mathbb{N}} J_k \subset G$. Thus $\sum_{k=1}^{\infty} m(J_k) \leq m(G) < \infty$. Therefore, $\exists$ $n_0 \in \mathbb{N}$ such that $\sum_{k=n_0 +1}^{\infty} m(J_k) < \frac{m^{\star}(B)}{4}$. For $k>n_0$ let $Q_k$ denote the interval concentric with $J_k$ such that $m(Q_k)=4m(J_k)$. Now, $\sum_{k=n_0 +1}^{\infty} m(Q_k)=4\sum_{k=n_0 +1}^{\infty} m(J_k) < m^{\star}(B)$. So, the family of intervals $\{Q_k\}_{k>n_0}$ does not cover $B$. 

Let us take $b \in B \setminus \bigcup_{k=n_0+1}^{\infty}Q_k$. Then, $b \in C_\mu \setminus \bigcup_{k=1}^{n_0}J_k$. Since, $\mathcal{F}$ is a Vitali cover of $C_\mu$, $\exists$ an interval $J \in \mathcal{F}_{n_0}$ such that $b \in J$ and $b$ is the center of $J$. Clearly for some $k>n_0$,  $J \cap J_k \neq \phi$. Because if $J \cap J_k = \phi$ $\forall k > n_0$  then, since $J \in \mathcal{F}_{n_0}$, $J \cap J_k = \phi$ for $k=1,2, \dots, n_0$. Hence, $J \cap J_k = \phi$ $\forall k \in \mathbb{N}$. Thus $J \in \mathcal{F}_n$ $\forall n \in \mathbb{N}$ which implies that $m(J) \leq \alpha_{n} < 2 m(J_{n+1}) \forall n \in \mathbb{N}$.
Again, since $\sum_{k=1}^{\infty} m(J_k) \leq m(G) < \infty$ given any $\epsilon >0$ $\exists k_0 \in \mathbb{N}$ such that $\sum_{k=k_0}^{\infty} m(J_k)< \epsilon$. But, $\sum_{k=k_0}^{\infty} m(J_k)> \sum_{k=k_0}^{\infty} (\frac{\alpha_{k-1}}{2})$. So we get a contradiction.

So, let $k_0$ be the least positive integer for which $J \cap J_{k_0} \neq \phi$. Then, $k_0>n_0$ and $J \in \mathcal{F}_{k_{0}-1}$. Therefore, $m(J) \leq \alpha_{k_0 -1}< 2 m(J_{k_0})= \frac{m(Q_{k_0})}{2}$. Now, for $b \in J$ and $J \cap J_{k_{0}} \neq \phi$ we have the following two cases. 
\begin{enumerate}
    \item if $b \in J_{k_{0}}$ then $b \in Q_{k_{0}}$
    \item if $b \notin J_{k_{0}}$ then also we claim $b \in  Q_{k_{0}}$. \\
    Since $b$ is the center of $J$ let us take $J=[b-\frac{m(J)}{2},b+\frac{m(J)}{2}]$. Let $x_{k_0}$ be the center of $J_{k_{0}}$. Then take $J_{k_{0}}=[x_{k_0}-\frac{m(J_{k_0})}{2},x_{k_0}
    +\frac{m(J_{k_0})}{2}]$. \\
    
    Consequently,  $Q_{k_{0}}=[x_{k_0}-2m(J_{k_0}),x_{k_0}
    +2m(J_{k_0})]$. 
    
    Let $x \in J \cap J_{k_0}$.Then, $|b-x| \leq \frac{m(J)}{2}$ and $|x-x_{k_0}| \leq \frac{m(J_{k_0})}{2}$. Hence,
    \begin{align*}
        |b-x_{k_0}| \leq |b-x|+|x-x_{k_0}| \leq \frac{m(J)}{2}+ \frac{m(J_{k_0})}{2}
         < m(J_{k_0})+\frac{m(J_{k_0})}{2}
         = \frac{3}{2} m(J_{k_0})
         < 2m(J_{k_0})
    \end{align*}
\end{enumerate}

Hence, $b \in Q_{k_0}$ which implies that $b \in \bigcup_{k=n_0 +1}^{\infty} Q_k$. This leads to a contradiction to our choice of $b$ in $B \setminus \bigcup_{k=n_0 +1}^{\infty} Q_k$. So, $m^{\star}(C_\mu)=0$ for each $\mu >0$. Therefore, $m(H-\Theta_{\mathcal{I}}(H))=0$.
\end{proof}
 
The statement of this theorem may also be read as almost all points of an arbitrary measurable set $H$ are points of $\mathcal{I}$-density for $H$.

\section{$\mathcal{I}$-density topology}\label{section 4}
\begin{dfn}
A measurable set $E \subset \mathbb{R}$ is $\mathcal{I}-d$ open iff  $\mathcal{I}-d_-(x,E)=1$ $\forall x \in E$
\end{dfn}
Let us take the collection $\mathfrak{T}_\mathcal{I}=\{A \subset \mathbb{R}:$ $A$ is $\mathcal{I}-d$ open $\}$

\begin{thm}
The collection $\mathfrak{T}_\mathcal{I}$ is a topology on $\mathbb{R}$.
\end{thm}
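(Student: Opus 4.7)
The plan is to verify the three topology axioms. Trivially $\emptyset \in \mathfrak{T}_\mathcal{I}$, and $\mathbb{R} \in \mathfrak{T}_\mathcal{I}$ since for any $x$ and any sequence $\{J_n\}$ of closed intervals about $x$ with $\mathscr{S}(J_n) \in \mathcal{F}(\mathcal{I})$, the ratios $x_n = m(J_n \cap \mathbb{R})/m(J_n)$ equal $1$ on $\mathscr{S}(J_n)$, so $\{k : x_k < 1\} \in \mathcal{I}$ and hence $\mathcal{I}-d_-(x, \mathbb{R}) = 1$.

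For finite intersections, take $A, B \in \mathfrak{T}_\mathcal{I}$ and $x \in A \cap B$, and along any admissible sequence $\{J_n\}$ form the ratios $x_n, y_n, z_n$ for $A$, $B$, $A \cap B$. The inclusion-exclusion bound $m(J_n) \geq m(J_n \cap A) + m(J_n \cap B) - m(J_n \cap A \cap B)$ gives $1 + z_n \geq x_n + y_n$. Using the monotonicity of $\mathcal{I}$-liminf, which is immediate from the definition of $A_x$, together with Proposition 2.8 to absorb the constant and Theorem 2.7(ii) for super-additivity, I get $1 + \mathcal{I}-d_-(x, A \cap B) \geq \mathcal{I}-d_-(x, A) + \mathcal{I}-d_-(x, B) = 2$; the bound $\mathcal{I}-d_-(x, A \cap B) \leq 1$ is automatic since $z_n \leq 1$. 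Induction handles any finite intersection, and measurability is routine.

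The main obstacle is closure under arbitrary unions $U = \bigcup_\alpha A_\alpha$, because measurability of $U$ cannot be read off termwise. I plan to mimic the classical argument for the ordinary density topology. For each $n \in \mathbb{N}$, set $r_n = \sup\{m(V) : V \in \mathcal{L},\ V \subset U \cap (-n, n)\}$, pick measurable $V_{n,k} \subset U \cap (-n, n)$ with $m(V_{n,k}) \geq r_n - 1/k$, and put $V_0 = \bigcup_{n,k} V_{n,k} \in \mathcal{L}$. Then $V_0 \subset U$, and for every $\alpha$ the measurable set $(A_\alpha \cup V_0) \cap (-n, n) \subset U \cap (-n, n)$ has measure at most $r_n = m(V_0 \cap (-n, n))$, which forces $m(A_\alpha \setminus V_0) = 0$.

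For $x \in U \setminus V_0$, choose $\alpha$ with $x \in A_\alpha$. Nullity of $A_\alpha \setminus V_0$ gives $m(J_n \cap A_\alpha) = m(J_n \cap A_\alpha \cap V_0) \leq m(J_n \cap V_0)$ along any admissible $\{J_n\}$, so monotonicity of $\mathcal{I}$-liminf yields $\mathcal{I}-d_-(x, V_0) \geq \mathcal{I}-d_-(x, A_\alpha) = 1$, i.e., $x \in \Theta_\mathcal{I}(V_0)$. By Theorem 3.2 applied to the measurable set $V_0$, $m(\Theta_\mathcal{I}(V_0) \setminus V_0) = 0$; since $U \setminus V_0 \subset \Theta_\mathcal{I}(V_0) \setminus V_0$, the set $U \setminus V_0$ is a null measurable set and $U \in \mathcal{L}$. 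Finally, for $x \in U$ choose $\alpha$ with $x \in A_\alpha$ and use $A_\alpha \subset U$ together with monotonicity to get $\mathcal{I}-d_-(x, U) \geq \mathcal{I}-d_-(x, A_\alpha) = 1$, so $U \in \mathfrak{T}_\mathcal{I}$. The crux is trapping $U$ between $V_0$ and $\Theta_\mathcal{I}(V_0)$, after which Theorem 3.2 forces measurability.
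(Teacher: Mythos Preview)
Your proof is correct, and for the empty set, the whole line, and finite intersections it matches the paper's argument essentially line for line. The genuine divergence is in the arbitrary-union step. The paper argues measurability of $U=\bigcup_\alpha A_\alpha$ by observing that each point of $U$ lies in a measurable subset $A_\alpha\cap I_k^x$ of positive relative measure and then invoking the Vitali Covering Theorem directly to produce a measurable $G\subset U$ with $m^\star(U\setminus G)$ small. You instead build a maximal measurable kernel $V_0\subset U$ via inner measure, show each $A_\alpha$ differs from $V_0$ by a null set, and then appeal to the Lebesgue $\mathcal{I}$-density theorem (Theorem~3.2) to trap $U\setminus V_0$ inside the null set $\Theta_\mathcal{I}(V_0)\setminus V_0$, forcing $U\in\mathcal{L}$ by completeness. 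Your route is the classical one for the ordinary density topology and is arguably cleaner and more self-contained within the paper's framework; the only cost is that it leans on Theorem~3.2, whose proof itself uses a Vitali argument, so the dependence on Vitali is still present but pushed upstream. The paper's approach is more direct in principle but, as written, the Vitali step is somewhat compressed.
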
 
\begin{proof}By voidness $\phi \in \mathfrak{T}_\mathcal{I}$. Since $\mathbb{R} \in \mathcal{L}$ so for $E=\mathbb{R}$ and any $r\in \mathbb{R}$ let $\{I_k\}_{k \in \mathbb{N}}$ be a sequence of closed intervals about $r$ such that $\mathscr{S}(I_k)\in \mathcal{F}(\mathcal{I})$. It is clear that $\mathbb{R} \cap I_k =I_k$ for all $k$. Therefore $x_k=\frac{m(\mathbb{R} \cap I_k)}{m(I_k)}=1$ for all $k \in \mathbb{N}$. Then
\begin{equation*}
    A_x =\{a\in \mathbb{R} : \{k:x_k < a\}\notin \mathcal{I}\}=(1,\infty)
\end{equation*}
Thus, $\mathcal{I}-d_-(r,\mathbb{R})=inf A_x=1 \   \forall r \in \mathbb{R}$. Therefore, $\mathbb{R}\in \mathfrak{T}_\mathcal{I}$. 

Let $\Lambda$ be an arbitrary indexing set and $\{A_\alpha\}_{\alpha \in \Lambda}$ be a collection of sets in $\mathfrak{T}_\mathcal{I}$. We are to show, $\bigcup_{\alpha \in \Lambda}A_\alpha \in \mathfrak{T}_\mathcal{I}$. Let us take $A=\bigcup_{\alpha \in \Lambda}A_\alpha$. Clearly $A_\alpha$ is measurable and $\mathcal{I}-d$ open for each $\alpha \in \Lambda$. First we have to show,  $\bigcup_{\alpha \in \Lambda}A_\alpha$ is measurable. 

Let $x \in A$ so $x \in A_{\alpha}$ for some $\alpha \in \Lambda$. Since $A_\alpha$ is $\mathcal{I}-d$ open so $\mathcal{I}-d(x,A_{\alpha})=1$. Therefore, there exists a sequence $\{I_{n}^{x}\}$ of closed interval about $x$ such that $\mathscr{S}(I_{n}^{x}) \in \mathcal{F}(\mathcal{I})$. Hence $\mathcal{I}-\lim \frac{m(A_\alpha \cap I_{n}^{x})}{m(I_{n}^{x})}=1$. It means that for $\epsilon >0$ there exists $n_0 \in \mathbb{N}$ such that $\forall$ $n \in \mathscr{S}(I_{n}^{x})$ and $n>n_0$ we have 
\begin{equation*}
    1-\epsilon<\frac{m(A_\alpha \cap I_{n}^{x})}{m(I_{n}^{x})}<1+\epsilon
\end{equation*}
So for some suitable $k$ we have $\frac{m(A_\alpha \cap I_{k}^{x})}{m(I_{k}^{x})}>1-\epsilon$. Since $A_\alpha$ is measurable so $A_\alpha \cap I_{k}^{x}$ is measurable subset of $A$. If $A$ is bounded, by Vitali Covering Theorem for $\mathbb{R}$, $A$ contains a measurable set $G$ such that $m^{\star}(A \setminus G)<\epsilon$ $m(G)$. Therefore, $A$ is measurable. If, $A$ is unbounded, then $A$ can be written as $A=\bigcup_{n=1}^{\infty} A_n$ where each $A_n$ is bounded and measurable. Therefore, $A$ is measurable. 

Now we show that for all $x \in A$, \  $\mathcal{I}-d_-(x,A)=1$. Since, $x \in A$, $x \in A_\alpha$ for some $\alpha$. So, $\mathcal{I}-d_-(x,A_\alpha)=1$. Since, $\mathcal{I}-d_-(x,A) \geq \mathcal{I}-d_-(x,A_\alpha)=1$. Therefore, $\mathcal{I}-d_-(x,A)=1$  $\forall x \in A$. Hence, $A=\bigcup_{\alpha \in \Lambda}A_\alpha \in \mathfrak{T}_\mathcal{I}$. 

Finally let $A,B \in \mathfrak{T}_\mathcal{I}$ we show $A \cap B \in \mathfrak{T}_\mathcal{I}$. Since both $A,B$ are measurable, $A \cap B$ is measurable. Now, for any $x \in A \cap B$ we show that $\mathcal{I}-d_-(x,A \cap  B)=1$. It is sufficient to show that $\mathcal{I}-d_-(x,A \cap  B)\geq 1$ $\forall x \in A \cap B$. Let $\{I_k\}_{k \in \mathbb{N}}$ be a sequence of intervals about a point $x$ such that $\mathscr{S}(I_k)\in \mathcal{F}(\mathcal{I})$. Then let us define $a_n=\frac{m(A \cap I_n)}{m(I_n)}$, $b_n=\frac{m(B \cap I_n)}{m(I_n)}$ and $p_n=\frac{m(A \cap B \cap I_n)}{m(I_n)}$. Then 
\begin{equation*}
    \frac{m(A \cap I_n)}{m(I_n)}+\frac{m(B \cap I_n)}{m(I_n)}\leq 1+\frac{m(A \cap B \cap I_n)}{m(I_n)}
\end{equation*} Then $a_n+b_n \leq 1+p_n$. Taking $\mathcal{I}-\liminf$ on both sides we have
\begin{align*}
    \mathcal{I}-d_-(x,A)+\mathcal{I}-d_-(x,B)
         \leq \mathcal{I}-\liminf \{a_n+b_n\} & \leq \mathcal{I}-\liminf \{1+p_n\}\\
        & = 1+ \mathcal{I}-\liminf p_n \\ & = 1+\mathcal{I}-d_-(x,A \cap B)
\end{align*}
Now since $A,B \in \mathfrak{T}_\mathcal{I}$ we have $\mathcal{I}-d_-(x,A \cap B) \geq 1$. So, $\mathfrak{T}_\mathcal{I}$ is a topology on $\mathbb{R}$.
\end{proof}

The topology $\mathfrak{T}_\mathcal{I}$ is called the $\mathcal{I}$-density topology on $\mathbb{R}$ and the pair $(\mathbb{R},\mathfrak{T}_\mathcal{I})$ is the corresponding topological space.

\begin{thm}
The family $\mathfrak{T}_{\mathcal{I}}$ is a topology on the real line finer than the density topology $\mathfrak{T}_d$.
\end{thm}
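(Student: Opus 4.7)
Since Theorem 4.2 already establishes that $\mathfrak{T}_\mathcal{I}$ is a topology on $\mathbb{R}$, the only new content is the comparison $\mathfrak{T}_d\subseteq\mathfrak{T}_\mathcal{I}$. The plan is therefore to pick an arbitrary $E\in\mathfrak{T}_d$ and show $\mathcal{I}\text{-}d_-(x,E)=1$ for every $x\in E$, which by Definition 4.1 places $E$ in $\mathfrak{T}_\mathcal{I}$.

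Fix $x\in E$. By hypothesis $E\in\mathcal{L}$ and $d(x,E)=1$, i.e.\ both $d^-(x,E)$ and $d_-(x,E)$ equal $1$. From White's definition, $d_-(x,E)=1$ means that for every $\epsilon>0$ there exists $n_0\in\mathbb{N}$ such that
\[
\frac{m(E\cap I)}{m(I)}>1-\epsilon
\]
for every closed interval $I\ni x$ with $0<m(I)<\tfrac{1}{n_0}$. This is the quantitative fact I will feed into the $\mathcal{I}$-density calculation.

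Now let $\{J_n\}_{n\in\mathbb{N}}$ be any sequence of closed intervals about $x$ with $\mathscr{S}(J_n)\in\mathcal{F}(\mathcal{I})$, and set $x_n=m(J_n\cap E)/m(J_n)$. The key observation is: if $n\in\mathscr{S}(J_n)$ and $n\geq n_0$, then $0<m(J_n)<\tfrac{1}{n}\leq\tfrac{1}{n_0}$, so $J_n$ is admissible in the classical density estimate above, giving $x_n>1-\epsilon$. Consequently
\[
\{n\in\mathbb{N}:x_n\leq 1-\epsilon\}\;\subseteq\;\{1,2,\dots,n_0-1\}\,\cup\,\bigl(\mathbb{N}\setminus\mathscr{S}(J_n)\bigr).
\]
The first set on the right is finite, hence in $\mathcal{I}$ by admissibility, and the second set is in $\mathcal{I}$ because $\mathscr{S}(J_n)\in\mathcal{F}(\mathcal{I})$. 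Therefore $\{n:x_n<1-\epsilon\}\in\mathcal{I}$ for every $\epsilon>0$, which means $1-\epsilon\notin A_x$ for every $\epsilon>0$. Hence $A_x\subseteq[1,\infty)$ and $\mathcal{I}\text{-}d_-(x,E)=\inf A_x\geq 1$. Since $x_n\leq 1$ always forces $\mathcal{I}\text{-}d_-(x,E)\leq\mathcal{I}\text{-}d^-(x,E)\leq 1$, we conclude $\mathcal{I}\text{-}d_-(x,E)=1$.

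As $x\in E$ was arbitrary, $E\in\mathfrak{T}_\mathcal{I}$, and this completes the inclusion $\mathfrak{T}_d\subseteq\mathfrak{T}_\mathcal{I}$. There is no serious obstacle: the entire argument is a bookkeeping translation between the classical $\varepsilon$-$n_0$ formulation of density and the $\mathcal{I}$-convergence formulation, and it relies only on (i) admissibility of $\mathcal{I}$ to absorb the finite exceptional set $\{1,\dots,n_0-1\}$ and (ii) $\mathscr{S}(J_n)\in\mathcal{F}(\mathcal{I})$ to absorb the $n$'s for which $J_n$ is too big to satisfy the classical size constraint. The Example already displayed in Section 2 witnesses that the inclusion is in fact strict, though the statement only asserts \emph{finer}.
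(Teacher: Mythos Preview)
Your proof is correct and follows essentially the same approach as the paper: reduce to the classical $\epsilon$-$n_0$ estimate for $d_-(x,E)=1$ and then use admissibility of $\mathcal{I}$ to absorb the finitely many bad indices. Your version is in fact more careful than the paper's, which only explicitly treats sequences with $\mathscr{S}(I_n)=\mathbb{N}$, whereas you correctly handle arbitrary $\{J_n\}$ with $\mathscr{S}(J_n)\in\mathcal{F}(\mathcal{I})$ by also absorbing $\mathbb{N}\setminus\mathscr{S}(J_n)$ into the ideal.
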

\begin{proof} For given any set $E$ in $\mathbb{R}$ if $x$ is a density point of $E$ then $d(x,E)=1$. So for any sequence of intervals $\{I_n\}_{n \in \mathbb{N}}$ about $x$ such that $0<m(I_n)<\frac{1}{n}$ for all $n \in \mathbb{N}$ we have $\lim_{n \rightarrow{\infty}} \frac{m(E \cap I_n)}{m(I_n)}=1$. Since $\mathcal{I}$ is an admissible ideal so $\mathcal{I}-\lim_{n \rightarrow{\infty}} \frac{m(E \cap I_n)}{m(I_n)}=1$ and $\mathscr{S}(I_n)=\mathbb{N}\in \mathcal{F}(\mathcal{I})$. Thus, $x$ is an $\mathcal{I}$-density point of $E$. 

Now for any $U \in \mathfrak{T}_d$ any point $x$ in $U$ is a density point of $U$. Thus any point of $U$ is an $\mathcal{I}$-density point of $U$. So $U$ is $\mathcal{I}-d$ open and $U \in \mathfrak{T}_{\mathcal{I}}$. Hence $\mathfrak{T}_d \subset \mathfrak{T}_{\mathcal{I}}$.
\end{proof}

\begin{rmrk}
It is shown in \cite{Density topologies} $\mathfrak{T}_d$ is finer than the natural topology on the real line. The inclusion $\mathfrak{T}_d \subset \mathfrak{T}_{\mathcal{I}}$ implies $\mathfrak{T}_{\mathcal{I}}$ is finer than the natural topology on the real line. However we give an alternative proof that $\mathcal{I}$-density topology is finer than usual topology on $\mathbb{R}$.
\end{rmrk}

\begin{thm}
The family $\mathfrak{T}_{\mathcal{I}}$ is a topology on the real line finer than the natural topology $\mathfrak{T}_U$.
\end{thm}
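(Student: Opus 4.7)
The plan is to show directly that every natural-topology-open set $U \subseteq \mathbb{R}$ is $\mathcal{I}\textrm{-}d$ open, i.e.\ $U$ is Lebesgue measurable and $\mathcal{I}\textrm{-}d_-(x,U)=1$ for every $x \in U$. Measurability is automatic. For the density condition, I would fix $x \in U$, pick $\delta>0$ with $(x-\delta,x+\delta)\subset U$, and then analyze an arbitrary sequence $\{I_n\}_{n\in\mathbb{N}}$ of closed intervals about $x$ satisfying $\mathscr{S}(I_n)\in \mathcal{F}(\mathcal{I})$.

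First I would observe that any closed interval $I_n$ containing $x$ with $m(I_n)<\delta$ lies inside $(x-\delta,x+\delta)\subset U$, so $x_n := m(I_n\cap U)/m(I_n)=1$ for all such $n$. Choosing $N_0$ with $1/N_0<\delta$, the defining condition $0<m(I_n)<1/n$ then forces $x_n=1$ whenever $n\in\mathscr{S}(I_n)$ and $n\geq N_0$. Since $\mathscr{S}(I_n)\in\mathcal{F}(\mathcal{I})$ and $\{1,2,\dots,N_0-1\}\in\mathcal{I}$ (by admissibility of $\mathcal{I}$), the set $\mathscr{S}(I_n)\cap\{n\geq N_0\}$ again belongs to $\mathcal{F}(\mathcal{I})$, and hence $\{n:x_n=1\}\in\mathcal{F}(\mathcal{I})$.

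From here, computing $\mathcal{I}\textrm{-}d_-(x,U)=\inf A_x$ is routine: for any $a\leq 1$, the set $\{k:x_k<a\}$ is contained in $\mathbb{N}\setminus\{k:x_k=1\}\in\mathcal{I}$, so $a\notin A_x$; for any $a>1$, the set $\{k:x_k<a\}$ contains $\{k:x_k=1\}\in\mathcal{F}(\mathcal{I})$ and so cannot lie in $\mathcal{I}$ (otherwise $\mathbb{N}\in\mathcal{I}$, contradicting non-triviality), so $a\in A_x$. Thus $A_x=(1,\infty)$, which gives $\mathcal{I}\textrm{-}d_-(x,U)=1$, proving $U\in \mathfrak{T}_\mathcal{I}$ and hence $\mathfrak{T}_U\subset \mathfrak{T}_\mathcal{I}$. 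Properness of the inclusion follows from Theorem~4.3 together with the well-known strict inclusion $\mathfrak{T}_U\subsetneq\mathfrak{T}_d$.

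The only point requiring a little care is making sure that the eventual-containment $I_n\subset U$ holds for a set of indices lying in $\mathcal{F}(\mathcal{I})$ rather than just cofinitely often; this is handled by intersecting $\mathscr{S}(I_n)$ with a cofinite set and using admissibility, which is the main (mild) obstacle in the argument.
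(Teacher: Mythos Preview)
Your proposal is correct and follows essentially the same approach as the paper's own proof: both fix a point $x$ in the open set, use openness to obtain a neighbourhood of $x$ inside $U$, and then argue that for all $n$ in $\mathscr{S}(I_n)$ beyond some index $n_0$ the interval $I_n$ is contained in $U$, whence $\{n:x_n=1\}\supset \mathscr{S}(I_n)\cap(\mathbb{N}\setminus\{1,\dots,n_0\})\in\mathcal{F}(\mathcal{I})$ and $A_x=(1,\infty)$. The only cosmetic difference is that the paper first reduces to the case where $U$ is an open interval, whereas you work directly with a $\delta$-ball; your verification that $A_x=(1,\infty)$ is in fact slightly more carefully spelled out than the paper's.
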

\begin{proof}
Let us take an open set $U$ in $\mathfrak{T}_U$. Since any $\mathfrak{T}_U$-open set in $\mathbb{R}$ can be written as countable union of disjoint open intervals so without any loss of generality let $U$ be an open interval $(a,b)$ where $a,b \in \mathbb{R}$ and $a<b$. We are to prove that $U$ is $\mathcal{I}-d$ open. Clearly $U$ is Lebesgue measurable. Now given any point $p$ in $U$ suppose $\{J_n\}_{n \in \mathbb{N}}$ be a sequence of closed interval about $p$ such that $\mathscr{S}(J_n)\in \mathcal{F}(\mathcal{I})$. Then there exists $n_0 \in \mathbb{N}$ such that for $n>n_0$ and $n \in \mathscr{S}(J_n)$ we have $J_n \subset U$. So for $n>n_0$ and $n \in \mathscr{S}(J_n)$, $x_n=\frac{m(J_n \cap U)}{m(J_n)}=1$. Therefore, $\{k\in \mathbb{N}: x_k=1\}\supset \mathscr{S}(J_n) \cap (\mathbb{N} \setminus \{1,2, \cdots ,n_o\})$. Thus, $\{k\in \mathbb{N}: x_k=1\} \in \mathcal{F}(\mathcal{I})$. So, $A_x=(1,\infty)$ and $\mathcal{I}-d_{-}(p,U)=\inf A_x=1$. Hence, $U$ is $\mathcal{I}-d$ open. Since any set that is open in natural topology $\mathfrak{T}_U$ on $\mathbb{R}$ is also $\mathcal{I}-d$ open so $\mathfrak{T}_{\mathcal{I}}$ is finer than $\mathfrak{T}_U$.
\end{proof}

\begin{dfn}
A set $F \subset \mathbb{R}$ is said to be $\mathcal{I}-d$ closed if $F^c$ is $\mathcal{I}-d$ open.
\end{dfn}

\begin{dfn}
A point $x \in \mathbb{R}$ is a $\mathcal{I}-d$ limit point of a set $E \subset \mathbb{R}$ (not necessarily measurable) if and only if $\mathcal{I}-d^{-}(x,E)>0$ where instead of taking measure $m$ outer measure $m^{*}$ is taken.
\end{dfn}

\begin{thm} In the space $(\mathbb{R}, \mathfrak{T}_\mathcal{I})$ given any Lebesgue measurable set $E \subset \mathbb{R}$, $m(E)=0$ if and only if $E$ is $\mathcal{I}-d$ closed and discrete.
\end{thm}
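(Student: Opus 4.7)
\emph{Proof plan.} I would prove the two implications separately. The forward direction is a direct $\mathcal{I}$-density computation exploiting the fact that a null set contributes nothing to the ratios $m(\cdot\cap J_n)/m(J_n)$, while the reverse direction is a contradiction argument combining the Lebesgue $\mathcal{I}$-density theorem (Theorem~3.2) with the intersection estimate that appeared in the proof of Theorem~4.2.

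For the forward implication, assume $m(E)=0$. To show $E$ is $\mathcal{I}-d$ closed I would verify $E^c\in\mathfrak{T}_\mathcal{I}$: fix $x\in E^c$ and any sequence $\{J_n\}$ of closed intervals about $x$ with $\mathscr{S}(J_n)\in\mathcal{F}(\mathcal{I})$; since $m(E\cap J_n)\leq m(E)=0$, the ratio $m(E^c\cap J_n)/m(J_n)$ equals $1$ for every $n\in\mathscr{S}(J_n)$, so $\mathcal{I}-d_-(x,E^c)=1$. For discreteness of $E$ in the subspace topology, fix $x\in E$ and put $V_x=E^c\cup\{x\}=\mathbb{R}\setminus(E\setminus\{x\})$; since $m(E\setminus\{x\})=0$, the same argument (with $E\setminus\{x\}$ in place of $E$) gives $V_x\in\mathfrak{T}_\mathcal{I}$, and $V_x\cap E=\{x\}$, so each singleton of $E$ is open in the subspace.

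For the reverse implication, assume $E$ is $\mathcal{I}-d$ closed and discrete and suppose for contradiction that $m(E)>0$. By Theorem~3.2 we have $m(E\setminus\Theta_\mathcal{I}(E))=0$, so there exists $x_0\in E$ with $\mathcal{I}-d(x_0,E)=1$. By discreteness choose $U\in\mathfrak{T}_\mathcal{I}$ with $U\cap E=\{x_0\}$; since $x_0\in U\in\mathfrak{T}_\mathcal{I}$, also $\mathcal{I}-d_-(x_0,U)=1$. For any sequence $\{J_n\}$ of closed intervals about $x_0$ with $\mathscr{S}(J_n)\in\mathcal{F}(\mathcal{I})$, the inequality
\begin{equation*}
\frac{m(E\cap J_n)}{m(J_n)}+\frac{m(U\cap J_n)}{m(J_n)}\leq 1+\frac{m(E\cap U\cap J_n)}{m(J_n)}
\end{equation*}
used in the proof of Theorem~4.2, combined with the subadditivity of $\mathcal{I}-\liminf$, yields
\begin{equation*}
\mathcal{I}-d_-(x_0,E\cap U)\geq \mathcal{I}-d_-(x_0,E)+\mathcal{I}-d_-(x_0,U)-1=1.
\end{equation*}
But $E\cap U=\{x_0\}$ and $m(\{x_0\}\cap J_n)=0$ for any nondegenerate closed interval, hence $\mathcal{I}-d(x_0,\{x_0\})=0$, a contradiction.

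The main obstacle is assembling the reverse direction: the key idea is to use the Lebesgue $\mathcal{I}$-density theorem to locate a point $x_0\in E$ of full $\mathcal{I}$-density, then exploit discreteness to produce an $\mathcal{I}-d$ open neighbourhood $U$ that traps only $x_0$ from $E$, and finally invoke the intersection estimate from Theorem~4.2 to squeeze $E\cap U=\{x_0\}$ into having $\mathcal{I}$-density $1$ at $x_0$, which is plainly false. The forward direction is essentially bookkeeping once one notices that a null set vanishes in ratios of the form $m(\cdot\cap J_n)/m(J_n)$.
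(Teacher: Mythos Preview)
Your proof is correct. The forward direction is essentially the same as the paper's, though you phrase it by explicitly exhibiting the open neighbourhoods $E^c$ and $V_x=E^c\cup\{x\}$, while the paper instead shows $\mathcal{I}-d^{-}(p,E)=0$ for every $p\in\mathbb{R}$ and then invokes Definition~4.7 (the density-based notion of $\mathcal{I}-d$ limit point) to conclude that $E$ has no limit points, hence is closed and discrete.

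The reverse direction is where the two arguments genuinely diverge. The paper reads ``closed and discrete'' through Definition~4.7: since $E$ has no $\mathcal{I}-d$ limit points, $\mathcal{I}-d^{-}(x,E)=0$ and therefore $\mathcal{I}-d(x,E)=0$ for every $x\in\mathbb{R}$; the Lebesgue $\mathcal{I}$-density theorem then forces $m(E)=0$ in one line. Your route is purely topological: you never touch Definition~4.7, instead pulling an open isolating neighbourhood $U$ from discreteness and combining $\mathcal{I}-d_-(x_0,E)=1$ with $\mathcal{I}-d_-(x_0,U)=1$ via the intersection inequality from Theorem~4.2 to squeeze $\mathcal{I}-d_-(x_0,\{x_0\})\geq 1$. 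The paper's argument is shorter because Definition~4.7 packages exactly the density information needed; your argument has the advantage of working directly with the standard topological meaning of ``discrete'' and showing, incidentally, that the closedness hypothesis is not actually used in the reverse implication.
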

\begin{proof}Necessity: Let $m(E)=0$. Then for any $p \in \mathbb{R}$ and $\{I_n\}_{n \in \mathbb{N}}$ a sequence of intervals about $p$ such that $\mathscr{S}(I_n)\in \mathcal{F}(\mathcal{I})$. Take $x_n=\frac{m(I_n \cap E)}{m(I_n)}$. Then $x_n=0$ $\forall n \in \mathbb{N}$. So, $B_x =\{b\in \mathbb{R} : \{k:x_k > b\}\notin \mathcal{I}\}=(-\infty,0)$. Thus, $\mathcal{I}-d^{-}(p,E)=sup{B_x}=0$. Hence, $p$ is not a $\mathcal{I}-d$ limit point of $E$. So, $E$ has no $\mathcal{I}-d$ limit points. Therefore, $E$ is $\mathcal{I}-d$ closed and discrete. 

Sufficiency: Let $E$ be $\mathcal{I}-d$ closed and discrete. Then $E$ has no $\mathcal{I}-d$ limit points and so $\mathcal{I}-d^{-}(x,E)=0$ $\forall x \in \mathbb{R}$. Thus
$\mathcal{I}-d(x,E)=0$ $\forall x \in \mathbb{R}$. But, by Lebesgue $\mathcal{I}$-density theorem, $\mathcal{I}-d(x,E)=1$ for almost all $x \in E$. Therefore, $m(E)=0$.
\end{proof}

\begin{rmrk}
Although $\mathbb{Q}$ is neither open nor closed in ($\mathbb{R},\mathfrak{T}_U$) but since $m(\mathbb{Q})=0$ so by Theorem 4.8 it is $\mathcal{I}-d$ closed in $(\mathbb{R},\mathfrak{T}_\mathcal{I})$. The natural question arises that does there exist a subset of $\mathbb{R}$ which is neither $\mathcal{I}-d$ open nor $\mathcal{I}-d$ closed. In the following example we have shown that such sets do exists in $(\mathbb{R},\mathfrak{T}_\mathcal{I})$.
\end{rmrk}

\begin{xmpl}There exists a subset of $\mathbb{R}$ which is neither $\mathcal{I}-d$ open nor $\mathcal{I}-d$ closed. Here we are giving a construction of a collection of such sets in $\mathbb{R}$. Let us take an open interval $I=(x_1,x_2)$ where $x_1,x_2 \in \mathbb{Q}$ and $x_1 < x_2$. Since $I$ is open in $(\mathbb{R},\mathfrak{T}_\mathcal{I})$ it is $\mathcal{I}-d$ open. Now, let $b=\frac{(x_1+x_2)}{2}$. Then, $b$ is the center of $I$ and $b \in \mathbb{Q}$. Take $J=[b-\frac{|x_2-x_1|}{4},b+\frac{|x_2-x_1|}{4}]$. Then $J \subset I$. Define, $I'=I\setminus (J \cap \mathbb{Q}^{c})$. We claim that $I'$ is neither $\mathcal{I}-d$ open nor $\mathcal{I}-d$ closed. Let $\{I_k\}_{k\in \mathbb{N}}$ be a sequence of intervals about $b \in I'$ such that $\mathscr{S}(I_k)\in \mathcal{F}(\mathcal{I})$. Take, $x_k=\frac{m(I_k \cap I')}{m(I_k)}$. For large $k \in \mathscr{S}(I_k)$, $(I_k \cap I') \subset \mathbb{Q} $. Thus $m(I_k \cap I')=0$. Thus, $B_x =\{b\in \mathbb{R} : \{k:x_k > b\}\notin \mathcal{I}\}=(-\infty,0)$. Therefore, $\mathcal{I}-d^{-}(b,I')=\sup B_x =0$. Thus, $\mathcal{I}-d_{-}(b,I')=0$ and so $b$ is not an $\mathcal{I}$-density point of $I'$. Hence, $I'$ is not $\mathcal{I}-d$ open.

Now, to show $I'$ is not $\mathcal{I}-d$ closed we are to show $(I')^c$ is not $\mathcal{I}-d$ open. We see, $(I')^c=(-\infty,x_1] \cup (J \cap \mathbb{Q}^c) \cup [x_2,\infty)$. Let $\{J_k\}_{k \in \mathbb{N}}$ be a sequence of closed intervals about the point $x_1$ where we choose $J_k=[x_1-\frac{1}{2^{k+1}},x_1+\frac{1}{2^{k+1}}]$ $\forall k \in \mathbb{N}$. Take, $z_k=\frac{m(J_k \cap (I')^c)}{m(J_k)}$ where $0<m(J_k)=\frac{1}{2^k}<\frac{1}{k}$ $\forall k$. So $\mathscr{S}(J_k) = \mathbb{N} \in \mathcal{F}(\mathcal{I})$. Then, $m(J_k \cap (I')^c)<m(J_k)$ $\forall k$ implies $z_k<1$ $\forall k$. Then $\exists k_0 \in \mathbb{N}$ such that $z_k=\frac{1}{2}$ $\forall k>k_0$. Thus, if $B_z =\{b\in \mathbb{R} : \{k:z_k > b\}\notin \mathcal{I}\}=(-\infty, \frac{1}{2})$ then $\sup B_z= \mathcal{I}-d^{-}(x_1,(I')^{c})<1$. Therefore, $x_1$ is not an $\mathcal{I}$-density point of $(I')^c$. So, $(I')^c$ is not $\mathcal{I}-d$ open.
\end{xmpl}

\section{$\mathcal{I}$-approximate continuity}\label{section 5}

The notion of approximate continuity introduced by A. Denjoy is connected with the notion of Lebesgue density point. Since classical Lebesgue density point has been generalized to $\mathcal{I}$-density point subsequently in this section we have obtained the notion of  $\mathcal{I}$-approximate continuity.

\begin{dfn} [\cite{BRUCKNER}]
A function $f:\mathbb{R} \to \mathbb{R}$ is called  $\mathcal{I}$-approximately continuous at $x_0 \in \mathbb{R}$ if there exists a set $E_{x_0} \in \mathcal{L}$ such that $\mathcal{I}-d(x_0,E_{x_0})=1$ and $f|_{E_{x_0}}$ is continuous at $x_0$.
\end{dfn}
If $f$ is $\mathcal{I}$-approximately continuous at every point of $\mathbb{R}$ then we simply say $f$ is $\mathcal{I}$-approximately continuous. We use the notation $\mathcal{I}-\mathbb{A} \mathbb{C}$ to denote $\mathcal{I}$-approximately continuous. 

Now we prove the following results with suitable modification of classical proofs.

\begin{thm} If $f,g:\mathbb{R}\rightarrow{\mathbb{R}}$ be $\mathcal{I}-\mathbb{A} \mathbb{C}$ at $x_0$, then the functions $f+g, f\cdot g$ and $a \cdot g$ for any $a \in \mathbb{R}$ are $\mathcal{I}-\mathbb{A} \mathbb{C}$ at $x_0$. If $g(x)\neq0$  for any $x \in (x_0 - \delta, x_0 + \delta)$ where $\delta >0$ then $\frac{1}{g}$ is  $\mathcal{I}-\mathbb{A} \mathbb{C}$ at $x_0$.
\end{thm}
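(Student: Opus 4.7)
The plan is to produce, for each new function, a single measurable witness set having $\mathcal{I}$-density $1$ at $x_0$ on which the relevant algebraic combination is continuous at $x_0$ in the ordinary sense. The key preliminary I would first establish is the intersection lemma: if $A,B\in\mathcal{L}$ with $\mathcal{I}-d(x_0,A)=\mathcal{I}-d(x_0,B)=1$, then $\mathcal{I}-d(x_0,A\cap B)=1$. Given any sequence $\{I_n\}$ of closed intervals about $x_0$ with $\mathscr{S}(I_n)\in\mathcal{F}(\mathcal{I})$, set $a_n=m(A\cap I_n)/m(I_n)$, $b_n=m(B\cap I_n)/m(I_n)$, $p_n=m(A\cap B\cap I_n)/m(I_n)$; these lie in $[0,1]$ and are therefore $\mathcal{I}$-bounded. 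Since $m((A\cup B)\cap I_n)\le m(I_n)$ rearranges to $a_n+b_n\le 1+p_n$ on $\mathscr{S}(I_n)$, applying Theorem 2.7(ii) together with Proposition 2.8 yields $2\le 1+\mathcal{I}-\liminf p_n$, so $\mathcal{I}-d_-(x_0,A\cap B)\ge 1$; combined with the trivial bound $\mathcal{I}-d^-(x_0,A\cap B)\le 1$, the $\mathcal{I}$-density exists and equals $1$. This is essentially the computation already executed inside the proof of Theorem 4.2.

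For $f+g$, $f\cdot g$, and $a\cdot g$: let $E_f,E_g\in\mathcal{L}$ be the witness sets provided by the hypothesis and set $E:=E_f\cap E_g$, so that $\mathcal{I}-d(x_0,E)=1$ by the lemma. Since $f|_{E_f}$ and $g|_{E_g}$ are each continuous at $x_0$, their restrictions to the smaller set $E$ are also continuous at $x_0$, and a routine $\epsilon$-$\delta$ argument then produces continuity at $x_0$ of $(f+g)|_E$ and $(f\cdot g)|_E$. Hence $E$ witnesses $\mathcal{I}$-approximate continuity of $f+g$ and $f\cdot g$ at $x_0$. For the scalar multiple, the single witness $E_g$ already suffices, as $(a\cdot g)|_{E_g}$ is plainly continuous at $x_0$.

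For $1/g$, put $U:=(x_0-\delta,x_0+\delta)$ and $E':=E_g\cap U$. Since $U\in\mathfrak{T}_U$, Theorem 4.5 gives $U\in\mathfrak{T}_\mathcal{I}$ and therefore $\mathcal{I}-d(x_0,U)=1$; the intersection lemma then yields $\mathcal{I}-d(x_0,E')=1$. The nonvanishing hypothesis gives $g(x_0)\ne 0$, and continuity of $g|_{E_g}$ at $x_0$ restricts to $E'$, so continuity of $t\mapsto 1/t$ at $g(x_0)$ and composition give continuity of $(1/g)|_{E'}$ at $x_0$. The only nontrivial step in the entire argument is the intersection lemma, but it is a direct transcription of the estimate already present in Theorem 4.2, so I do not anticipate any essentially new measure-theoretic obstacle beyond checking $\mathcal{I}$-boundedness, which is automatic since the ratios lie in $[0,1]$.
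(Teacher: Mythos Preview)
Your proposal is correct and follows essentially the same route as the paper: establish the intersection lemma (the paper does it via the inclusion--exclusion identity and $\mathcal{I}$-limits, you via the inequality $a_n+b_n\le 1+p_n$ already used in Theorem~4.2), then take $E_f\cap E_g$ as the common witness for $f+g$ and $f\cdot g$, and $E_g\cap(x_0-\delta,x_0+\delta)$ for $1/g$. The only cosmetic difference is that you invoke Theorem~4.5 to certify $\mathcal{I}-d(x_0,U)=1$ for the open interval, whereas the paper simply asserts it.
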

\begin{proof}
At first we show if $\mathcal{I}-d(x_0,A)=1$ and $\mathcal{I}-d(x_0,B)=1$ then $\mathcal{I}-d(x_0,A \cap B)=1$. Let $\{I_k\}_{k \in \mathbb{N}}$ be a sequence of closed intervals about $x_0$ such that $\mathscr{S}(I_k) \in \mathcal{F}(\mathcal{I})$. Since $m(A \cup B)=m(A)+m(B)-m(A \cap B)$. Then for $k \in \mathscr{S}(I_k)$ we have 
\begin{equation*}
    \frac{m((A \cup B)\cap I_k)}{m(I_k)}=\frac{m(A \cap I_k)}{m(I_k)}+\frac{m(B \cap I_k)}{m(I_k)}-\frac{m((A \cap B) \cap I_k)}{m(I_k)}
\end{equation*}
Taking $\mathcal{I}-\lim$ on both sides we obtain
\begin{equation*}
    \mathcal{I}-d(x_0,A \cap B)=\mathcal{I}-d(x_0,A)+\mathcal{I}-d(x_0,B)-\mathcal{I}-d(x_0,A \cup B)=1
\end{equation*}
Now since $f$ and $g$ are $\mathcal{I}-\mathbb{A}\mathbb{C}$ at $x_0$ so there exists two sets $E_f$ and $E_g$ in $\mathbb{R}$ such that $x_0$ is an $\mathcal{I}$-density point of both $E_f$ and $E_g$ and hence $\mathcal{I}-d(x_0, E_f \cap E_g)=1$. Also $f|_{E_f}$ and $g|_{E_g}$ are continuous at $x_0$. So,
\begin{equation*}
    (f+g)|_{E_f \cap E_g}=f|_{E_f \cap E_g}+g|_{E_f \cap E_g}
\end{equation*}
Hence, $(f+g)$ is $\mathcal{I}-\mathbb{A}\mathbb{C}$ at $x_0$.
Again, 
\begin{equation*}
    (f\cdot g)|_{E_f \cap E_g}=f|_{E_f \cap E_g} \cdot g|_{E_f \cap E_g}
\end{equation*}
Hence, $(f \cdot g)$ is $\mathcal{I}-\mathbb{A}\mathbb{C}$ at $x_0$. Similarly for any $a \in \mathbb{R}$,  $(a \cdot f)$ is $\mathcal{I}-\mathbb{A}\mathbb{C}$ at $x_0$.

Moreover, since $g(x)\neq0$  for any $x \in (x_0 - \delta, x_0 + \delta)$ where $\delta >0$, so $g|_{E_g \cap (x_0 - \delta, x_0 + \delta)} \neq 0$ and continuous at $x_0$. Then $(\frac{1}{g})|_{E_g \cap (x_0 - \delta, x_0 + \delta)}$ is continuous at $x_0$ and $x_0$ is an $\mathcal{I}$-density point of $E_g \cap (x_0 - \delta, x_0 + \delta)$. Hence $\frac{1}{g}$ is $\mathcal{I}-\mathbb{A}\mathbb{C}$ at $x_0$.
\end{proof}

\begin{thm}
If $f$ is $\mathcal{I}-\mathbb{A}\mathbb{C}$ at $x_0$ and $g$ is continuous at $f(x_0)$ then $(g \circ f)$ is $\mathcal{I}-\mathbb{A}\mathbb{C}$ at $x_0$.
\end{thm}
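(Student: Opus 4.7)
The approach is to reuse the very same witness set that makes $f$ be $\mathcal{I}$-approximately continuous at $x_0$; no new set-theoretic or density-theoretic construction is needed. By hypothesis there exists $E_{x_0} \in \mathcal{L}$ with $\mathcal{I}-d(x_0, E_{x_0}) = 1$ such that the restriction $f|_{E_{x_0}}$ is continuous at $x_0$. The plan is to show that the restriction $(g\circ f)|_{E_{x_0}}$ is also continuous at $x_0$; then $E_{x_0}$ itself witnesses that $g\circ f$ is $\mathcal{I}$-approximately continuous at $x_0$.

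For the continuity of the restriction I would argue by the standard $\varepsilon$–$\delta$ chaining. Given $\varepsilon > 0$, continuity of $g$ at $f(x_0)$ yields an $\eta > 0$ such that $|y - f(x_0)| < \eta$ implies $|g(y) - g(f(x_0))| < \varepsilon$. Continuity of $f|_{E_{x_0}}$ at $x_0$ then yields a $\delta > 0$ such that for every $x \in E_{x_0}$ with $|x - x_0| < \delta$ one has $|f(x) - f(x_0)| < \eta$, and hence $|g(f(x)) - g(f(x_0))| < \varepsilon$. This shows $(g\circ f)|_{E_{x_0}}$ is continuous at $x_0$.

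Since the density hypothesis $\mathcal{I}-d(x_0, E_{x_0}) = 1$ is transferred verbatim and $E_{x_0}$ is already Lebesgue measurable, the definition of $\mathcal{I}$-approximate continuity is satisfied for $g\circ f$ at $x_0$. There is really no obstacle here: the argument is purely local at $x_0$ and the $\mathcal{I}$-density assertion plays no active role in the continuity step — it is simply inherited from $f$. The only point worth a line of care is that $x_0 \in E_{x_0}$ so that the restricted function is defined at $x_0$; this is implicit in the definition of continuity of $f|_{E_{x_0}}$ at $x_0$.
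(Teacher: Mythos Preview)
Your proposal is correct and follows essentially the same approach as the paper: both take the witness set $E_{x_0}$ (the paper writes $E_f$) for $f$ and observe that $(g\circ f)|_{E_{x_0}} = g\circ (f|_{E_{x_0}})$ is continuous at $x_0$ because a composition of a function continuous at $x_0$ with one continuous at $f(x_0)$ is continuous at $x_0$. Your $\varepsilon$--$\delta$ chaining merely spells out what the paper summarizes in one line, and your remark that $x_0\in E_{x_0}$ is a reasonable point of care that the paper leaves implicit.
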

\begin{proof}
By hypothesis there exists $E_f$ a subset of $\mathbb{R}$ such that $\mathcal{I}-d(x_0, E_f)=1$ and $f|_{E_f}$ is continuous at $x_0$. Now $(g \circ f)|_{E_f}=g \circ f|_{E_f}$. Since composition of two continuous functions is continuous so $(g \circ f)|_{E_f}$ is continuous at $x_0$. Thus, $(g \circ f)$ is $\mathcal{I}-\mathbb{A}\mathbb{C}$ at $x_0$.
\end{proof}

We state here the Lusin's Theorem for our future purpose.
\begin{thm}[\cite{Oxtoby}] A real valued function $f$ on $\mathbb{R}$ is measurable if and only if for each $\epsilon>0$ there exists a set $E$ with $m(E)<\epsilon$ such that the restriction of $f$ to $\mathbb{R} \setminus E$ is continuous.

\end{thm}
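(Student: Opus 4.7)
The plan is to split the biconditional and prove each direction separately, with the forward implication (measurability $\Rightarrow$ continuous restriction on the complement of a small set) being the substantial one.

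\textbf{Sufficiency.} Assume the approximation property holds. For each $n \in \mathbb{N}$ pick $E_n$ with $m(E_n) < 1/n$ and $f|_{\mathbb{R}\setminus E_n}$ continuous. Continuity on the subspace $\mathbb{R}\setminus E_n$ means that for any open $U \subset \mathbb{R}$, $(f|_{\mathbb{R}\setminus E_n})^{-1}(U)$ has the form $V_n \setminus E_n$ with $V_n$ open in $\mathbb{R}$, hence is Lebesgue measurable. Setting $F = \bigcap_n E_n$ gives $m(F) = 0$, and the identity $f^{-1}(U) \setminus F = \bigcup_n (V_n \setminus E_n)$ exhibits $f^{-1}(U) \setminus F$ as Lebesgue measurable. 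Since $m(F) = 0$ the full preimage $f^{-1}(U)$ is Lebesgue measurable, so $f$ is measurable.

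\textbf{Necessity.} I would use the standard three-step plan. First, by $\sigma$-finiteness decompose $\mathbb{R}$ into countably many bounded intervals $A_k$ and, applying the result on each with tolerance $\epsilon/2^k$, reduce to a set $A$ of finite measure. Second, verify the statement for simple functions: if $s = \sum_{i=1}^k a_i \chi_{A_i}$ with the $A_i$ disjoint measurable, use inner regularity of $m$ to choose closed $C_i \subset A_i$ with $m(A_i \setminus C_i) < \epsilon/k$; the $C_i$ are disjoint closed sets, so in the subspace $\bigcup_i C_i$ each $C_i$ is relatively clopen and $s$ is locally constant, hence continuous, while $m(A \setminus \bigcup_i C_i) < \epsilon$. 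Third, for a general measurable $f$, approximate pointwise by simple functions $s_n \to f$ (the usual dyadic construction, with a preliminary truncation handling unboundedness). By Egorov's theorem there is $G \subset A$ with $m(G) < \epsilon/2$ and $s_n \to f$ uniformly on $A \setminus G$; the simple-function step yields $H_n$ with $m(H_n) < \epsilon/2^{n+2}$ and $s_n$ continuous on $A \setminus H_n$. Then $E := G \cup \bigcup_n H_n$ has $m(E) < \epsilon$, every $s_n$ is continuous on $A \setminus E$, and uniform convergence makes $f|_{A \setminus E}$ continuous as a uniform limit of continuous functions.

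\textbf{Main obstacle.} The crux is the interaction between Egorov's theorem, which requires finite measure, and the regularity-based handling of simple functions. The hard part will be bookkeeping the $\epsilon$ budget so that the $\sigma$-finiteness reduction, the sequence of simple-function approximations, and the Egorov exceptional set together contribute less than $\epsilon$. A more subtle point is the sufficiency direction, where one must be careful that continuity on the subspace $\mathbb{R} \setminus E$ (not on all of $\mathbb{R}$) still forces Lebesgue measurability of $f$ globally; this relies on lifting relatively open sets to genuinely open sets in $\mathbb{R}$ together with the fact that the residual set $F = \bigcap_n E_n$ is null.
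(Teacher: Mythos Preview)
The paper does not prove this statement at all: it is quoted as Lusin's Theorem with a citation to Oxtoby and used as a black box in the proof of Theorem~5.5. So there is no ``paper's own proof'' to compare against; your proposal is simply a proof of a classical result the authors import.

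Your argument is the standard one and is essentially correct. The sufficiency direction is clean: since $m(E_n)$ is assumed finite the sets $E_n$ are measurable, and your identity $f^{-1}(U)\setminus F=\bigcup_n\bigl(f^{-1}(U)\cap(\mathbb{R}\setminus E_n)\bigr)$ together with completeness of $m$ gives measurability of $f^{-1}(U)$. The necessity direction via inner regularity for simple functions and Egorov for the passage to the limit is exactly the textbook route, and your $\epsilon$-bookkeeping ($\epsilon/2$ for Egorov, $\epsilon/2^{n+2}$ for the $n$th simple approximant) is fine.

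One point that deserves a sentence of care is the $\sigma$-finite gluing. If you partition $\mathbb{R}$ into disjoint bounded intervals $A_k$ and obtain $f|_{A_k\setminus E_k}$ continuous on each piece, continuity of $f|_{\mathbb{R}\setminus\bigcup_k E_k}$ does not follow automatically at the interface points of the $A_k$. The easy fix is either to throw the (countable, hence null) set of endpoints into $E$, or to arrange that each good set $A_k\setminus E_k$ is closed and the covering $\{A_k\}$ is locally finite with closed members, so that the pasting lemma applies. This is routine, but as written your reduction step glosses over it.
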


\begin{thm}
A function $g:\mathbb{R} \rightarrow{\mathbb{R}}$ is measurable if and only if it is $\mathcal{I}-\mathbb{A}\mathbb{C}$ almost everywhere.
\end{thm}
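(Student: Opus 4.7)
The plan is to prove both implications using Lusin's theorem (Theorem 5.4) together with the Lebesgue $\mathcal{I}$-density theorem (Theorem 3.2). For the forward direction, assuming $g$ is measurable, Lusin provides for each $n \in \mathbb{N}$ a measurable set $F_n$ with $m(\mathbb{R} \setminus F_n) < 1/n$ on which $g|_{F_n}$ is continuous. Theorem 3.2 applied to $F_n$ shows that almost every point of $F_n$ is an $\mathcal{I}$-density point of $F_n$, and at any such $x$ the choice $E_x := F_n$ witnesses that $g$ is $\mathcal{I}-\mathbb{A}\mathbb{C}$ at $x$ in the sense of Definition 5.1. The union over $n$ of $F_n \cap \Theta_\mathcal{I}(F_n)$ has full measure in $\mathbb{R}$.

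For the reverse direction, let $N$ be the null set outside which $g$ is $\mathcal{I}-\mathbb{A}\mathbb{C}$. It suffices to show that $U_\alpha := \{x : g(x) > \alpha\}$ lies in $\mathcal{L}$ for every $\alpha \in \mathbb{R}$. I would fix a measurable hull $H_\alpha$ (with $m(H_\alpha) = m^{\star}(U_\alpha)$) and measurable kernel $K_\alpha$ (with $m(K_\alpha) = m_{\star}(U_\alpha)$), so that $K_\alpha \subset U_\alpha \subset H_\alpha$ and $U_\alpha \in \mathcal{L}$ is equivalent to $m(S) = 0$ for $S := H_\alpha \setminus K_\alpha$. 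Suppose for contradiction that $m(S) > 0$. Applying Theorem 3.2 to $S$ produces the measurable set $S_0 := (S \cap \Theta_\mathcal{I}(S)) \setminus N$ with $m(S_0) = m(S) > 0$, at every point of which $\mathcal{I}-d(x, S) = 1$ holds and $g$ admits an $\mathcal{I}-\mathbb{A}\mathbb{C}$ witness $E_x \in \mathcal{L}$.

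The core is a trichotomy on $g(x)$ for $x \in S_0$. If $g(x) > \alpha$, continuity of $g|_{E_x}$ at $x$ yields a measurable $F := E_x \cap (x - \delta, x + \delta) \subset U_\alpha$ with $\mathcal{I}-d(x, F) = 1$; the kernel property forces $m(F \setminus K_\alpha) = 0$, hence $\mathcal{I}-d(x, K_\alpha) = 1$, and since $K_\alpha$ and $S$ are disjoint subsets of $\mathbb{R}$, Lemma 2.10 would give $\mathcal{I}-d(x, K_\alpha \cup S) = 2$, which is impossible. If $g(x) < \alpha$, the symmetric construction produces $F \subset \{g < \alpha\}$ disjoint from $U_\alpha$; the hull property forces $m(F \cap H_\alpha) = 0$, and combining Lemma 2.11 with Theorem 2.12 then yields $\mathcal{I}-d(x, H_\alpha) = 0$, contradicting the monotone inequality $\mathcal{I}-d(x, H_\alpha) \geq \mathcal{I}-d(x, S) = 1$.

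The main obstacle is the remaining case $g(x) = \alpha$, for which the pointwise $\mathcal{I}-\mathbb{A}\mathbb{C}$ condition produces no immediate contradiction at the individual point $x$. I would resolve it by a global hull argument: the two preceding cases force $S_0 \subset \{x : g(x) = \alpha\}$, so $S_0 \cap U_\alpha = \emptyset$; then $H' := H_\alpha \setminus S_0$ is a measurable set containing $U_\alpha$ with $m(H_\alpha \setminus H') = m(S_0) > 0$, contradicting the minimality characterization of the hull $H_\alpha$. This contradiction forces $m(S) = 0$, so $U_\alpha \in \mathcal{L}$ for every $\alpha$ and $g$ is measurable.
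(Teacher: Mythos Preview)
Your forward direction is essentially the paper's argument: Lusin plus Theorem~3.2, with the cosmetic difference that you take a countable sequence $\epsilon_n = 1/n$ where the paper lets $\epsilon > 0$ be arbitrary.

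Your reverse direction is correct but is organized differently from the paper's. The paper does not take the hull and kernel of the raw level set $U_\alpha$; instead it first restricts to $E_r \cap B$, where $B$ is the set of $\mathcal{I}$-approximate continuity points, and takes an $F_\sigma$ kernel $P$ and $G_\delta$ hull $H$ of \emph{that} set. The payoff of this pre-restriction is that every point $t_0 \in (E_r \cap B) \setminus P$ automatically satisfies $g(t_0) < r$ and already carries a measurable witness $D_{t_0}$ which (after shrinking by continuity) can be taken inside $E_r \cap B$; one then picks such a $t_0$ that is also an $\mathcal{I}$-density point of $H \setminus P$ and shows $m(D_{t_0} \setminus P) > 0$, contradicting the kernel property directly. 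No case split on the value of $g$ is needed, and in particular the boundary case $g(x) = \alpha$ never arises.

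Your version instead keeps the full level set, runs a trichotomy on $g(x)$, and dispatches the boundary case by the separate hull-minimality observation that $S_0 \subset \{g = \alpha\}$ forces $H_\alpha \setminus S_0$ to be a strictly smaller measurable cover of $U_\alpha$. This is a genuinely different (and perfectly valid) way to close the argument; it is a bit longer but makes the role of the level-set boundary explicit, whereas the paper's pre-restriction to $E_r \cap B$ hides that case at the cost of the somewhat less obvious step of choosing $D_t$ inside $E_r \cap B$.
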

\begin{proof}
Necessary part: Let $g$ be measurable. For $\epsilon >0$, by Lusin's Theorem, there exists a continuous function $\psi$ such that $m(\{x:g(x) \neq \psi(x)\})<\epsilon$. Let $E=\{x:g(x) \neq \psi(x)\}$. Since $E$ is measurable so $E^c$ is measurable. By Theorem, 3.2 almost every point of $E^c$ is a point of $\mathcal{I}-$density of $E^c$ and $g|_{E^c}=\psi$ is continuous. So $g$ is $\mathcal{I}-\mathbb{A}\mathbb{C}$ at almost every point of $E^c$. Thus $g$ is $\mathcal{I}-\mathbb{A}\mathbb{C}$ except on $E$ where outer measure of $E$ is less than $\epsilon$. So, $g$ is $\mathcal{I}-\mathbb{A}\mathbb{C}$ almost everywhere, since $\epsilon>0$ is arbitrary.

Sufficient part: Suppose $g$ is $\mathcal{I}-\mathbb{A}\mathbb{C}$ almost everywhere. To show $g$ is measurable. For $r \in \mathbb{R}$ let $E_r=\{x:g(x)<r\}$. It is sufficient to show that $E_r$ is measurable. Without any loss of generality let $E_r$ be uncountable. Let $B=\{x\in \mathbb{R}: g \ \mbox{is} \ \mathcal{I}-\mathbb{A}\mathbb{C} \ \mbox{at} \ x\}$. Then
\begin{equation*}
    E_r=(E_r \cap B) \cup (E_r \setminus B)
\end{equation*}
From hypothesis $m(\mathbb{R}\setminus B)=0$. Since $m$ is a complete measure so $E_r \setminus B \in \mathcal{L}$. It is enough if we can show $E_r \cap B \in \mathcal{L}$. Let $t \in E_r \cap B$. Since $t \in B$ so there exists a set $D_t \in \mathcal{L}$ such that $\mathcal{I}-d(t,D_t)=1$ and $f|_{D_t}$ is continuous at $t$. Without any loss of generality $D_t$ can be chosen inside $E_r \cap B$. Therefore 
\begin{equation*}
    E_r \cap B=\bigcup_{t \in E_r \cap B}D_t
\end{equation*}
If possible let $E_r \cap B$ be not measurable. Then there exists an Euclidean $F_\sigma$ set $P$ and Euclidean $G_\delta$ set $H$ such that $P \subset E_r \cap B \subset H$ and 
\begin{equation*}
    m(P)=m_{\star}(E_r \cap B)<m^{\star}(E_r \cap B)=m(H)
\end{equation*}
Thus $m(H \setminus P)>0$. By theorem 3.2 almost every point of $H \setminus P$ is a point of $\mathcal{I}$-density of $H \setminus P$. Since $m(H \setminus P)=m^{\star}((E_r \cap B) \setminus P)$, so $m^{\star}((E_r \cap B) \setminus P)>0$. There exists $t_0 \in (E_r \cap B) \setminus P \subset H \setminus P$ such that $\mathcal{I}-d(t_0, H \setminus P)=1$. Now $t_0 \in (E_r \cap B)$. So there exists set $D_{t_0} \subset E_r \cap B$ such that $\mathcal{I}-d(t_0, D_{t_0})=1$. We claim that $m(D_{t_0} \setminus P)>0$. For, if possible, let
\begin{equation}
    m(D_{t_0} \setminus P)=0
\end{equation}
Then $\mathcal{I}-d(t_0,D_{t_0} \setminus P)=0$. Now $H=D_{t_0} \cup (H \setminus D_{t_0})$. So, by Theorem 2.12,
\begin{equation}
    \mathcal{I}-d(t_0,H \setminus D_{t_0})=0, \ \mbox{since} \ \mathcal{I}-d(t_0,D_{t_0})=1 
\end{equation}
Here
\begin{equation*}
    H \setminus P=(D_{t_0} \setminus P) \cup ((H \setminus P) \setminus D_{t_0})
\end{equation*}
Now from (5.1) and (5.2) we have 
\begin{equation*}
    \mathcal{I}-d(t_0,H \setminus P)=\mathcal{I}-d(t_0,D_{t_0} \setminus P)+\mathcal{I}-d(t_0,(H \setminus P) \setminus D_{t_0})=0
\end{equation*}
This is a contradiction. Now $m(D_{t_0} \setminus P)>0$ implies $m_{\star}(D_{t_0} \setminus P)>0$. Then $m_{\star}((E_r \cap B) \setminus P)>0$. This contradicts to the fact that $m(P)=m_{\star} (E_{r} \cap B)$. Thus $E_r \cap B \in \mathcal{L}$.
\end{proof}

\begin{dfn}[\cite{natanson}]
The set of all continuous functions defined on interval $I$ is called as the null Baire class of functions. If the function $g(x)$ defined on $I$ is not in the null class but is representable in the form 
\begin{equation}
    g(x)=\lim_{n \rightarrow{\infty}}g_n(x)
\end{equation}
where all the functions $g_n(x)$ are continuous then $g(x)$ is said to be a function of the first Baire class. In general the functions of Baire class $m \in \mathbb{N}$ are functions which are not in any of the preceeding classes but can be represented as the limit of sequence of functions of Baire class $(m-1)$ as in (5.3).

In this way all the classes of functions with finite indices are defined. We denote these classes by $\mathcal{B}_0,\mathcal{B}_1,\dots,\mathcal{B}_m,\dots$
\end{dfn}

\begin{thm}[\cite{natanson}]
Let $I$ be a fixed interval and $g:I\rightarrow{\mathbb{R}}$ be a function of class not greater than $m_1$ and let $\psi$ be a function of class not greater than $m_2$ whose values lie in $I$. Then $(g \circ \psi)$ is a function of class $\leq m_1+m_2$.
\end{thm}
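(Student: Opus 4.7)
The plan is to prove the claim by a nested induction, separating the roles of $m_1$ and $m_2$. First I would establish the special case $m_1=0$, namely that the composition of a continuous function with a Baire class $m_2$ function is itself of class at most $m_2$. Then I would use this to bootstrap to the general statement by induction on $m_1$.

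For the base case, I would induct on $m_2$. When $m_2 = 0$, both $g$ and $\psi$ are continuous so $g \circ \psi$ is continuous, i.e.\ of class $0$. For the inductive step, assume the result for $m_2 - 1$ and let $\psi$ be of class $\leq m_2$. Write $\psi = \lim_{n \to \infty} \psi_n$ where each $\psi_n$ is of class $\leq m_2 - 1$ with values in $I$ (this range condition may require a preliminary truncation of $\psi_n$ against the endpoints of $I$; since truncation is composition with a continuous function, it does not raise the Baire class). Because $g$ is continuous, pointwise convergence $\psi_n \to \psi$ yields $g \circ \psi_n \to g \circ \psi$ pointwise. By the inductive hypothesis each $g \circ \psi_n$ is of class $\leq m_2 - 1$, so $g \circ \psi$ is of class $\leq m_2$.

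For the general case, I would induct on $m_1$, with $m_2$ (and $\psi$ of class $\leq m_2$) arbitrary. The base case $m_1 = 0$ is exactly what was established above. For the inductive step, let $g$ be of class $\leq m_1$ and write $g = \lim_{n \to \infty} g_n$ with each $g_n$ of class $\leq m_1 - 1$. Pointwise convergence $g_n \to g$ on $I$ immediately gives $g_n \circ \psi \to g \circ \psi$ pointwise on the domain of $\psi$. By the inductive hypothesis applied with class index $m_1 - 1$, each $g_n \circ \psi$ is of class $\leq (m_1 - 1) + m_2$. Hence $g \circ \psi$, being the pointwise limit of such functions, is of class $\leq m_1 + m_2$, completing the induction.

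The main obstacle I anticipate is the range condition: when we write $\psi = \lim \psi_n$, there is no a priori guarantee that the approximants $\psi_n$ take values in $I$, which is needed for $g \circ \psi_n$ to be defined. The remedy is to replace $\psi_n$ by $\tau \circ \psi_n$, where $\tau$ is a continuous retraction of $\mathbb{R}$ onto $I$ (for instance, truncation to the closure of $I$); this preserves the Baire class of $\psi_n$ and preserves convergence to $\psi$ since $\psi$ already has values in $I$. Everything else in the argument is a clean pointwise-limit manipulation, so once this technical point is handled the double induction proceeds routinely.
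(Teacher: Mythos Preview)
Your double-induction argument is correct and is the standard proof of this classical fact. Note, however, that the paper does not supply its own proof of this statement: it is quoted from Natanson's book \textit{Theory of Functions of a Real Variable} and is used only as a tool in the proof of Theorem~5.9. So there is no in-paper proof to compare against; your argument simply fills in what the authors leave to the reference, and it follows the same line one finds in Natanson.

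One small remark on the truncation step: your retraction $\tau$ maps onto the closure of $I$, which could include endpoints not belonging to $I$ if $I$ is open or half-open. In that case $g\circ(\tau\circ\psi_n)$ may not be defined. The cleanest fix is to first extend $g$ continuously to the closure of $I$ (which is harmless since composition with a continuous extension does not change $g\circ\psi$ on the range of $\psi$), or to observe that in Natanson's setting $I$ is taken to be closed, so the issue does not arise. With that adjustment the argument is complete.
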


\begin{thm}[\cite{natanson}]
Let $I$ be a fixed interval. Then $g:I\rightarrow{\mathbb{R}}$ is a function of Baire class not greater than the first if and only if for arbitrary $\alpha \in \mathbb{R}$ the sets $C^{\alpha}=\{x:g(x)<\alpha\}$ and $C_{\alpha}=\{x:g(x)>\alpha\}$ are of type Euclidean $F_{\sigma}$.
\end{thm}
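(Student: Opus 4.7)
I would establish the two implications separately; the necessity is routine, while the sufficiency is the substantial content.

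For the necessity, suppose $g \in \mathcal{B}_0 \cup \mathcal{B}_1$. If $g$ is continuous, then $\{g < \alpha\}$ and $\{g > \alpha\}$ are open in $I$ and hence Euclidean $F_\sigma$. Otherwise $g = \lim_{n\to\infty} g_n$ pointwise with each $g_n$ continuous, and the key identity is
\begin{equation*}
\{x \in I : g(x) < \alpha\} \;=\; \bigcup_{k=1}^{\infty}\bigcup_{N=1}^{\infty}\bigcap_{n \geq N}\{x \in I : g_n(x) \leq \alpha - 1/k\},
\end{equation*}
which holds because $\lim_n g_n(x) < \alpha$ iff $g_n(x) \leq \alpha - 1/k$ eventually for some $k$. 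Since each $g_n$ is continuous the innermost set is closed, the intersection over $n \geq N$ is closed, and the whole expression is a countable union of closed sets, i.e.\ $F_\sigma$. The argument for $\{g > \alpha\}$ is symmetric.

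For the sufficiency, assume both $\{g < \alpha\}$ and $\{g > \alpha\}$ are Euclidean $F_\sigma$ for every $\alpha \in \mathbb{R}$. The plan proceeds in three steps. First I would reduce to the bounded case: set $h = \arctan \circ g$, which maps $I$ into $(-\pi/2, \pi/2)$. Since $\arctan$ is a homeomorphism of $\mathbb{R}$ onto its image, the sets $\{h < \beta\} = \{g < \tan \beta\}$ and $\{h > \beta\} = \{g > \tan \beta\}$ inherit the $F_\sigma$ property for $\beta$ in the image, and are trivially $\varnothing$ or all of $I$ outside it. Once $h$ is shown to be of Baire class $\leq 1$, the preceding composition theorem yields $g = \tan \circ h \in \mathcal{B}_0 \cup \mathcal{B}_1$.

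Next, for bounded $g$ with $a \leq g \leq b$, for each $n$ I would partition $[a,b]$ as $a = y_0 < y_1 < \cdots < y_m = b$ with mesh below $1/n$, and define the simple function $g_n(x) = y_k$ whenever $y_k \leq g(x) < y_{k+1}$ (with $g_n(x) = y_{m-1}$ if $g(x) = b$). Then $|g_n - g| < 1/n$ uniformly. The crux is to show each $g_n$ is of Baire class $\leq 1$; granted this, a diagonal extraction converts the uniform convergence $g_n \to g$ (with each $g_n$ a pointwise limit of continuous functions) into a single sequence of continuous functions tending pointwise to $g$, placing $g$ in $\mathcal{B}_0 \cup \mathcal{B}_1$.

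The main obstacle is precisely this Baire class $\leq 1$ status of the simple function $g_n = \sum_{k} y_k \chi_{E_k}$, where $E_k = \{y_k \leq g < y_{k+1}\}$. The strategy I would follow is to expand the $F_\sigma$ sets $\{g < y_{k+1}\} = \bigcup_j F_{k+1,j}$ and $\{g > y_k\} = \bigcup_j G_{k,j}$ as increasing unions of closed sets, build continuous approximations of the form $\max\bigl(0,\, 1 - j\,\mathrm{dist}(\cdot, F_{k,j})\bigr)$ which tend upward to the indicators of the closed building blocks, and combine them across successive levels $y_k$ so that a suitable weighted sum converges pointwise to $g_n$. This delicate interplay at the partition boundaries, where the jumps of $g_n$ are localized on sets of $F_\sigma$ type resolvable by continuous distance-based bumps, is the classical constructive core of the proof in Natanson's treatise.
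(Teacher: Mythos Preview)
The paper does not prove this statement at all: Theorem~5.8 is quoted from Natanson's treatise with a citation and no argument, as a tool to be used later (in Theorem~5.9 and Theorem~5.11). So there is no ``paper's own proof'' to compare your proposal against.

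As for your sketch itself, it follows the classical line and is essentially sound. The necessity via the identity
\[
\{g<\alpha\}=\bigcup_{k}\bigcup_{N}\bigcap_{n\ge N}\{g_n\le \alpha-1/k\}
\]
is correct and standard. For the sufficiency, the reduction to the bounded case via $\arctan$ together with the composition theorem (the paper's Theorem~5.7) is exactly the right move, and approximating uniformly by simple functions subordinate to a partition of the range is the usual strategy. One point worth tightening: the level sets $E_k=\{y_k\le g<y_{k+1}\}$ are $F_\sigma\cap G_\delta$ but not obviously $F_\sigma$ on their own, so the Baire-class-one status of the step function $g_n$ is not immediate from naive indicator approximations. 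The clean way through is to write $g_n$ in the telescoping form
\[
g_n=y_0+\sum_{k=1}^{m-1}(y_k-y_{k-1})\,\chi_{\{g\ge y_k\}},
\]
observe that each set $\{g\ge y_k\}$ is $G_\delta$ (complement of the $F_\sigma$ set $\{g<y_k\}$) while its complement $\{g<y_k\}$ is $F_\sigma$, so each such set is ambiguous of the first class and its indicator is Baire class~$\le 1$; a finite linear combination then is as well. Your distance-bump construction can be made to do the same job, but you should be explicit that it is the ambiguous nature of the building blocks (both $F_\sigma$ and $G_\delta$) that drives the argument, not merely the $F_\sigma$ side.
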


\begin{thm}
Given any fixed interval $I$ if $g:I \rightarrow{\mathbb{R}}$ is $\mathcal{I}-\mathbb{A}\mathbb{C}$ function, then $g$ belongs to first Baire class.
\end{thm}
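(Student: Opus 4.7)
My approach is to apply Theorem 5.7 to reduce the statement to showing that, for every $\alpha \in \mathbb{R}$, both $\{x \in I : g(x) > \alpha\}$ and $\{x \in I : g(x) < \alpha\}$ are Euclidean $F_\sigma$. Because $-g$ is again $\mathcal{I}-\mathbb{A}\mathbb{C}$ (Theorem 5.2 applied with scalar $-1$), the two cases are symmetric, so I shall work only with $E := \{g > \alpha\}$. Theorem 5.5 guarantees that $g$ is Lebesgue measurable, so $E$ is measurable.

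Decompose $E = \bigcup_{n \in \mathbb{N}} E_n$ where $E_n := \{g \geq \alpha + 1/n\}$; it suffices to exhibit an $F_\sigma$ representation of $E$ built from closed sub-slices of the $E_n$. By Lusin's theorem (Theorem 5.4), for every $k \in \mathbb{N}$ there is a closed set $F_k \subset I$ with $m(I \setminus F_k) < 1/k$ such that $g|_{F_k}$ is continuous. Each $F_k \cap E_n = (g|_{F_k})^{-1}([\alpha + 1/n,\infty))$ is then closed in $F_k$ and hence in $\mathbb{R}$, so
\[
P := \bigcup_{n,k \in \mathbb{N}} F_k \cap E_n
\]
is an $F_\sigma$ subset of $E$, and $E \setminus P$ is contained in the Lebesgue null set $I \setminus \bigcup_k F_k$.

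To conclude, I must argue $E = P$; this is where the $\mathcal{I}-\mathbb{A}\mathbb{C}$ hypothesis (as opposed to mere measurability) becomes essential. Fix $x \in E$, choose $n$ with $g(x) > \alpha + 1/n$ strictly, and invoke the definition of $\mathcal{I}-\mathbb{A}\mathbb{C}$ to obtain $D_x \in \mathcal{L}$ with $\mathcal{I}-d(x, D_x) = 1$ and $g|_{D_x}$ continuous at $x$. Continuity of $g|_{D_x}$ furnishes an open Euclidean neighbourhood $U$ of $x$ on which $g(y) > \alpha + 1/n$ for every $y \in D_x \cap U$, so $D_x \cap U \subset E_n$. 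Because $I \setminus \bigcup_k F_k$ is null, its $\mathcal{I}$-density at $x$ equals $0$, so Theorem 2.12 yields $\mathcal{I}-d(x, \bigcup_k F_k) = 1$; combining this with $\mathcal{I}-d(x, D_x) = 1$ through the intersection argument used in the proof of Theorem 5.2 gives $\mathcal{I}-d(x, D_x \cap U \cap \bigcup_k F_k) = 1$. Hence this intersection is nonempty and accumulates on $x$ in the Euclidean sense; every one of its points lies in some $F_k$ and in $E_n$, and therefore in $P$, placing $x$ in the Euclidean closure of $P$.

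The main difficulty is promoting Euclidean-closure membership $x \in \overline{P}$ to the genuine membership $x \in P$: the Lusin closed sets $F_k$ may all miss $x$, so $P$ itself might not contain $x$. I expect to bridge this gap by invoking the $\mathcal{I}$-analogue of the Lusin--Menchoff condition developed earlier in the paper (in the construction of bounded $\mathcal{I}-\mathbb{A}\mathbb{C}$ functions), which produces a closed set $\widetilde{F}_k \supset F_k \cup \{x\}$ inside $E_n$ on which $g$ remains continuous at $x$. Replacing $F_k \cap E_n$ by this enlargement (and repeating if necessary over countably many $n$) inserts $x$ into the $F_\sigma$ family defining $P$ while preserving its structural properties. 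Carrying out this enlargement rigorously, and verifying the compatibility of continuity on $\widetilde{F}_k$ at $x$ with the $\mathcal{I}$-density-$1$ witness $D_x$, is the real technical obstacle; everything else is routine manipulation of Lusin approximations together with the $\mathcal{I}$-density calculus of Sections 2 and 3.
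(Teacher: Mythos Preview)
Your proposal has a genuine gap, and the fix you sketch does not close it. You correctly produce an $F_\sigma$ set $P\subset E$ with $m(E\setminus P)=0$, but the step ``$E=P$'' is where the argument breaks. The Lusin--Menchoff theorem (Theorem~6.9) takes a closed set $Z\subset H$ with $\mathcal{I}$-density~$1$ and returns a perfect set between them with the same \emph{density} property; it says nothing about continuity of $g$ on that perfect set. Your proposed enlargement $\widetilde{F}_k\supset F_k\cup\{x\}$ must carry continuity of $g|_{\widetilde{F}_k}$ at $x$, a pointwise function-theoretic constraint that density alone does not enforce. Worse, the residual set $E\setminus P$, though null, can be uncountable; even if a single point could be adjoined, you give no mechanism for absorbing all such points into countably many closed pieces while keeping $P$ an $F_\sigma$. (Incidentally, the $F_\sigma$ characterization you invoke is Theorem~5.8, not~5.7.)

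The paper's proof takes an entirely different route and avoids the $F_\sigma$ characterization. For bounded $g$ it sets $G(x)=\int_a^x g(t)\,dt$ and uses the $\mathcal{I}-\mathbb{A}\mathbb{C}$ hypothesis directly to show $G'(r)=g(r)$ at \emph{every} $r\in I$: one splits $\frac{1}{k}\int_r^{r+k}|g(t)-g(r)|\,dt$ over $[r,r+k]\cap B_r$ (controlled by continuity of $g|_{B_r}$) and $[r,r+k]\setminus B_r$ (controlled by $\mathcal{I}-d(r,B_r^{c})=0$ and the bound on $g$). Then $g(r)=\lim_n n\{G(r+\tfrac{1}{n})-G(r)\}$ exhibits $g$ as a pointwise limit of continuous functions. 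The unbounded case is reduced to the bounded one via a homeomorphism $h:\mathbb{R}\to(0,1)$ and Theorem~5.7 applied to $g=h^{-1}\circ(h\circ g)$.
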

\begin{proof}
Since $g$ is $\mathcal{I}-\mathbb{A}\mathbb{C}$ so by Theorem 5.5, $g$ is measurable. First let us take $g$ to be bounded. Then there exists a positive number $M$ such that $|g(x)|<M$ for $x \in I$. Now for $a \in I$ define
\begin{equation*}
    G(x)=\int_{a}^{x}g(t)dt
\end{equation*}
Then $G: I \rightarrow{\mathbb{R}}$ is a continuous function. We claim for each $r \in I$, 
\begin{equation}
    \lim_{k \rightarrow{0}}\frac{G(r+k)-G(r)}{k}=g(r)
\end{equation}
i.e. given any $\epsilon>0$ there exists $\delta>0$ such that $|\frac{1}{k}\int_{r}^{r+k}g(t)dt-g(r)|<\epsilon$ whenever $k<\delta$. Since $g$ is $\mathcal{I}-\mathbb{A}\mathbb{C}$ on $I$ so for $r \in I$ there exists $B_r \subset I$ such that $\mathcal{I}-d(r,B_r)=1$ and $g|_{B_r}$ is continuous at $r$. So for each $k>0$
\begin{equation}
\begin{split}
    \bigg |\frac{1}{k}\int_{r}^{r+k}g(t)dt-g(r)\bigg | & = \bigg |\frac{1}{k}\int_{r}^{r+k}(g(t)-g(r))dt \bigg |\\
    & \leq \frac{1}{k}\int_{r}^{r+k}|g(t)-g(r)|dt\\
    & = \frac{1}{k}\int_{[r,r+k]\cap B_r}|g(t)-g(r)|dt+\frac{1}{k}\int_{[r,r+k]\setminus B_r}|g(t)-g(r)|dt
\end{split}
\end{equation}
Now for given any $\epsilon>0$ we choose $\delta>0$ such that
\begin{enumerate}
    \item since $g|_{B_r}$ is continuous at $r$ so for $t \in B_r \cap (r-\delta,r+\delta)$ we have $|g(t)-g(r)|<\frac{\epsilon}{2}$
    \item since $\mathcal{I}-d(r,B_r)=1$ so $\mathcal{I}-d(r,B_r^{c})=0$. Hence for some $k<\delta$ we have $\frac{m([r,r+k]\setminus B_r)}{k}< \frac{\epsilon}{4M}$
\end{enumerate}
For $k<\delta$ from $(5.5)$ we obtain
\begin{equation}
    \begin{split}
        \bigg |\frac{1}{k}\int_{r}^{r+k}g(t)dt-g(r)\bigg | & \leq \frac{1}{k} \cdot \frac{\epsilon}{2}\cdot m([r,r+k])+\frac{1}{k}\cdot 2M \cdot m([r,r+k]\setminus B_r)\\
        & < \frac{1}{k} \cdot \frac{\epsilon}{2}\cdot k+\frac{1}{k}\cdot 2M \cdot \frac{\epsilon k}{4M}\\
        & = \epsilon
    \end{split}
\end{equation}
Similarly calculating for $k<0$ we obtain (5.4). Thus for each $r \in I$ we have
\begin{align*}
     g(r) &=\lim_{k \rightarrow{0}}\frac{G(r+k)-G(r)}{k} = \lim_{n \rightarrow{\infty}}\frac{G(r+\frac{1}{n})-G(r)}{\frac{1}{n}} = \lim_{n \rightarrow{\infty}} n\{G(r+\frac{1}{n})-G(r)\}
\end{align*}

Now let $G_n(r)=n\{G(r+\frac{1}{n})-G(r)\}$. Then $G_n$ is continuous since $G$ is continuous. Therefore $g$ is in first Baire class.

Now if $g:I \rightarrow{\mathbb{R}}$ is unbounded then let $h:\mathbb{R} \rightarrow{(0,1)}$ be a homeomorphism. So, $h$ and $h^{-1}$ are continuous. Also by Theorem 5.3, \ $h \circ g: I \rightarrow{(0,1)}$ is $\mathcal{I}-\mathbb{A}\mathbb{C}$ and $(h \circ g)$ is bounded. So by the first part $(h \circ g)$ is in first Baire class. Now $g=h^{-1} \circ (h \circ g)$. Hence by Theorem 5.7, $g$ is in first Baire class. 

\end{proof}

The next lemma is based on the idea presented in \cite{Density topologies}(Theorem 3.1) and the condition presented in this lemma will be called the condition $(J_2)$ of J. M. Jedrzejewski.

\begin{lma}
Let $\{G_n\}_{n \in \mathbb{N}}$ be any decreasing sequence of Lebesgue measurable sets such that for some $x_0 \in \mathbb{R}$, $\mathcal{I}-d(x_0,G_n)=1$ \ $\forall n \in \mathbb{N}$. Then there exists a decreasing sequence $\{s_n\}_{n \in \mathbb{N}}$ of positive real numbers converging to zero such that
\begin{equation*}
    A_{x_0}=\bigcup_{n=1}^{\infty}(G_n \setminus (x_0-s_n,x_0+s_n)) \ \mbox{and} \ \mathcal{I}-d(x_0,A_{x_0})=1
\end{equation*}
\end{lma}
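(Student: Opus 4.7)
The plan is first to upgrade the pointwise $\mathcal{I}$-density hypothesis to a uniform estimate on each $G_n$, then to choose the cut-offs $s_n$ shrinking rapidly enough that the excluded balls $(x_0-s_n, x_0+s_n)$ become negligible relative to comparable intervals.

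The first step is to show that for each $n$ there exists $r_n>0$ such that every closed interval $I \ni x_0$ with $0<m(I)<2r_n$ satisfies $m(G_n \cap I)/m(I) > 1-1/n$. Otherwise, for some $\epsilon>0$, one could extract closed intervals $I_k$ about $x_0$ with $0<m(I_k)<1/k$ and $m(G_n \cap I_k)/m(I_k) \leq 1-\epsilon$; since then $\mathscr{S}(I_k)=\mathbb{N} \in \mathcal{F}(\mathcal{I})$, this would contradict $\mathcal{I}-d(x_0, G_n)=1$. Replacing $r_n$ by $\min(r_1,\ldots,r_n)$, I may take $\{r_n\}$ decreasing.

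I then define $s_n$ inductively by $s_1 = \min(r_3, 1)$ and $s_{n+1} = \min(r_{n+3},\, s_n/(n+1))$; so $\{s_n\}$ decreases strictly to $0$, $s_n \leq r_{n+2}$, and $s_{n+1} \leq s_n/(n+1)$. To verify $\mathcal{I}-d(x_0, A_{x_0})=1$, I fix an arbitrary sequence $\{J_k\}$ of closed intervals about $x_0$ with $\mathscr{S}(J_k)\in\mathcal{F}(\mathcal{I})$ and any $\epsilon>0$; choose $N$ with $3/(N+2)<\epsilon$. For each $k$ with $m(J_k) < s_N$, write $J_k=[x_0-a_k, x_0+b_k]$, set $t_k=\max(a_k, b_k)$, and let $n(k)$ be the largest $n$ with $t_k < s_n$; by construction $n(k)\geq N$ and $t_k\geq s_{n(k)+1}$. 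Taking $m=n(k)+2$ and using $G_m\setminus(x_0-s_m, x_0+s_m)\subseteq A_{x_0}$, the uniform estimate applied at scale $m$ (valid since $m(J_k) \leq 2 t_k < 2 s_{n(k)} \leq 2 r_m$), together with $m(J_k)\geq s_{n(k)+1}$ and $s_m \leq s_{n(k)+1}/(n(k)+2)$, yields
\[
\frac{m(J_k\cap A_{x_0})}{m(J_k)} \geq 1 - \frac{1}{n(k)+2} - \frac{2}{n(k)+2} = 1 - \frac{3}{n(k)+2} > 1-\epsilon.
\]

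Finally, the set $\{k : m(J_k)\geq s_N\}$ is contained in $(\mathbb{N}\setminus\mathscr{S}(J_k)) \cup \{k : k\leq 1/s_N\}$, which lies in $\mathcal{I}$ by admissibility; hence $\{k : m(J_k\cap A_{x_0})/m(J_k) \leq 1-\epsilon\} \in \mathcal{I}$, and as $\epsilon$ and $\{J_k\}$ were arbitrary, $\mathcal{I}-d(x_0, A_{x_0})=1$. The main obstacle is the delicate calibration in the construction of $s_n$: the sequence must shrink fast enough that $s_{n+2}/s_{n+1}$ is small (which is what forces the choice $m=n(k)+2$ rather than $n(k)+1$, since $s_{n(k)+1}/t_k$ can be as large as $1$), while simultaneously respecting $s_n \leq r_{n+2}$ so that the uniform estimate applies at scale $\sim s_{n(k)}$.
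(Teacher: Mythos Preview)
Your proof is correct and follows a genuinely different route from the paper's. The paper fixes a test sequence $\{I_k\}$ with $\mathscr{S}(I_k)\in\mathcal{F}(\mathcal{I})$ at the outset, extracts filter sets $C_{\delta_n}^{(n)}=\{k:m(G_n\cap I_k)/m(I_k)>1-\delta_n\}$, picks indices $k_n$ along which $m(I_{k_n})$ decreases, and sets $s_n=\delta_n\,m(I_{k_{n+1}})$; the verification of $\mathcal{I}\text{-}d(x_0,A_{x_0})=1$ is then carried out along that same $\{I_k\}$. Your argument instead first upgrades the hypothesis to a classical density estimate---for each $n$ there is $r_n>0$ with $m(G_n\cap I)/m(I)>1-1/n$ for \emph{every} closed interval $I\ni x_0$ of length below $2r_n$ (the contrapositive yields a sequence with $\mathscr{S}=\mathbb{N}$ violating $\mathcal{I}\text{-}d(x_0,G_n)=1$)---and then manufactures $\{s_n\}$ purely from the radii $r_n$, with the recursive damping $s_{n+1}\le s_n/(n+1)$ making the excised ball negligible at the scale selected by $n(k)$.

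What your approach buys is that the cut-offs $s_n$ are chosen once and for all, independently of any test sequence, so the final estimate $m(J_k\cap A_{x_0})/m(J_k)>1-3/(n(k)+2)$ holds uniformly for \emph{every} admissible $\{J_k\}$; this cleanly matches the ``for all sequences'' reading of the $\mathcal{I}$-density definition and sidesteps the dependence of $s_n$ on $\{I_k\}$ present in the paper's construction. The paper's version, by contrast, stays entirely within the $\mathcal{I}$-limit machinery and never invokes classical density, which keeps it closer in spirit to the surrounding development. One minor wording point: in your first step the ``for some $\epsilon>0$'' is simply $\epsilon=1/n$ for the fixed $n$ under consideration; stating this explicitly would remove any ambiguity.
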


\begin{proof}
Let $\{\delta_n\}_{n \in \mathbb{N}}$ be a strictly decreasing sequence such that $0<\delta_n <1$ $\forall n \in \mathbb{N}$ and $\delta_n \rightarrow{0}$ as $n \rightarrow{\infty}$. Now since $\mathcal{I}-d(x_0,G_n)=1$ $\forall n \in \mathbb{N}$ so for any sequence of closed intervals $\{I_k\}_{k \in \mathbb{N}}$ about $x_0$ such that $\mathscr{S}(I_k) \in \mathcal{F}(\mathcal{I})$ we have $\mathcal{I}-\lim_{k}\frac{m(G_n \cap I_k)}{m(I_k)}=1$. So for given any $\epsilon>0$ and for each $n \in \mathbb{N}$,
\begin{equation*}
    C_{\epsilon}^{(n)}=\{k:\frac{m(G_n \cap I_k)}{m(I_k)}>1-\epsilon\} \in \mathcal{F}(\mathcal{I})
\end{equation*}
Now for $\epsilon=\delta_n$ there exists $k_n \in \mathbb{N}$ such that for $k \geq k_n$ and $k \in C_{\delta_n}^{(n)}$, 
\begin{equation*}
    \frac{m(G_n \cap I_k)}{m(I_k)}> 1-\delta_{n}
\end{equation*}
We choose $k_n$'s so that $\{k_n\}_{n \in \mathbb{N}}$ is increasing and the sequence $\{m(I_{k_n})\}_{n \in \mathbb{N}}$ is decreasing. Thus consider a subsequence $\{I_{k_n}\}_{k_n \in C_{\delta_n}^{(n)}}$ of the sequence $\{I_k\}_{k \in C_{\delta_n}^{(n)}}$ and put
\begin{equation*}
    s_n=\delta_n m(I_{k_{n+1}}) \ \mbox{for} \ n \in \mathbb{N} \ \mbox{and} \ k_{n+1} \in C_{\delta_n}^{(n)}
\end{equation*}
Since $\delta_n \rightarrow{0}$ and $m(I_{k_{n+1}})<\frac{1}{k_{n+1}}$ so $s_n \rightarrow{0}$ as $n \rightarrow{\infty}$. Since $\delta_n$ is decreasing and $m(I_{k_{n}})$ is decreasing, $s_n$ is decreasing. Without any loss of generality we can assume that $m(I_k)$ is decreasing for $k \in C_{\delta_n}^{(n)}$. For $\delta >0$ there exists $n_0 \in \mathbb{N}$ such that $3 \delta_n < \delta$ for $n > n_0$. Moreover there exists $l_1 \in \mathbb{N}$ such that $m(I_k)<m(I_{k_{n_0+1}})$ for $k > l_1$ and $k \in C_{\delta_n}^{(n)}$. Now fix $k>l_1$ and $k \in C_{\delta_n}^{(n)}$. So there exists $n_1 > n_0$ such that 
\begin{equation*}
    m(I_{k_{n_1+1}})\leq m(I_{k}) < m(I_{k_{n_1}})
\end{equation*}
Since $\{m(I_k)\}_{k \in C_{\delta_n}^{(n)}}$ is decreasing sequence so $k > k_{n_1}$. Thus for fixed $n=n_1$ we have
\begin{equation}
    \begin{split}
        \frac{m((G_{n_1} \setminus (x_0-s_{n_1},x_0+s_{n_1}))\cap I_k)}{m(I_k)}
        & = \frac{m((G_{n_1} \cap I_k)\setminus (x_0-s_{n_1},x_0+s_{n_1}))}{m(I_k)}\\
        & \geq \frac{m(G_{n_1} \cap I_k)-2s_{n_1}}{m(I_k)}\\
        & = \frac{m(G_{n_1} \cap I_k)}{m(I_k)}-\frac{2s_{n_1}}{m(I_k)}\\
        & > 1-\delta_{n_1}-\frac{2 \delta_{n_1}m(I_{k_{n_1+1}})}{m(I_k)}\\
        & > 1- \delta_{n_1} - 2 \delta_{n_1}\\
        & = 1- 3\delta_{n_1}\\
        & > 1-\delta
    \end{split}
\end{equation}
So since for all $k \in \mathbb{N}$
\begin{equation*}
    \frac{m(A_{x_0}\cap I_k)}{m(I_k)} \geq \frac{m((G_{n_1} \setminus (x_0-s_{n_1},x_0+s_{n_1}))\cap I_k)}{m(I_k)}
\end{equation*}
We have 
\begin{equation*}
    \{k:\frac{m((G_{n_1} \setminus (x_0-s_{n_1},x_0+s_{n_1}))\cap I_k)}{m(I_k)}>1-\delta\} \subset \{k:\frac{m(A_{x_0}\cap I_k)}{m(I_k)}>1-\delta\}
\end{equation*}
Moreover since $\mathcal{I}$ is an admissible ideal, so
\begin{equation*}
    \{k:\frac{m((G_{n_1} \setminus (x_0-s_{n_1},x_0+s_{n_1}))\cap I_k)}{m(I_k)}>1-\delta\} \supset C_{\delta_n}^{(n)} \cap (\mathbb{N}\setminus \{1,2,\cdots,l_1\}) \in \mathcal{F}(\mathcal{I})
\end{equation*}
Hence,$\{k: \frac{m(A_{x_0}\cap I_k)}{m(I_k)}>1-\delta\}\in \mathcal{F}(\mathcal{I})$. Therefore, $\mathcal{I}-\lim_{k} \frac{m(A_{x_0}\cap I_k)}{m(I_k)}=1$. Thus $\mathcal{I}-d(x_0,A_{x_0})=1$.
\end{proof}

\begin{thm}
Given any fixed interval $I$,\ $g:I \rightarrow{\mathbb{R}}$ is $\mathcal{I}-\mathbb{A}\mathbb{C}$ function if and only if for each $\mu \in \mathbb{R}$ both the sets $C^{\mu}=\{x:g(x)<\mu\}$ and $C_{\mu}=\{x:g(x)>\mu\}$ belongs to the topology $\mathfrak{T}_\mathcal{I}$.
\end{thm}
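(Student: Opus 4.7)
The statement says precisely that $g$ is $\mathcal{I}$-approximately continuous iff $g\colon(I,\mathfrak{T}_\mathcal{I})\to(\mathbb{R},\mathfrak{T}_U)$ is continuous, since the family $\{(-\infty,\mu),(\mu,\infty):\mu\in\mathbb{R}\}$ is a subbase for $\mathfrak{T}_U$ and $g^{-1}((-\infty,\mu))=C^{\mu}$, $g^{-1}((\mu,\infty))=C_{\mu}$. So the task splits into verifying the two inclusions.

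\textbf{Necessity.} Assume $g$ is $\mathcal{I}$-$\mathbb{A}\mathbb{C}$. By Theorem 5.5, $g$ is measurable and hence $C^{\mu},C_{\mu}\in\mathcal{L}$. Fix $\mu\in\mathbb{R}$ and $x_0\in C^{\mu}$. Pick $E_{x_0}\in\mathcal{L}$ with $\mathcal{I}\textrm{-}d(x_0,E_{x_0})=1$ and $g|_{E_{x_0}}$ continuous at $x_0$. Since $g(x_0)<\mu$, continuity of $g|_{E_{x_0}}$ supplies $\delta>0$ so that $E_{x_0}\cap(x_0-\delta,x_0+\delta)\subset C^{\mu}$. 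The open interval $(x_0-\delta,x_0+\delta)$ lies in $\mathfrak{T}_U\subset\mathfrak{T}_\mathcal{I}$ (Theorem 4.5), so $\mathcal{I}\textrm{-}d(x_0,(x_0-\delta,x_0+\delta))=1$. Using the fact (proved inside Theorem 5.2) that the intersection of two sets of $\mathcal{I}$-density $1$ at $x_0$ again has $\mathcal{I}$-density $1$ at $x_0$, together with monotonicity of $\mathcal{I}\textrm{-}d_{-}$, we obtain $\mathcal{I}\textrm{-}d_{-}(x_0,C^{\mu})=1$, so $C^{\mu}\in\mathfrak{T}_\mathcal{I}$. The argument for $C_{\mu}$ is identical.

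\textbf{Sufficiency.} Assume $C^{\mu},C_{\mu}\in\mathfrak{T}_\mathcal{I}$ for every $\mu$. Fix $x_0\in I$ and for each $n\in\mathbb{N}$ set
\[
U_n=C_{g(x_0)-\frac{1}{n}}\cap C^{g(x_0)+\frac{1}{n}}=\Bigl\{x\in I:|g(x)-g(x_0)|<\tfrac{1}{n}\Bigr\}.
\]
Each $U_n$ lies in $\mathfrak{T}_\mathcal{I}$ (finite intersection of open sets), contains $x_0$, and $\{U_n\}$ is decreasing. Hence $\mathcal{I}\textrm{-}d(x_0,U_n)=1$ for every $n$. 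Invoking Lemma 5.10 with this decreasing family yields a strictly decreasing sequence $\{s_n\}$ of positive reals with $s_n\downarrow 0$ such that
\[
A_{x_0}=\bigcup_{n=1}^{\infty}\bigl(U_n\setminus(x_0-s_n,x_0+s_n)\bigr),\qquad \mathcal{I}\textrm{-}d(x_0,A_{x_0})=1.
\]
Put $E_{x_0}=A_{x_0}\cup\{x_0\}$; adjoining a singleton does not affect $\mathcal{I}$-density, so $\mathcal{I}\textrm{-}d(x_0,E_{x_0})=1$. To check that $g|_{E_{x_0}}$ is continuous at $x_0$, let $\epsilon>0$ and choose $N$ with $1/N<\epsilon$; take $\delta=s_N$. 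If $x\in E_{x_0}$ satisfies $0<|x-x_0|<\delta$, then $x\in U_k\setminus(x_0-s_k,x_0+s_k)$ for some $k$, whence $s_k\leq|x-x_0|<s_N$. Strict monotonicity of $\{s_n\}$ forces $k>N$, so $|g(x)-g(x_0)|<\frac{1}{k}<\frac{1}{N}<\epsilon$. Thus $g$ is $\mathcal{I}$-$\mathbb{A}\mathbb{C}$ at $x_0$.

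\textbf{Anticipated obstacle.} The forward direction is straightforward once one recognises the subbase reformulation. The real work is in sufficiency: one cannot set $E_{x_0}=\bigcap_n U_n$ because a countable intersection need not retain $\mathcal{I}$-density $1$. Lemma 5.10 is designed exactly to handle this by a diagonalisation that excises ever smaller intervals around $x_0$, and the heart of the proof is checking that the resulting sequence $\{s_n\}$ simultaneously (i) preserves $\mathcal{I}$-density $1$ and (ii) forces the level $k=k(x)$ of an approaching point to go to infinity, which is what yields continuity of $g|_{E_{x_0}}$ at $x_0$.
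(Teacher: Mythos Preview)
Your proof is correct and follows essentially the same route as the paper's, with one cosmetic difference in the necessity direction: the paper establishes $C^{\mu},C_{\mu}\in\mathcal{L}$ by invoking Theorem~5.9 (that $\mathcal{I}$-$\mathbb{A}\mathbb{C}$ functions are of Baire class one) and Theorem~5.8 (so that $C^{\mu},C_{\mu}$ are Euclidean $F_{\sigma}$), whereas you appeal directly to Theorem~5.5 to obtain measurability of $g$. Your shortcut is legitimate and slightly more economical; the paper's detour yields the stronger $F_{\sigma}$ conclusion, though it is not needed here. The sufficiency arguments are identical in substance, both resting on Lemma~5.10; note incidentally that you do not actually need \emph{strict} monotonicity of $\{s_n\}$: from $s_k\le|x-x_0|<s_N$ and mere monotonicity, $k\le N$ would give $s_k\ge s_N$, a contradiction, so $k>N$ follows regardless.
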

\begin{proof}Necessity: Let $g$ be $\mathcal{I}-\mathbb{A}\mathbb{C}$. Then by Theorem 5.9, $g$ is in first Baire class. So by Theorem 5.8, for each $\mu \in \mathbb{R}$, \ $C^{\mu}$ and $C_{\mu}$ are of type Euclidean $F_{\sigma}$. So,  $C^{\mu},C_{\mu}\in \mathcal{L}$. Now we are to show for each $x \in C^{\mu}$, \ $\mathcal{I}-d(x,C^{\mu})=1$.

Let us fix $\mu \in \mathbb{R}$ and let us take $x_0 \in C^{\mu}$. Then $g(x_0)<\mu$. So, $\mu - g(x_0)>0$. Since $g$ is $\mathcal{I}-\mathbb{A}\mathbb{C}$ at $x_0$ so there exists $E \in \mathcal{L}$ such that $\mathcal{I}-d(x_0,E)=1$ and $g|_{E}$ is continuous at $x_0$. Hence, given any $\epsilon >0$ there exists $\delta>0$ such that $x \in (x_0-\delta,x_0+\delta)\cap E$ implies $g(x_0)-\epsilon<g(x)<g(x_0)+\epsilon$. In particular if we choose $\epsilon_{0}=\frac{\mu-g(x_0)}{M}$ for some $M \in \mathbb{N}$ and $M>1$, then $g(x_0)=\mu-M \epsilon_{0}$. So for suitably chosen $\delta_{0}>0$ and for $x \in (x_0-\delta_{0},x_0+\delta_{0})\cap E$ we have 
\begin{equation*}
    g(x)<g(x_0)+\epsilon_{0}=\mu-M\epsilon_{0}+\epsilon_{0}<\mu
\end{equation*}
Thus, $(x_0-\delta_{0},x_0+\delta_{0})\cap E \subset C^{\mu}$. Since, $x_0$ is an $\mathcal{I}$-density point of $(x_0-\delta_{0},x_0+\delta_{0})$ and $E$ so it is $\mathcal{I}$-density point of $(x_0-\delta_{0},x_0+\delta_{0})\cap E$. Therefore, $\mathcal{I}-d(x_0,C^{\mu})=1$.

Sufficiency: Let $x_0 \in I$. Without any loss of generality, we choose $x_0$ in $I$ without being the end points of $I$. Let $\{\epsilon_n\}_{n \in \mathbb{N}}$ be a decreasing sequence of positive real numbers converging to zero. For each $n \in \mathbb{N}$ let $A_n=\{x:g(x)<g(x_0)+\epsilon_{n}\}$ and $B_n=\{x:g(x)>g(x_0)-\epsilon_{n}\}$. By hypothesis $A_n,B_n \in \mathfrak{T}_{\mathcal{I}}$. Let $C_n=A_n \cap B_n$ \ $\forall n \in \mathbb{N}$. Then $C_n=\{x:|g(x)-g(x_0)|<\epsilon_{n}\}$. We observe $C_n \in \mathfrak{T}_{\mathcal{I}}$. Since $x_0 \in C_n$ so $\mathcal{I}-d(x_0,C_n)=1$ $\forall n \in \mathbb{N}$. Since $\{C_n\}_{n \in \mathbb{N}}$ is a decreasing sequence of measurable sets so by lemma 5.10 there exists a strictly decreasing sequence $\{s_n\}_{n \in \mathbb{N}}$ of positive real numbers converging to zero such that
\begin{equation*}
    A_{x_0}=\bigcup_{n=1}^{\infty}(C_n \setminus (x_0-s_n,x_0+s_n)) \ \mbox{and} \ \mathcal{I}-d(x_0,A_{x_0})=1
\end{equation*}
Then $A_{x_0} \in \mathcal{L}$. Now we are to show $g|_{A_{x_0}}$ is continuous at $x_0$. For fixed $\epsilon>0$ there exists $n_0 \in \mathbb{N}$ such that $\epsilon_{n} < \epsilon$ \ $\forall n > n_0$. Now if $x \in A_{x_0} \cap (x_0-s_{n_0},x_0+s_{n_0})$ then $x \in \bigcup_{n=n_0+1}^{\infty}(C_n \setminus (x_0-s_n,x_0+s_n))$. So there exists $n_1 > n_0$ such that $x \in C_{n_1}$. Let us choose $\delta = s_{n_0}$. Then for $x \in A_{x_0} \cap (x_0-\delta,x_0+\delta)$ we have $x \in C_{n_1}$ i.e. $|g(x)-g(x_0)|<\epsilon_{n_1}< \epsilon$. Therefore $g|_{A_{x_0}}$ is continuous at $x_0$. Hence $g$ is $\mathcal{I}-\mathbb{A}\mathbb{C}$ at $x_0$.

\end{proof}

\begin{dfn} [cf.\cite{Hejduk 2017}]
A function $g:\mathbb{R}\rightarrow{\mathbb{R}}$ is $\mathcal{I}$-approximately upper semi-continuous at a point $x_0 \in \mathbb{R}$ if for every $\alpha > g(x_0)$ there exists a set $E_{x_0} \in \mathcal{L}$ such that $\mathcal{I}-d(x_0,E_{x_0})=1$ and $g(x)<\alpha$ for every $x \in E_{x_0}$.

Moreover, $g$ is $\mathcal{I}$-approximately upper semi-continuous if it is $\mathcal{I}$-approximately upper semi-continuous at every point $x \in \mathbb{R}$. Similarly we define $\mathcal{I}$-approximately lower semi-continuity.
\end{dfn}

\begin{thm} A function $g:\mathbb{R}\rightarrow{\mathbb{R}}$ is $\mathcal{I}-\mathbb{A}\mathbb{C}$ if and only if it is $\mathcal{I}$-approximately upper and $\mathcal{I}$-approximately lower semi-continuous.
\end{thm}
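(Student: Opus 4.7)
The plan is to prove each direction in parallel with the classical ``continuous iff upper and lower semi-continuous'' characterization, invoking Lemma 5.10 for the harder direction. For necessity, suppose $g$ is $\mathcal{I}-\mathbb{A}\mathbb{C}$ at $x_0$, giving $E \in \mathcal{L}$ with $\mathcal{I}-d(x_0, E) = 1$ and $g|_E$ continuous at $x_0$. Given any $\alpha > g(x_0)$, the continuity of $g|_E$ at $x_0$ supplies $\delta > 0$ such that $g(x) < \alpha$ for every $x \in E \cap (x_0 - \delta, x_0 + \delta)$. Setting $E_{x_0} = E \cap (x_0 - \delta, x_0 + \delta)$, we invoke the intersection property $\mathcal{I}-d(x_0, A \cap B) = 1$ whenever $\mathcal{I}-d(x_0, A) = \mathcal{I}-d(x_0, B) = 1$ (established inside the proof of Theorem 5.2), together with the trivial observation that $\mathcal{I}-d(x_0, (x_0 - \delta, x_0 + \delta)) = 1$, to obtain $\mathcal{I}-d(x_0, E_{x_0}) = 1$. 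This witnesses $\mathcal{I}$-approximate upper semi-continuity at $x_0$; $\mathcal{I}$-approximate lower semi-continuity is recovered by the symmetric argument.

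For sufficiency, fix $x_0 \in \mathbb{R}$ and choose a strictly decreasing sequence $\epsilon_n \downarrow 0$. Upper semi-continuity at $x_0$ with $\alpha = g(x_0) + \epsilon_n$ provides $U_n \in \mathcal{L}$ with $\mathcal{I}-d(x_0, U_n) = 1$ and $g < g(x_0) + \epsilon_n$ on $U_n$; the symmetric use of lower semi-continuity yields $L_n \in \mathcal{L}$ with $\mathcal{I}-d(x_0, L_n) = 1$ and $g > g(x_0) - \epsilon_n$ on $L_n$. Setting $C_n = \bigcap_{k=1}^{n}(U_k \cap L_k)$ produces a decreasing sequence of measurable sets with $\mathcal{I}-d(x_0, C_n) = 1$ (iterating the intersection property) on which $|g(x) - g(x_0)| < \epsilon_n$. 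Applying Lemma 5.10 to $\{C_n\}$ yields a strictly decreasing sequence $s_n \downarrow 0$ and a measurable set
\[ A_{x_0} = \bigcup_{n=1}^{\infty}\bigl(C_n \setminus (x_0 - s_n, x_0 + s_n)\bigr) \]
with $\mathcal{I}-d(x_0, A_{x_0}) = 1$. To finish, we verify $g|_{A_{x_0}}$ is continuous at $x_0$: given $\epsilon > 0$, choose $n_0$ with $\epsilon_{n_0} < \epsilon$ and set $\delta = s_{n_0}$. Any $x \in A_{x_0} \cap (x_0 - \delta, x_0 + \delta)$ lies in some $C_{n_1} \setminus (x_0 - s_{n_1}, x_0 + s_{n_1})$, so the inequalities $s_{n_1} \leq |x - x_0| < s_{n_0}$ force $n_1 > n_0$ by strict monotonicity of $\{s_n\}$, whence $|g(x) - g(x_0)| < \epsilon_{n_1} < \epsilon$.

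The necessity direction is essentially a direct unpacking of the definitions. The main obstacle lies in the sufficiency direction, where the one-sided $\mathcal{I}$-density witnesses $U_n$ and $L_n$ must be amalgamated into a single measurable set on which $g$ is honestly continuous at $x_0$. A naive countable intersection $\bigcap_n C_n$ need not have $\mathcal{I}$-density $1$, so Lemma 5.10 is the decisive tool: it supplies a diagonalised measurable set that retains full $\mathcal{I}$-density at $x_0$ while progressively enforcing membership in the $C_n$ as one approaches $x_0$. The template for this manoeuvre is already present in the sufficiency half of Theorem 5.11.
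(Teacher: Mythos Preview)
Your proof is correct. The necessity direction matches the paper's argument almost verbatim. For sufficiency, however, the paper takes a different route: rather than building the witnessing set directly, it shows that for every $\alpha$ the level sets $C^{\alpha}=\{x:g(x)<\alpha\}$ and $C_{\alpha}=\{x:g(x)>\alpha\}$ belong to $\mathfrak{T}_{\mathcal{I}}$ (by exhibiting, for each $x_0\in C^{\alpha}$, an $\mathcal{I}-d$ open neighbourhood $V_{x_0}\subset C^{\alpha}$ extracted from the semi-continuity witness via the Lebesgue $\mathcal{I}$-density theorem), and then invokes Theorem~5.11. Your approach instead bypasses Theorem~5.11 and re-runs its sufficiency mechanism in situ: you form the decreasing sets $C_n$ and apply Lemma~5.10 directly to manufacture $A_{x_0}$. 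The paper's route is more modular, reusing an already-established characterisation; yours is more self-contained and makes the dependence on Lemma~5.10 explicit at the point of use. One small remark: Lemma~5.10 as stated only guarantees that $\{s_n\}$ is decreasing, not strictly decreasing, but your deduction $n_1>n_0$ from $s_{n_1}\le |x-x_0|<s_{n_0}$ already follows from weak monotonicity (if $n_1\le n_0$ then $s_{n_1}\ge s_{n_0}$), so the argument stands.
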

\begin{proof}
Necessity: Let $g$ be $\mathcal{I}-\mathbb{A}\mathbb{C}$ at $x_0 \in \mathbb{R}$. So there exists $E \in \mathcal{L}$ such that $\mathcal{I}-d(x_0,E)=1$ and $g|_E$ is continuous at $x_0$. So given any $\epsilon >0$ there exists $\delta>0$ such that whenever $x \in (x_0 -\delta, x_0 + \delta) \cap E$ then $g(x_0)-\epsilon<g(x)<g(x_0)+\epsilon$. Now for every $c\in \mathbb{R}$ and $c>g(x_0)$ choose $\epsilon>0$ such that $g(x_0)+\epsilon<c$. For this $\epsilon>0$, we choose $\delta>0$ such that for every $x \in (x_0 -\delta, x_0 + \delta) \cap E$ we have $g(x)<g(x_0)+\epsilon<c$. Moreover $x_0$ is an $\mathcal{I}$-density point of $(x_0 -\delta, x_0 + \delta) \cap E$. Thus $g$ is $\mathcal{I}$-approximately upper semi-continuous at $x_0$. Since choice of $x_0 \in \mathbb{R}$ is arbitrary hence $g$ is $\mathcal{I}$-approximately upper semi-continuous at every $x \in \mathbb{R}$. Similarly it can be shown that $g$ is $\mathcal{I}$-approximately lower semi-continuous at every $x \in \mathbb{R}$.

Sufficiency: Let $g$ be $\mathcal{I}$-approximately upper and $\mathcal{I}$-approximately lower semi-continuous. For any $\alpha \in \mathbb{R}$ let $C^{\alpha}=\{x \in \mathbb{R}: g(x)<\alpha\}$. Now take $x_0 \in C^{\alpha}$. Then $g(x_0)<\alpha$. Since $g$ is $\mathcal{I}$-approximately upper semi-continuous at $x_0$ so there exists $E_{x_0} \in \mathcal{L}$ such that $\mathcal{I}-d(x_0,E_{x_0})=1$ and $\forall$ $x \in E_{x_0}$, $g(x)<\alpha$. Let us take $\widehat{E}_{x_0}=\{x_0\} \cup E_{x_0}$. Then $\widehat{E}_{x_0} \in \mathcal{L}$. Now define
\begin{equation*}
    V_{x_0}=\{y\in \widehat{E}_{x_0}: \mathcal{I}-d(y,\widehat{E}_{x_0})=1\}
\end{equation*}
Then $V_{x_0}$ is $\mathcal{I}-d$ open and $V_{x_0}\in \mathfrak{T}_{\mathcal{I}}$. Moreover
\begin{equation*}
   y \in V_{x_0} \implies y \in \widehat{E}_{x_0} \implies g(y)<\alpha \implies y \in C^{\alpha} 
\end{equation*} Thus $V_{x_0} \subset C^{\alpha}$. Since choice of $x_0$ is arbitrary so $V_x \subset C^{\alpha}$ for all $x \in C^{\alpha}$. Therefore, $C^{\alpha}=\bigcup_{x \in C^{\alpha}}V_x$ where $V_{x}\in \mathfrak{T}_{\mathcal{I}}$. Consequently, $C^{\alpha} \in \mathfrak{T}_{\mathcal{I}}$. 

In a similar approach we can show for any $\beta \in \mathbb{R}$, $C_{\beta}=\{x \in \mathbb{R}: g(x)>\beta\} \in \mathfrak{T}_{\mathcal{I}}$. Thus by Theorem 5.11 it can be concluded $g$ is $\mathcal{I}-\mathbb{A}\mathbb{C}$.
\end{proof}

We now proceed to prove the main result of this section.

\begin{thm} A function $g:(\mathbb{R},\mathfrak{T}_\mathcal{I}) \to (\mathbb{R},\mathfrak{T}_U)$ is continuous if and only if $g$ is $\mathcal{I}-\mathbb{A}\mathbb{C}$ at every $x\in \mathbb{R}$
\end{thm}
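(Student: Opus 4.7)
The plan is to reduce the statement to Theorem 5.11, which already characterizes $\mathcal{I}$-approximate continuity through the membership of the sets $C^\mu=\{x:g(x)<\mu\}$ and $C_\mu=\{x:g(x)>\mu\}$ in the topology $\mathfrak{T}_\mathcal{I}$. Once this characterization is available, the theorem becomes the standard topological fact that a function is continuous if and only if the preimage of each set in a subbase of the codomain topology is open in the domain, applied to the subbase of $\mathfrak{T}_U$ consisting of open rays $(-\infty,\mu)$ and $(\mu,\infty)$.

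For the necessity direction, I would assume $g:(\mathbb{R},\mathfrak{T}_\mathcal{I}) \to (\mathbb{R},\mathfrak{T}_U)$ is continuous. Since $(-\infty,\mu)$ and $(\mu,\infty)$ are $\mathfrak{T}_U$-open for every $\mu\in\mathbb{R}$, their preimages $C^\mu=g^{-1}((-\infty,\mu))$ and $C_\mu=g^{-1}((\mu,\infty))$ lie in $\mathfrak{T}_\mathcal{I}$. By Theorem 5.11, $g$ is $\mathcal{I}$-$\mathbb{A}\mathbb{C}$ at every $x\in\mathbb{R}$.

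For the sufficiency direction, I would assume $g$ is $\mathcal{I}$-$\mathbb{A}\mathbb{C}$ at every $x\in\mathbb{R}$ and fix an arbitrary $V\in\mathfrak{T}_U$. Since every $\mathfrak{T}_U$-open subset of $\mathbb{R}$ is a countable union of open intervals, write $V=\bigcup_{n\in\mathbb{N}}(a_n,b_n)$. Then
\begin{equation*}
    g^{-1}(V)=\bigcup_{n\in\mathbb{N}}g^{-1}((a_n,b_n))=\bigcup_{n\in\mathbb{N}}\bigl(\{x:g(x)>a_n\}\cap\{x:g(x)<b_n\}\bigr).
\end{equation*}
By Theorem 5.11, each of the two factors $\{x:g(x)>a_n\}$ and $\{x:g(x)<b_n\}$ belongs to $\mathfrak{T}_\mathcal{I}$. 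Since $\mathfrak{T}_\mathcal{I}$ is a topology on $\mathbb{R}$ (Theorem 4.2), it is closed under finite intersections and arbitrary unions, so $g^{-1}(V)\in\mathfrak{T}_\mathcal{I}$. Hence $g$ is continuous with respect to the stated topologies.

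There is no real obstacle here; the entire weight of the argument has been front-loaded into Theorem 5.11 (and the lemma and Baire-class results preceding it). The only care needed is to invoke a countable-union decomposition for open subsets of $\mathbb{R}$ when checking preimages, rather than checking open intervals directly, and to cite Theorem 4.2 so that closure under finite intersection and arbitrary union is justified explicitly.
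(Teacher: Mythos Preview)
Your proof is correct. The sufficiency direction matches the paper's approach almost exactly: both invoke Theorem~5.11 to get $C^{\mu},C_{\mu}\in\mathfrak{T}_\mathcal{I}$ and then take intersections. You phrase it globally via preimages of a countable union of intervals, whereas the paper works pointwise, fixing $x_0$ and producing a single $\mathfrak{T}_\mathcal{I}$-neighbourhood $U=C^{\star}\cap C_{\star}$ mapped into a basic $V$; the content is the same.

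The genuine difference is in the necessity direction. The paper argues directly from the definition: given continuity at $x_0$, it picks a $\mathfrak{T}_U$-open $V\ni g(x_0)$, obtains an $\mathcal{I}\text{-}d$ open $U\ni x_0$ with $g(U)\subset V$, and asserts that $g|_U$ is continuous at $x_0$. You instead route through Theorem~5.11 again: continuity forces each $C^{\mu},C_{\mu}$ into $\mathfrak{T}_\mathcal{I}$, and the sufficiency half of Theorem~5.11 then delivers $\mathcal{I}$-approximate continuity. Your route is arguably tighter, because the definition of $\mathcal{I}\text{-}\mathbb{A}\mathbb{C}$ demands a \emph{single} measurable set $E_{x_0}$ with $\mathcal{I}\text{-}d(x_0,E_{x_0})=1$ on which $g$ is continuous, and the paper's $U$ varies with $V$; the construction of one fixed $E_{x_0}$ is exactly what Lemma~5.10 (and hence the sufficiency half of Theorem~5.11) provides. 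So by invoking Theorem~5.11 in both directions you make that step explicit rather than implicit. The only cosmetic point is that Theorem~5.11 is stated for a fixed interval $I$ rather than all of $\mathbb{R}$, but its proof carries over verbatim, and the paper itself uses it the same way in the sufficiency direction.
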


\begin{proof}
Necessity: Let $g:(\mathbb{R},\mathfrak{T}_\mathcal{I}) \to (\mathbb{R},\mathfrak{T}_U)$ is continuous at $x_0$. So given any $\mathfrak{T}_U$-open set $V$ containing $g(x_0)$ there exists $\mathcal{I}-d$ open set $U$ containing $x_0$ such that $x_0 \in U \subset g^{-1}(V)$. Since $U$ is $\mathcal{I}-d$ open set and $x_0 \in U$, $\mathcal{I}-d(x_0,U)=1$ and so $g|_{U}$ is continuous at $x_0$. Hence $g$ is $\mathcal{I}-\mathbb{A}\mathbb{C}$ at $x_0$. Since choice of $x_0$ is arbitrary. So $g$ is $\mathcal{I}-\mathbb{A}\mathbb{C}$ at every $x$.

Sufficiency: Let $g$ be $\mathcal{I}-\mathbb{A}\mathbb{C}$. Then by Theorem 5.11 for any $\mu \in \mathbb{R}$ we have $C^{\mu}=\{x:g(x)<\mu\}$ and $C_{\mu}=\{x:g(x)>\mu\}$ where both $C^{\mu}$ and $C_{\mu}$ are in $\mathfrak{T}_\mathcal{I}$. Then let $g$ be $\mathcal{I}-\mathbb{A}\mathbb{C}$ at $x_0$ for some $x_0 \in \mathbb{R}$. Let $V$ is an open set in $(\mathbb{R},\mathfrak{T}_U)$ containing $g(x_0)$. Without any loss of generality let $V=(g(x_0)-\epsilon',g(x_0)+\epsilon)$ for some $\epsilon, \epsilon' >0$.  We are to show there exists a set $U \in \mathfrak{T}_\mathcal{I}$ containing $x_0$ such that $g(U) \subset V$. Let $C^{\star}=\{x:g(x)<g(x_0)+\epsilon\}$ and $C_{\star}=\{x:g(x)>g(x_0)-\epsilon'\}$. Then 
\begin{equation*}
    C^{\star} \cap C_{\star}=\{x:g(x_0)-\epsilon'<g(x)<g(x_0)+\epsilon\}
\end{equation*}
Let $C^{\star} \cap C_{\star}=U$. Then $U \in \mathfrak{T}_\mathcal{I}$. Observe that $x_0 \in U$. Now $x \in U$ implies $g(x) \in (g(x_0)-\epsilon',g(x_0)+\epsilon)$. Therefore $g(U) \subset V$. Hence $g$ is continuous at $x_0$. 
\end{proof}

\section{Lusin-Menchoff Theorem}\label{section 6}

The Lusin-Menchoff theorem plays a vital role in proving complete regularity of density topology \cite{Zahorski}. In this paper since we attempt to prove complete regularity of $\mathcal{I}$-density topology so we try to prove analogue of Lusin-Menchoff theorem for $\mathcal{I}$-density.

\begin{dfn}[\cite{kechris}]
A topologiocal space is called Polish if it is seperable and completely metrizable space.
\end{dfn}

\begin{xmpl}
$(\mathbb{R},\mathfrak{T}_U)$ is a Polish space.
\end{xmpl}

\begin{dfn}[\cite{kechris}]
A topological space $X$ is perfect if all its points are limit points or equivalently it contains no isolated points.
\end{dfn}

If $P$ is a subset of a topological space $X$ then $P$ is perfect in $X$ if $P$ is closed and perfect in its relative topology. The following theorem is known as Cantor-Bendixon theorem.
\begin{thm}[\cite{kechris}]
Let $X$ be a Polish space. Then $X$ can be written uniquely as $X=P \cup C$ where $P$ is a perfect subset of $X$ and $C$ is countable open.
\end{thm}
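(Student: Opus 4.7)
The plan is to construct the decomposition via condensation points and then verify the three required properties in turn.

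\textbf{Step 1 (defining the decomposition).} I would let $P := \{x \in X : \text{every open neighborhood of } x \text{ is uncountable}\}$ and $C := X \setminus P$. The set $C$ is open, for if $x \in C$ admits a countable open neighborhood $U$ then every $y \in U$ has $U$ itself as a countable open neighborhood, so $U \subset C$. For countability of $C$, I would use that a Polish space is second countable: fix a countable base $\{B_n\}_{n \in \mathbb{N}}$; every $x \in C$ lies in some $B_n$ contained in a countable open neighborhood of $x$, so $B_n$ itself is countable. Thus $C$ is a union of at most countably many countable basic sets.

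\textbf{Step 2 ($P$ is perfect).} Closedness of $P$ is immediate since its complement $C$ is open. To rule out isolated points of $P$ in the relative topology, take $x \in P$ and any open $U \subset X$ containing $x$. Then $U$ is uncountable while $U \cap C$ is at most countable, so $U \cap P$ is uncountable and in particular contains a point of $P$ distinct from $x$. Hence $x$ is not isolated in $P$.

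\textbf{Step 3 (uniqueness).} Suppose $X = P' \cup C'$ is another decomposition with $P'$ perfect in $X$ and $C'$ countable open. I would first observe $C' \subset C$: every $x \in C'$ has $C'$ itself as a countable open neighborhood, so $x \notin P$. Next I would prove $P' \subset P$: given $x \in P'$ and any open $U \ni x$, the set $V := U \cap P'$ is a nonempty relatively open subset of the perfect Polish space $P'$ (closed subspaces of Polish spaces are Polish). A dyadic tree argument then forces $V$, and hence $U$, to be uncountable, whence $x \in P$. Combined with $P \cap C = \emptyset$ and $X = P' \cup C'$, the two inclusions $P' \subset P$ and $C' \subset C$ force $P = P'$ and $C = C'$.

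\textbf{Main obstacle.} The key nontrivial ingredient in Step 3 is the assertion that every nonempty open subset $V$ of a nonempty perfect Polish space is uncountable. I would handle this by fixing a complete compatible metric on $P'$ and inductively building a dyadic tree of nested nonempty closed balls contained in $V$ with vanishing diameters such that the two children at each node are disjoint; perfection of $P'$ supplies two distinct points inside each ball around which to center the next pair. Completeness then produces a continuous injection of $\{0,1\}^{\mathbb{N}}$ into $V$, so $|V| \geq 2^{\aleph_0}$.
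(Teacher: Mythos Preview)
The paper does not supply its own proof of this statement: Theorem~6.4 is quoted from Kechris with a citation and no argument, serving only as background for the lemmas that follow. So there is no paper proof to compare against.

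Your proposal is a correct and standard proof of the Cantor--Bendixson theorem via condensation points. Step~1 correctly exploits second countability (which follows from separable metrizability) to get $C$ countable; Step~2 is fine. In Step~3 your logic is sound: from $C' \subset C$, $P' \subset P$, $P \cap C = \emptyset$, and $X = P' \cup C'$ one indeed recovers both equalities. One small efficiency note: once you have $C' \subset C$, taking complements in $X = P' \cup C'$ already gives $P = X \setminus C \subset X \setminus C' \subset P'$, so the inclusion $P \subset P'$ comes for free and you only need the dyadic-tree argument to get $P' \subset P$. Your handling of the main obstacle (embedding $\{0,1\}^{\mathbb{N}}$ into any nonempty open subset of a perfect Polish space) is the expected one and is correct.
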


The above result holds good if we take any closed set instead of $X$. Now we state the Perfect set Theorem for Borel sets.
\begin{thm}[\cite{kechris}]
Let $X$ be a Polish space and $A \subset X$ be Borel. Then either $A$ is countable or else it contains a Cantor set.
\end{thm}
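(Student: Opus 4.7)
The plan is to reduce to the closed case handled by Theorem 5.13, via a change of Polish topology. The key classical tool I would invoke is the refinement theorem: given any Borel set $A$ in a Polish space $(X,\tau)$, there exists a finer Polish topology $\tau^{*}$ on $X$ with the same Borel $\sigma$-algebra as $\tau$ in which $A$ is clopen. This is proved by induction over the Borel hierarchy; at each stage one adjoins countably many Borel sets as clopen sets, building a new compatible complete metric as a weighted sum of the old metric and metrics on the adjoined pieces, thereby preserving Polishness.

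Assuming such a $\tau^{*}$, the set $A$ is a clopen, hence Polish, subspace of $(X,\tau^{*})$. Applying Theorem 5.13 to $A$ in this subspace topology, I would write $A = P \cup C$ with $P$ perfect in $\tau^{*}$ and $C$ countable. If $A$ is uncountable, then $P$ is a nonempty perfect Polish space.

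Next I would produce a Cantor set inside $P$ by the standard tree construction. Proceed by induction on finite binary strings $s$: pick points $x_s \in P$ and $\tau^{*}$-open neighborhoods $U_s$ of $x_s$ of $\tau^{*}$-diameter at most $2^{-|s|}$, so that $\overline{U_{s0}}$ and $\overline{U_{s1}}$ are disjoint subsets of $U_s$ with centers in $P$; the perfectness of $P$ makes the branching possible at every step. For each $\alpha \in 2^{\mathbb{N}}$ the sequence $x_{\alpha\upharpoonright n}$ lies in $P$ and is Cauchy in the $\tau^{*}$-metric, so it converges to a point $x_{\alpha} \in P$ since $P$ is $\tau^{*}$-closed. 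The map $\alpha \mapsto x_{\alpha}$ is a continuous injection from $2^{\mathbb{N}}$ into $P$, and hence by compactness a homeomorphism onto its image $K$, which is therefore a Cantor set contained in $A$.

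Finally I would transport $K$ back to the original topology. Since $\tau^{*}$ refines $\tau$, the identity map $(X,\tau^{*}) \to (X,\tau)$ is continuous, so $K$ is compact in $\tau$ as well; a continuous bijection from a compact space to a Hausdorff space is a homeomorphism, so $K$ remains a Cantor set in $(X,\tau)$. The main obstacle is the opening step, producing the refined Polish topology $\tau^{*}$ in which $A$ is clopen: this is where the full Borel hypothesis (as opposed to merely closed) actually enters and where the descriptive-set-theoretic weight of the argument resides.
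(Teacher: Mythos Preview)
The paper does not prove this theorem; it is simply quoted from Kechris and used as a black box in Lemma~6.6. So there is no paper proof to compare against.

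Your argument is the standard descriptive-set-theoretic one and is correct: refine the Polish topology so that the Borel set $A$ becomes clopen (the change-of-topology theorem), apply Cantor--Bendixson to the resulting Polish subspace $A$, build a Cantor scheme inside the perfect kernel, and then push the resulting copy of $2^{\mathbb{N}}$ back to the original topology via the continuous identity map, using compactness to see that the homeomorphism type survives. The one place a reader might want more detail is the opening step---the transfinite construction of $\tau^{*}$---but you correctly flag that this is exactly where the Borel hypothesis does its work, and the construction is standard. One minor point: what you cite as ``Theorem~5.13'' is numbered Theorem~6.4 in this paper.
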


Now we will prove some lemmas which will be needed later in this section.

\begin{lma} Let $B$ be a Borel set. Then for $x\in B$ such that $\mathcal{I}-d(x,B)=1$ there exists a $\mathfrak{T}_U$ perfect set $P$ such that $x\in P \subset B$ 
\end{lma}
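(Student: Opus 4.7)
The plan is to build $P$ by attaching the point $x$ to a sequence of Cantor-style perfect subsets of $B$ that shrink down to $x$ from the right. The first task is to extract from $\mathcal{I}-d(x,B)=1$ the pointwise positivity statement that $m(B\cap(x,x+\epsilon))>0$ for every $\epsilon>0$. I apply the hypothesis to the one-sided testing sequence $J_k=[x,x+1/(k+1)]$, for which $m(J_k)=1/(k+1)<1/k$ and so $\mathscr{S}(J_k)=\mathbb{N}\in\mathcal{F}(\mathcal{I})$; the density assumption gives $\mathcal{I}-\lim m(B\cap J_k)/m(J_k)=1$. Hence $\{k:m(B\cap J_k)>m(J_k)/2\}\in\mathcal{F}(\mathcal{I})$, and since $\mathcal{I}$ is a nontrivial admissible ideal this set must be infinite (its being finite would force $\mathbb{N}\in\mathcal{I}$). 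This produces arbitrarily small $\epsilon$ with $m(B\cap[x,x+\epsilon])>0$, and by monotonicity of measure the same holds for every $\epsilon>0$.

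Next, I would inductively choose pairwise disjoint closed intervals $I_n\subset(x,x+1/n)$ with $m(B\cap I_n)>0$. Having chosen $I_1,\dots,I_{n-1}$, set $d_{n-1}=\min_{k<n}(\inf I_k-x)>0$ and $r_n=\min(1/n,d_{n-1})$; by the previous paragraph the measurable set $B\cap(x,x+r_n)$ has positive measure, and any positive-measure measurable set must meet some closed interval in positive measure, so I may select a closed $I_n\subset(x,x+r_n)$ with $m(B\cap I_n)>0$. Now each $B\cap I_n$ is a Borel set of positive measure, hence uncountable, and the Perfect Set Theorem for Borel sets (Theorem 6.5) applied inside the Polish space $(\mathbb{R},\mathfrak{T}_U)$ furnishes a Cantor set $K_n\subset B\cap I_n$. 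Define
\[
P=\{x\}\cup\bigcup_{n\in\mathbb{N}}K_n.
\]
By construction $x\in P$, $P\subset B$, and each $K_n\subset I_n\subset(x,x+1/n)$.

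It remains to check that $P$ is $\mathfrak{T}_U$-perfect. For closedness, let $y$ be a limit point of $P$: if $y=x$ then $y\in P$, while if $y\neq x$ then $y$ has positive distance $\delta$ from $x$, and since $K_n\subset(x,x+1/n)$ all but finitely many $K_n$ sit inside $(x-\delta/2,x+\delta/2)$, so $y$ is a limit point of the finite closed union $K_1\cup\dots\cup K_N$, giving $y\in P$. For the absence of isolated points, each point of $K_n$ is non-isolated in $K_n$ by perfectness of a Cantor set, and $x$ is non-isolated in $P$ because any choice $p_n\in K_n$ produces a sequence of distinct points of $P$ with $p_n\to x$. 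The main obstacle is the first step: one must unpack the definition of $\mathcal{I}$-density carefully enough to extract one-sided positive-measure information from the hypothesis, using the admissibility of $\mathcal{I}$ to promote ``not in $\mathcal{I}$'' to ``infinite.'' Once Step~1 is secured, the remainder is a standard inductive Cantor-set gluing plus a routine topological bookkeeping.
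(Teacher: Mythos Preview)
Your proof is correct and follows essentially the same approach as the paper's: extract from the $\mathcal{I}$-density hypothesis that $B$ has positive measure in arbitrarily small intervals near $x$, apply the Perfect Set Theorem inside each to obtain perfect pieces $K_n$, and set $P=\{x\}\cup\bigcup_n K_n$. Your treatment is in fact more careful than the paper's in two places---you explicitly justify the one-sided positive-measure step via admissibility and nontriviality of $\mathcal{I}$, and you give a clean inductive construction of the disjoint intervals $I_n$---whereas the paper simply asserts the existence of a suitable disjoint sequence $\{J_n\}$ with $m(J_n\cap B)>0$.
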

\begin{proof}For $x \in B$, $\mathcal{I}-d(x,B)=1$ implies $\mathcal{I}-\lim$ $b_n=1$ where $b_n=\frac{m(I_n \cap B)}{m(I_n)}$, $\{I_n\}_{n \in \mathbb{N}}$ being a sequence of closed intervals about $x$ such that $\mathscr{S}(I_n)\in \mathcal{F}(\mathcal{I})$. Given $\epsilon>0$ let $A_{\epsilon}=\{n:|b_n-1|<\epsilon\}$ then $A_{\epsilon} \in \mathcal{F}(\mathcal{I})$. For $n \in A_{\epsilon}$,
\begin{equation}
    b_n>1-\epsilon\\
    \implies m(I_n\cap B)>(1-\epsilon)m(I_n)\\
    \implies m(I_n\cap B)>0
\end{equation}
Let us take a sequence $\{J_n\}_{n \in \mathbb{N}}=\{[c_n,d_n]\}_{n \in \mathbb{N}}$ of pairwise disjoint intervals such that $dist(x,J_n)\rightarrow{0}$ as $n \rightarrow{\infty}$ and without any loss of generality assume $m(J_n \cap B)>0$ $\forall n \in A_{\epsilon}$. So, $J_n \cap B$ is not countable $\forall n \in A_{\epsilon}$. Since both $J_n$ and $B$ are Borel sets so, $J_n \cap B$ is Borel. Now since $(\mathbb{R},\mathfrak{T}_U)$ is a Polish space by Theorem 6.5, $\forall n \in A_{\epsilon}$, there exists a $\mathfrak{T}_U$-perfect set $P_n$ such that $P_n \subset J_n \cap B$. Since $J_n$'s are pairwise disjoint. Therefore, $\{P_n\}_{n \in A_{\epsilon}}$ is a collection of pairwise disjoint $\mathfrak{T}_U$-perfect set.

Now let $P=\{x\} \cup (\bigcup_{n \in A_{\epsilon}}P_n)$. Then $x \in P \subset B$.\\ 
We claim that $P$ is $\mathfrak{T}_U$-perfect set.

First we show $P$ has no isolated points. Now since for $i \in A_{\epsilon}$  each $P_i$ is $\mathfrak{T}_U$-perfect, so $P
_i$ has no isolated point. Hence $\bigcup_{i \in A_{\epsilon}}P_i$ has no isolated point.
Now to show $x$ is not an isolated point of $P$. Let $N(x)$ be any open neighbourhood about $x$. Then for some $n_0 \in A_{\epsilon}$, $J_{n_0} \cap (N(x)\setminus \{x\}) \neq \phi$. Then for $n_0^{'}>n_0$ and $n_0^{'}\in A_{\epsilon}$ there exists $\mathfrak{T}_U$-perfect set $P_{n_0^{'}}$ such that $P_{n_0^{'}} \cap (N(x)\setminus \{x\})$ is nonempty. Hence $P \cap (N(x)\setminus \{x\})$ is nonempty. So, $x$ is not an isolated point of $P$. Therefore, $P$ has no isolated points.

Next we show $P$ is $\mathfrak{T}_U$-closed. Let $\{s_n\}_{n \in \mathbb{N}}$ be a sequence in $P$ such that $s_n \rightarrow{s}$. We are to show $s \in P$. We have the following two cases:\\
Case(i) Let there are finitely many $s_n$ in each $P_i$ for $i \in A_{\epsilon}$ then without any loss of generality we may assume $s_i \in P_i$ for each $i \in A_{\epsilon}$. Then claim: $s=x$. 
For large $i\in A_{\epsilon}$,
\begin{equation}
    |s-x|\leq|s-s_i|+|s_i-x|\leq|s-s_i|+dist(x,P_{i'})
\end{equation}
Here $i'$ in the subscript of $P_{i'}$ is the immediate predecessor of $i$ in $A_{\epsilon}$.
Also since $dist(x,J_n)\rightarrow{0}$ as $n \rightarrow{\infty}$. So, $dist(x,P_i)\rightarrow{0}$ as $i \rightarrow{\infty}$. 
So, as $i \rightarrow{\infty}$ from 6.2 we can conclude $s=x$. Hence, $s \in P$.\\
Case(ii) If at least one of $P_n$ say $P_i$ contains infinitely many of $s_n$ then suppose there exists a subsequence $\{s_{n_k}\}_{k \in \mathbb{N}}$ of $\{s_n\}_{n \in \mathbb{N}}$ such that $\{s_{n_k}\} \subset P_i$. Since $s_{n_k} \rightarrow{x}$ and $P_i$ is $\mathfrak{T}_U$-perfect so, $s \in P_i$. Therefore, $s \in P$. \\
Hence, $P$ is $\mathfrak{T}_U$-closed. Consequently, $P$ is $\mathfrak{T}_U$-perfect.
\end{proof}

\begin{lma} Let $B$ be a Borel set. Then for every countable set  $C$ such that $cl(C) \subset B$ and $\mathcal{I}-d(x,B)=1 \forall x \in C$ there exists a $\mathfrak{T}_U$ perfect set $P$ such that $C \subset P \subset B$. Here $cl(C)$ stands for $\mathfrak{T}_U$-closure of C.
\end{lma}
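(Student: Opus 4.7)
The plan is to enumerate the countable set as $C=\{c_n\}_{n\in\mathbb{N}}$ and, for each $c_n$, apply Lemma 6.6 to produce a $\mathfrak{T}_U$-perfect set $P_n$ with $c_n\in P_n\subset B$, then glue these together with $\mathrm{cl}(C)$. The candidate will be
\[
P \;=\; \mathrm{cl}(C)\,\cup\,\bigcup_{n=1}^{\infty}P_n .
\]
Inclusion $P\subset B$ is immediate from the hypothesis $\mathrm{cl}(C)\subset B$ together with $P_n\subset B$, and $C\subset P$ holds by construction.

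The first step is a mild refinement of Lemma 6.6. Looking back at its proof, the perfect set produced there was built out of countably many pairwise disjoint closed intervals $J_n$ satisfying $\mathrm{dist}(x,J_n)\to 0$. Discarding all but the tail lying inside $(x-\delta,x+\delta)$ still leaves enough intervals (those indices still form a set in $\mathcal{F}(\mathcal{I})$) to run the same construction, so the output can be forced into any preassigned neighborhood of the base point. I would therefore pick, for every $n$, a perfect set $P_n$ with $c_n\in P_n\subset B$ and
\[
P_n\subset [c_n-\varepsilon_n,\,c_n+\varepsilon_n], \qquad \varepsilon_n=2^{-n},
\]
so that the diameters of the $P_n$ shrink to zero.

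Showing that $P$ has no isolated points is the easy verification: each $P_n$ is perfect, so any point of $\bigcup P_n$ is non-isolated inside its host $P_n\subset P$; and any point of $\mathrm{cl}(C)\setminus\bigcup P_n$ is a $\mathfrak{T}_U$-limit of $C\subset P$, hence a limit point of $P$.

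The main obstacle is $\mathfrak{T}_U$-closedness of $P$, and this is exactly what forced the diameter control on the $P_n$. Given $x_k\in P$ with $x_k\to x$, I would split into a three-way pigeonhole: (i) if some single $P_{n_0}$ contains infinitely many $x_k$, then closedness of $P_{n_0}$ places $x\in P_{n_0}\subset P$; (ii) if infinitely many $x_k$ lie in $\mathrm{cl}(C)$, then $x\in\mathrm{cl}(C)\subset P$ since $\mathrm{cl}(C)$ is already Euclidean-closed; (iii) otherwise, after passing to a subsequence, $x_k\in P_{n_k}$ with pairwise distinct $n_k$, forcing $n_k\to\infty$ and therefore $|x_k-c_{n_k}|\le \varepsilon_{n_k}\to 0$, so by the triangle inequality $c_{n_k}\to x$, giving $x\in\mathrm{cl}(C)\subset P$. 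In every case $x\in P$, so $P$ is $\mathfrak{T}_U$-closed and thus $\mathfrak{T}_U$-perfect, completing the proof.
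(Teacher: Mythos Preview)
Your proposal is correct and follows essentially the same route as the paper: enumerate $C$, produce localized perfect sets $P_n\subset B\cap[c_n-2^{-n},c_n+2^{-n}]$ containing $c_n$, set $P=\mathrm{cl}(C)\cup\bigcup_n P_n$, and verify perfectness via the same three-case closedness argument. The only tactical difference is how the localization is obtained: the paper applies Lemma~6.6 directly to the Borel set $B_i=B\cap[x_i-2^{-i},x_i+2^{-i}]$ after checking that $\mathcal{I}\text{-}d(x_i,B_i)=1$ (since the intervals $I_n$ eventually sit inside the small neighborhood), whereas you revisit the construction inside Lemma~6.6 and discard a finite initial segment of the $J_n$'s---both yield the same diameter control and the rest of the argument is identical.
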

\begin{proof}Let us take $C=\{x_i: i \in \mathbb{N}\} \subset B$. Now put $B_i=B \cap [x_i-\frac{1}{2^i},x_i+\frac{1}{2^i}]$ for $i \in \mathbb{N}$. Then $B_i$ is a Borel set containing $x_i$ for each $i$. We claim that $\mathcal{I}-d(x_i,B_i)=1$. Now since $\mathcal{I}-d(x_i,B)=1$. So, $\mathcal{I}-\lim$ $b_n=1$ where $b_n=\frac{m(I_n \cap B)}{m(I_n)}$, $\{I_n\}_{n \in \mathbb{N}}$ being a sequence of closed intervals about $x_i$ such that $\mathscr{S}(I_n)\in \mathcal{F}(\mathcal{I})$. Given $\epsilon>0$ let $A_{\epsilon}=\{n:|b_n-1|<\epsilon\}$ then $A_{\epsilon} \in \mathcal{F}(\mathcal{I})$. Since $B_i \subset B$, $\exists n_0 \in \mathbb{N}$ such that $\forall n>n_0$ and $n \in A_{\epsilon}$ we have $m((B\setminus B_i)\cap I_n)=0$. Therefore, $\forall n>n_0$ and $n \in A_{\epsilon}$, $\frac{m(B\cap I_n)}{m(I_n)}=\frac{m(B_i \cap I_n)}{m(I_n)}$. Hence $\mathcal{I}-d(x_i,B)=\mathcal{I}-d(x_i,B_i)$. So, $\mathcal{I}-d(x_i,B_i)=1$ for each $i \in \mathbb{N}$. By Lemma 6.6 there exists a $\mathfrak{T}_U$-perfect set $P_i$ such that $x_i \in P_i \subset B_i$ for each $i \in \mathbb{N}$. 

Now let $P=cl(C) \cup (\bigcup_{i \in \mathbb{N}}P_i)$. Then clearly $C \subset P \subset B$. 

Claim: $P$ is $\mathfrak{T}_U$-perfect.

It is clear that $\bigcup_{i \in \mathbb{N}}P_i$ has no isolated points. Now if $x \in C$. Then $x=x_i$ for some $i$ and $x_i \in P_i$ where $P_i$ is $\mathfrak{T}_U$-perfect. So, $x$ is not an isolated point of $P$. Again if $x\in cl(C)\setminus C$ then there exists a sequence $\{z_n\}_{n\in \mathbb{N}}\in C$ such that $z_n \rightarrow{x}$. Then any open neighbourhood $N(x)$ about $x$ contains some $z_i \neq x$. Consequently, $x$ is not an isolated point of $P$.

So, $P$ has no isolated points. 

Next we are to show $P$ is $\mathfrak{T}_U$-closed. Let $\{s_n\}_{n \in \mathbb{N}}$ be a sequence in $P$ such that $s_n \rightarrow{s}$. To show $s \in P$. We have the following three cases:\\
Case(i): If $cl(C)$ contains infinitely many of $s_n$ then suppose $\{s_{n_k}\}_{k \in \mathbb{N}}$ be a subsequence of $\{s_n\}_{n \in \mathbb{N}}$ such that $\{s_{n_k}\}_{k \in \mathbb{N}} \subset cl(C)$. Since $s_{n_k} \rightarrow{x}$ and $cl(C)$ is $\mathfrak{T}_U$-closed then $s \in cl(C)$. So, $s \in P$.\\
Case(ii): If atleast one of $P_i$ contains infinitely many of $s_n$ then suppose $\{s_{n_k}\}_{k \in \mathbb{N}}$ be a subsequence of $\{s_n\}_{n \in \mathbb{N}}$ such that $\{s_{n_k}\}_{k \in \mathbb{N}} \subset P_i$. Since $s_{n_k} \rightarrow{x}$ and $P_i$ is $\mathfrak{T}_U$-perfect so, $s \in P_i$. Hence $s \in P$.\\
Case(iii): Let there be finitely many $s_n$ in each $P_i$ for $i \in \mathbb{N}$ then without any loss of generality we may assume $s_i \in P_i$ for each $i \in \mathbb{N}$. Since $m(P_i)\leq m(B_i)<\frac{1}{2^{i-1}}$ for each $i \in \mathbb{N}$, therefore, $|x_i-s_i|<\frac{1}{2^{i-1}}$ for each $i \in \mathbb{N}$. Now for each $k \in \mathbb{N}$
\begin{equation*}
    |x_k-s|\leq|x_k-s_k|+|s_k-s|<\frac{1}{2^{k-1}}+|s_k-s|
\end{equation*}
From the above inequality it can be concluded $x_k \rightarrow{s}$ as $k \rightarrow{\infty}$. Therefore, $s \in cl(C)$, since $x_i \in C$ $\forall i$. Thus, $s \in P$.
\end{proof}

\begin{lma} Let $H$ be a Lebesgue measurable set. Then for every $\mathfrak{T}_U$ closed subset $Z$ of $H$ such that $\mathcal{I}-d(x,H)=1$ $\forall x \in Z$ there exists a $\mathfrak{T}_U$-perfect set $P$ such that $Z \subset P \subset H$.
\end{lma}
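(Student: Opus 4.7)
The strategy is to invoke Lemma 6.7 after two preparatory steps: replacing the measurable set $H$ with a suitable Borel subset that still contains $Z$, and slicing off the perfect part of $Z$ via Cantor--Bendixson.

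The first step is a Borel reduction. Since $H \in \mathcal{L}$, I would select an $F_{\sigma}$ set $B' \subset H$ with $m(H \setminus B')=0$ and put $B := Z \cup B'$. Because $Z$ is $\mathfrak{T}_U$-closed, hence Borel, so is $B$, and $Z \subset B \subset H$ with $m(H \setminus B)=0$. The key consequence is that $m(B \cap I) = m(H \cap I)$ for every interval $I$, so the $\mathcal{I}$-density sequences for $B$ and $H$ at every point coincide; in particular $\mathcal{I}-d(x,B) = \mathcal{I}-d(x,H) = 1$ for all $x \in Z$. This arranges that any conclusion we draw inside $B$ automatically lies inside $H$, while the density-one hypothesis is preserved.

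The second step applies Cantor--Bendixson. Since $Z$ is a $\mathfrak{T}_U$-closed subset of the Polish space $(\mathbb{R}, \mathfrak{T}_U)$, by Theorem 6.4 (valid for any closed set, as noted in the remark following it) I can write $Z = P_0 \cup C$ with $P_0$ perfect and $C$ countable. A routine check, using that $Z$ is closed in $\mathbb{R}$, shows that isolation of a point of $P_0$ in its subspace topology inherited from $Z$ is equivalent to isolation in $\mathfrak{T}_U$, so $P_0$ is $\mathfrak{T}_U$-perfect. I would then feed the Borel set $B$ and the countable set $C$ into Lemma 6.7: the hypothesis $cl(C) \subset B$ holds because $C \subset Z$ and $Z$ is $\mathfrak{T}_U$-closed, so $cl(C) \subset Z \subset B$, while $\mathcal{I}-d(x,B) = 1$ for each $x \in C \subset Z$ is provided by the first step. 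The lemma then yields a $\mathfrak{T}_U$-perfect set $P_1$ with $C \subset P_1 \subset B$.

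Setting $P := P_0 \cup P_1$ completes the construction. The inclusions $Z = P_0 \cup C \subset P_0 \cup P_1 = P \subset H$ are immediate; $P$ is $\mathfrak{T}_U$-closed as the union of two closed sets; and no point of $P$ is $\mathfrak{T}_U$-isolated, because any $x \in P_0$ (respectively $P_1$) already fails to be isolated in the perfect set $P_0$ (respectively $P_1$) and hence fails to be isolated in the larger set $P$. The main obstacle I anticipate is the reduction step, where the Borel surrogate $B$ must simultaneously contain $Z$ outright (so that $cl(C) \subset B$ follows for free from $C \subset Z$) and agree with $H$ off a null set (so that the density-one hypothesis transfers); the explicit choice $B = Z \cup B'$ meets both demands, after which the decomposition and the pasting of two perfect sets are routine.
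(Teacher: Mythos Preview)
Your proposal is correct and follows essentially the same approach as the paper: reduce to a Borel set $B = Z \cup B'$ (the paper writes $B = A \cup Z$) via an $F_\sigma$ inner approximation, transfer the $\mathcal{I}$-density-one condition from $H$ to $B$, apply Cantor--Bendixson to $Z$, invoke Lemma~6.7 on the countable part, and take the union of the two perfect sets. Your write-up is in fact slightly more careful than the paper's in justifying why $P_0$ is $\mathfrak{T}_U$-perfect and why $P_0 \cup P_1$ remains perfect.
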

\begin{proof}Since $H$ is a measurable subset of $\mathbb{R}$ so there exists an Euclidean $F_\sigma$ set $A \subset H$ such that $m(H\setminus A)=0$. For $x\in \mathbb{R}$, let $\{I_k\}_{k \in \mathbb{N}}$ be a sequence of closed intervals about $x$ such that $\mathscr{S}(I_k)\in \mathcal{F}(\mathcal{I})$. Let $h_k=\frac{m(H \cap I_k)}{m(I_k)}$ and $a_k=\frac{m(A \cap I_k)}{m(I_k)}$. Then,

\begin{align*}
    h_k
         = \frac{m(H \cap I_{k})}{m(I_{k})}  = \frac{m((A \cup (H \setminus A)) \cap I_k)}{m(I_k)} & = \frac{m(A \cap I_{k})}{m(I_{k})}+\frac{m((H\setminus A) \cap I_{k})}{m(I_{k})}\\
        & = \frac{m(A \cap I_{k})}{m(I_{k})}
         = a_k
\end{align*}
Hence, 
\begin{equation*}
    \mathcal{I}-d(x,H)=\mathcal{I}-\lim h_k=\mathcal{I}-\lim a_k=\mathcal{I}-d(x,A)
\end{equation*}
Therefore, $\mathcal{I}-d(x,A)=1$ $\forall x\in Z$. Since both $A$ and $Z$ are Borel sets so $B=A \cup Z$ is Borel and $Z \subset B \subset H$. Also $\mathcal{I}-d(x,B)=1$ $\forall x\in Z$, since $A \subset B$. Since $(\mathbb{R},\mathfrak{T}_U)$ is a Polish space, so by Theorem 6.4, $Z=P_1 \cup C$ where $P_1$ is $\mathfrak{T}_U$-perfect and $C$ is countable set. Now since $Z$ is $\mathfrak{T}_U$-closed we have $cl(C)\subset Z \subset B$ and $\mathcal{I}-d(x,B)=1$ $\forall x \in C$. By Lemma 6.7 there exists a $\mathfrak{T}_U$-perfect set $P_2$ such that $C \subset P_2 \subset B$. Therefore $P_1 \cup C \subset P_1 \cup P_2 \subset B$. Take $P=P_1 \cup P_2$. Then $P$ is $\mathfrak{T}_U$-perfect and $Z\subset P \subset B \subset H$.
\end{proof}

Now we prove an analogue of Lusin-Menchoff Theorem for $\mathcal{I}$-density.

\begin{thm} Let $H$ be a measurable set. Then for every $\mathfrak{T}_U$ closed set $Z$ such that $Z \subset H$ and $\mathcal{I}-d(x,H)=1$ $\forall x \in Z$ there exists a $\mathfrak{T}_U$ perfect set $P$ such that $Z \subset P \subset H$ and $\mathcal{I}-d(x,P)=1$ $\forall x \in Z$
\end{thm}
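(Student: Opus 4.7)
My plan is a three-step construction: reduce $H$ to an $F_\sigma$ subset $A$ of full measure; decompose $\mathbb{R}\setminus Z$ into its open connected components $J_n$ and build inside each $J_n$ a $\mathfrak{T}_U$-perfect subset $P_n$ of $A\cap J_n$ with small global measure deficit and one-sided classical density $1$ at the $Z$-endpoints of $J_n$; and then set $P = Z\cup\bigcup_n P_n$.

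First, as in the proof of Lemma 6.8, pick an Euclidean $F_\sigma$ set $A\subset H$ with $m(H\setminus A)=0$; then $\mathcal{I}-d(z,A)=1$ for every $z\in Z$. Since $Z$ is $\mathfrak{T}_U$-closed, write $\mathbb{R}\setminus Z=\bigsqcup_n J_n$ where each $J_n=(a_n,b_n)$ is an open interval whose bounded endpoints lie in $Z$. For each $n$ I want a $\mathfrak{T}_U$-perfect $P_n\subset A\cap J_n$ with $\overline{P_n}\subset\overline{J_n}$, such that (i) $m((A\cap J_n)\setminus P_n)\le m(J_n)^2$, and (ii) for each endpoint $e\in\{a_n,b_n\}\cap Z$ the set $(A\cap J_n)\setminus P_n$ has \emph{classical} one-sided Lebesgue density $0$ at $e$ from inside $J_n$. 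To build such a $P_n$, for each $k\in\mathbb{N}$ and each endpoint $e$, inner regularity of Lebesgue measure provides a closed $F_{n,k}^e\subset A\cap J_n$ contained in the shell of radius $m(J_n)/k$ at $e$ with measure deficit $\le m(J_n)/k^2$; taking the closure in $\overline{J_n}$ of the union of all such shells together with a closed subset of $A$ in the middle of $J_n$ of total deficit $\le m(J_n)^2/2$, and then passing to its perfect kernel, yields $P_n$. The positivity of $A$-measure in arbitrarily small shells, required for the shells to accumulate at $e$ (so $e$ becomes a limit point of $P_n$), follows by applying $\mathcal{I}-d(e,A)=1$ to one-sided interval sequences $[e-1/(k+2),e]$.

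Now set $P=Z\cup\bigcup_n P_n$. Clearly $Z\subset P\subset H$, and $P$ is $\mathfrak{T}_U$-closed because in any bounded region only finitely many $J_n$ have length bounded below, so limits of $\bigcup_n P_n$ lying outside $\bigcup_n\overline{P_n}$ are forced into $Z$; and $P$ has no isolated points since each $P_n$ is perfect while every $z\in Z$ is approached either by other points of $Z$ (applying Cantor--Bendixon to $Z$ as in Lemma 6.8) or by the shells $F_{n,k}^e$ adjacent to $z$. For $z\in Z$ it suffices, using the hypothesis $\mathcal{I}-d(z,H)=1$, to verify that $\mathcal{I}-\limsup m((H\setminus P)\cap I_k)/m(I_k)=0$ for every sequence $\{I_k\}$ of closed intervals about $z$ with $\mathscr{S}(I_k)\in\mathcal{F}(\mathcal{I})$. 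Since $m(H\setminus A)=0$,
\[
m\bigl((H\setminus P)\cap I_k\bigr) \ = \ \sum_n m\bigl((A\cap J_n\setminus P_n)\cap I_k\bigr),
\]
and splitting according to whether $J_n\subset I_k$ or $J_n$ merely straddles an endpoint of $I_k$, the contained terms sum to at most $\sum_{J_n\subset I_k} m(J_n)^2\le m(I_k)^2$ by (i); for $I_k$ sufficiently short (which holds for all but $\mathcal{I}$-many $k$, since $m(I_k)<1/k$ on $\mathscr{S}(I_k)$), the only $J_n$ straddling an endpoint of $I_k$ are the at most two adjacent to $z$, and each of their contributions is $o(m(I_k))$ by (ii). Dividing by $m(I_k)$ gives the required $\mathcal{I}-\limsup 0$. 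The principal technical obstacle is this construction of each $P_n$: a single $\mathfrak{T}_U$-perfect set must simultaneously carry the quadratic global deficit bound (i) and the one-sided classical density (ii) at each $Z$-endpoint, and the shell-plus-middle construction above is the key vehicle.
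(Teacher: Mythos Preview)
There is a genuine gap in the straddling step. Your assertion that for $I_k$ sufficiently short ``the only $J_n$ straddling an endpoint of $I_k$ are the at most two adjacent to $z$'' fails whenever $z$ is an accumulation point of $Z$. Take $Z=\{0\}\cup\{1/n:n\ge1\}$ and $z=0$: no component is adjacent to $0$ on the right, yet with $I_k=[0,\tfrac{1}{k}-\tfrac{1}{3k^2}]$ one has $\mathscr{S}(I_k)=\mathbb{N}$ and the right endpoint of $I_k$ lies in the component $(\tfrac{1}{k+1},\tfrac{1}{k})$, which straddles $I_k$ for every $k$ without ever being adjacent to $0$. For such a straddling $J_n$ your condition (ii) is a one-sided density statement at the endpoint $a_n$ of $J_n$, not at $z$; since the index $n$ changes with $k$ and your shell construction at relative scales $m(J_n)/k$ gives no uniformity of the little-$o$ rate in (ii) across $n$, you cannot conclude that this contribution is $o(m(I_k))$. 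A secondary issue: taking the closure in $\overline{J_n}$ of a union of closed subsets of $A$ may add limit points lying outside $A$ and even outside $H$, so $P_n\subset H$ is not guaranteed as written.

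The paper sidesteps both problems by decomposing $H\setminus Z$ not into components of $\mathbb{R}\setminus Z$ but into distance shells $H_n=\{x\in H:\tfrac{1}{n+1}<dist(x,Z)\le\tfrac{1}{n}\}$, picking perfect $P_n\subset H_n$ with $m(H_n\setminus P_n)<2^{-(n+1)}$ by inner regularity and Cantor--Bendixson, and setting $P=K\cup\bigcup_n P_n$ where $K$ is the perfect set from Lemma~6.8. Because any short interval $I_k$ about $z\in Z$ can meet only shells with $n\ge n_k$ where $m(I_k)>\tfrac{1}{n_k+1}$, one gets the uniform bound $m(I_k\cap(H\setminus P))\le 2^{-n_k}$ and hence $m(I_k\cap(H\setminus P))/m(I_k)\le(n_k+1)2^{-n_k}\to0$, with no dependence on which components are involved. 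Your component approach can be repaired if you replace (ii) by a \emph{uniform} quantitative bound such as $m\bigl((A\cap J_n\setminus P_n)\cap[e,e+h]\bigr)\le h^{2}$ for every $n$, every $Z$-endpoint $e$ of $J_n$, and every $h>0$; this forces you to build the shells inside each $J_n$ at absolute scales $1/k$ rather than relative scales $m(J_n)/k$.
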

\begin{proof}By hypothesis and Lemma 6.8 there exists $\mathfrak{T}_U$-perfect set $K$ such that $Z \subset K \subset H$. Now define,
\begin{center}
    $H_n=\{z \in H: \frac{1}{n+1}< dist(z,Z) \leq \frac{1}{n}\}$ for $n \in \mathbb{N}$
\end{center}
and let
\begin{center}
    $H_0=\{z \in H: dist(z,Z)>1\}$
\end{center}
Then, $H=Z \cup (\bigcup_{n=0}^{\infty}H_n)$. Without any loss of generality let us assume that each $H_n$ is nonempty. Since $dist$ function is continuous so $H_n$'s are measurable for each $n \in \mathbb{N} \cup \{0\}$. So for every $n\in \mathbb{N} \cup \{0\}$ we can find a closed set $F_n \subset H_n$ such that $m(H_n \setminus F_n)<\frac{1}{2^{n+1}}$. By Cantor Bendixon theorem since every closed set can be expressed as a union of a perfect set and a countable set, for each $n$ there exists $\mathfrak{T}_U$-perfect set $P_n \subset F_n \subset H_n$ such that $m(H_n \setminus P_n)<\frac{1}{2^{n+1}}$. Put,
\begin{center}
    $P=K \cup (\bigcup_{n=1}^{\infty}P_n)$
\end{center}
Then $P$ is nonempty $\mathfrak{T}_U$-perfect set such that $Z \subset P \subset H$.\\

Now we are to show that $\mathcal{I}-d(x,P)=1$ $\forall x \in Z$.

For $x \in Z$, by hypothesis $\mathcal{I}-d(x,H)=1$. Let $\{I_k\}_{k \in \mathbb{N}}$ be a sequence of closed intervals about $x$ such that $\mathscr{S}(I_k) \in \mathcal{F(\mathcal{I})}$. Then 
    $\mathcal{I}-\lim_{k}\frac{m(I_k \cap H)}{m(I_k)}=1$
Since $H=Z \cup (\bigcup_{n=0}^{\infty}H_n)$. So
\begin{align*}
    H \setminus P  & = (Z \setminus P)\cup (\{\bigcup_{n=0}^{\infty}H_n\}\setminus P)
   = \{\bigcup_{n=0}^{\infty}H_n\}\setminus P \\
  & = \bigcup_{n=0}^{\infty}(H_n \setminus P)
  = \bigcup_{n=0}^{\infty}H_n \setminus (K \cup \bigcup_{m=1}^{\infty}P_m)\\ 
   & = \bigcup_{n=0}^{\infty}((H_n \setminus K) \cap (H_n \setminus \bigcup_{m=1}^{\infty}P_m))\\
 & = \bigcup_{n=0}^{\infty}((H_n \setminus K) \cap (\bigcap_{m=1}^{\infty}(H_n \setminus P_m)))
\end{align*}
From here $k$ will be chosen in $\mathscr{S}(I_k)$. So we have
\begin{equation}
    I_k \cap (H \setminus P) = \bigcup_{n=0}^{\infty}(I_k \cap (H_n \setminus K) \cap (\bigcap_{m=1}^{\infty}(H_n \setminus P_m)))
\end{equation}
Now for a fixed $k$ there are two possibilities:
\begin{enumerate}
        \item $\exists \ n_k \in \mathbb{N}$ such that 
$I_k \cap H_n = \phi$ for $n<n_k$ but $I_k \cap H_{n_k} \neq \phi$
        \item $I_k \cap H_n = \phi$ $\forall$ $n$. In this case we put $n_k=\infty$.
\end{enumerate}

For case $(2)$ the R.H.S. in (6.3) is empty. So $m(I_k \cap (H \setminus P))=0$. Therefore, $\frac{m(I_k \cap H)}{m(I_k)}=\frac{m(I_k \cap P)}{m(I_k)}$. Hence,\begin{center}
    $\mathcal{I}-d(x,P)=\mathcal{I}-\lim_{k} \frac{m(I_k \cap P)}{m(I_k)}=\mathcal{I}-\lim_{k} \frac{m(I_k \cap H)}{m(I_k)}=1$
\end{center}

For case $(1)$ from (6.3) we have
\begin{equation}
    I_k \cap (H \setminus P)  = \bigcup_{n=n_k}^{\infty}(I_k \cap (H_n \setminus K) \cap (\bigcap_{m=1}^{\infty}(H_n \setminus P_m)))
\end{equation}

\begin{equation}
\begin{split}
    m(I_k \cap (H \setminus P))
    & \leq \sum_{n=n_k}^{\infty}m(I_k \cap (H_n \setminus K) \cap (\bigcap_{m=1}^{\infty}(H_n \setminus P_m)))\\
    & \leq \sum_{n=n_k}^{\infty}m(H_n \setminus P_n)\\
    & < \sum_{n=n_k}^{\infty} \frac{1}{2^{n+1}} = \frac{1}{2^{n_k}}
\end{split}
\end{equation}

Now we will consider the following two subcases:\\
Subcase $(i)$: Let us assume for each $k$, $n_k < \infty$. We claim that as $k \rightarrow{\infty}$ then $n_k \rightarrow{\infty}$. To show given $N \in \mathbb{N}$ there exists $k_0 \in \mathbb{N}$ such that if $k>k_0$ then $n_k>N$.\\

For given any large $N \in \mathbb{N}$ let $k_0=N+1$. If $k>k_0$ then $m(I_k)<\frac{1}{k}<\frac{1}{k_0}$. Also $I_k \cap H_{n_k} \neq \phi$. Let $y \in I_k \cap H_{n_k}$. 
\begin{equation}
    \mbox{Since} \ x,y \in I_k \ \mbox{so} \  |x-y|<m(I_k)<\frac{1}{k_0}
\end{equation}  
Moreover since
\begin{equation}
     x \in Z \ \mbox{and} \ y \in H_{n_k}, \ |x-y| \geq dist(H_{n_k},Z) > \frac{1}{n_k+1}
\end{equation}
From equation (6.6) and (6.7) we have $\frac{1}{n_k+1}<\frac{1}{k_0}=\frac{1}{N+1}$ which implies that $n_k>N$. Also note that $m(I_k)>\frac{1}{n_k+1}$ since $\frac{1}{n_k+1}<|x-y|<m(I_k)$ by (6.6).\\
Now for $k>k_0$,
\begin{equation}
    \begin{split}
    \frac{m(I_k \cap H)}{m(I_k)}
    & = \frac{m(I_k \cap P)}{m(I_k)}+\frac{m(I_k \cap (H \setminus P))}{m(I_k)}\\
    & < \frac{m(I_k \cap P)}{m(I_k)}+\frac{n_k+1}{2^{n_k}}
\end{split}
\end{equation}
Therefore, $\mathcal{I}-\lim_k \frac{m(I_k \cap H)}{m(I_k)} < \mathcal{I}-\lim_k \frac{m(I_k \cap P)}{m(I_k)}$ by (6.8). So $\mathcal{I}-d(x,P)>1$. Hence $\mathcal{I}-d(x,P)=1$.

Subcase $(ii)$: For infinitely many $k$ let $n_k=\infty$. So let there exists a subsequence $\{k_l\}$ of $\{k\}$ such that $n_{k_l}=\infty$ and  $k_l \rightarrow{\infty}$ as $l \rightarrow{\infty}$. So, $I_{k_l} \cap H_n = \phi$ $\forall n$. Hence $m(I_{k_l} \cap (H \setminus P))=0$. So, $\frac{m(I_{k_l} \cap H)}{m(I_{k_l})}=\frac{m(I_{k_l} \cap P)}{m(I_{k_l})}$. Thus,
\begin{center}
    $\mathcal{I}-\lim_l \frac{m(I_{k_l} \cap H)}{m(I_{k_l})}=\mathcal{I}-\lim_l \frac{m(I_{k_l} \cap P)}{m(I_{k_l})}=1$
\end{center}
Hence, the Theorem is proved.
\end{proof}

\section{Some separation axioms}\label{section 7}
The purpose of this section is to provide some information about separation axioms for the space $(\mathbb{R},\mathfrak{T}_\mathcal{I})$. Since by Theorem 4.5, $\mathfrak{T}_U \subset \mathfrak{T}_\mathcal{I}$ we obtain immediately the following result.

\begin{prop}
The space $(\mathbb{R},\mathfrak{T}_\mathcal{I})$ is a Hausdorff space.
\end{prop}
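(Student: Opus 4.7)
The plan is to deduce the Hausdorff property of $(\mathbb{R}, \mathfrak{T}_\mathcal{I})$ as a direct consequence of Theorem 4.5, which asserts that $\mathfrak{T}_U \subset \mathfrak{T}_\mathcal{I}$. The general topological fact in play is that any topology finer than a Hausdorff topology on the same underlying set is itself Hausdorff, since separating neighbourhoods in the coarser topology remain open (and still separating) in the finer one.

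Concretely, I would proceed as follows. Take any two distinct points $x, y \in \mathbb{R}$. Because $(\mathbb{R}, \mathfrak{T}_U)$ is Hausdorff (being a metric space under the usual Euclidean metric), there exist $U, V \in \mathfrak{T}_U$ with $x \in U$, $y \in V$, and $U \cap V = \emptyset$. For instance, with $\delta = |x-y|/2 > 0$ one may take $U = (x-\delta, x+\delta)$ and $V = (y-\delta, y+\delta)$. By Theorem 4.5, every member of $\mathfrak{T}_U$ is $\mathcal{I}$-$d$ open, so $U, V \in \mathfrak{T}_\mathcal{I}$. These are therefore disjoint $\mathfrak{T}_\mathcal{I}$-open neighbourhoods of $x$ and $y$, which establishes the $T_2$ separation axiom for $(\mathbb{R}, \mathfrak{T}_\mathcal{I})$.

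There is no genuine obstacle here; the entire content of the proposition is encapsulated in the inclusion $\mathfrak{T}_U \subset \mathfrak{T}_\mathcal{I}$ already proved. The only (trivial) step worth spelling out is the appeal to the finer-topology principle, after which the result is immediate.
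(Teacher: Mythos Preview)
Your proposal is correct and follows exactly the approach indicated in the paper, which simply remarks that the result is immediate from the inclusion $\mathfrak{T}_U \subset \mathfrak{T}_\mathcal{I}$ established in Theorem 4.5. You have merely spelled out the standard finer-topology argument that the paper leaves implicit.
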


In the next theorem we obtain a bounded $\mathcal{I}-\mathbb{A}\mathbb{C}$ function. Given any two sets $A$ and $B$ we use the notation $A \subset\bullet \ B$ to mean $A \subset B$ and $\mathcal{I}-d(x,B)=1$ $\forall x \in A$ (cf. \cite{BRUCKNER}).

\begin{thm} Let $H$ be a set of type Euclidean $F_\sigma$ such that $\mathcal{I}-d(x,H)=1$ $\forall x \in H$. Then there exists an $\mathcal{I}-\mathbb{A}\mathbb{C}$ function $g:\mathbb{R} \rightarrow{\mathbb{R}}$ such that 
\begin{center}
    $(1)$ $0<g(x)\leq 1$ for $x \in H$\\
    $(2)$ $g(x)=0$   \quad \ \ for $x \notin H$
\end{center}
\end{thm}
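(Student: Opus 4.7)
Since $H$ is of type Euclidean $F_\sigma$, write $H=\bigcup_{n\in\N}F_n$ with each $F_n$ closed in $\mathfrak{T}_U$ and $F_n\subseteq F_{n+1}$. The plan is to build an increasing sequence of $\mathfrak{T}_U$-perfect subsets $P_1\subseteq P_2\subseteq\cdots\subseteq H$ with $\bigcup_n P_n=H$ and strong $\mathcal{I}$-density nesting between consecutive $P_n$'s, and then take $g$ to be a weighted indicator series. Inductively, set $P_0=\emptyset$; given $P_{n-1}$, let $Z_n=P_{n-1}\cup F_n$, a $\mathfrak{T}_U$-closed subset of $H$ on which $\mathcal{I}-d(x,H)=1$ (by the hypothesis on $H$). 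Theorem 6.9 applied to $Z_n$ inside $H$ produces a perfect $P_n$ with $Z_n\subseteq P_n\subseteq H$ and $\mathcal{I}-d(x,P_n)=1$ for every $x\in Z_n$. The $P_n$ are increasing, each contains $F_n$, and each lies in $H$, so $\bigcup_n P_n=H$.

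Define $g(x)=\sum_{n=1}^{\infty}2^{-n}\chi_{P_n}(x)$, a function $g:\mathbb{R}\to[0,1]$. For $x\notin H$, $x\notin P_n$ for every $n$, so $g(x)=0$, giving (2). For $x\in H$, set $k(x):=\min\{n:x\in P_n\}$; since $\{P_n\}$ is increasing, $g(x)=\sum_{n\ge k(x)}2^{-n}=2^{1-k(x)}\in(0,1]$, giving (1). Continuity of $g$ at $x\notin H$ reduces to a purely $\mathfrak{T}_U$-estimate: each $P_n$ is closed and $x\notin P_n$, so $\mathrm{dist}(x,P_n)>0$; given $\varepsilon>0$, choose $N$ with $2^{-N}<\varepsilon$, then for $|y-x|<\mathrm{dist}(x,P_N)$ we have $y\notin P_1\cup\cdots\cup P_N$, hence $g(y)\le 2^{-N}<\varepsilon$. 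Thus $g$ is $\mathfrak{T}_U$-continuous at every $x\notin H$, and therefore $\mathcal{I}$-approximately continuous there.

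For $x\in H$ with $k=k(x)$, the set $P_{k-1}$ is closed and $x\notin P_{k-1}$, so $\mathcal{I}-d(x,P_{k-1})=0$. In the good case $x\in F_k$ (so $x\in Z_k$), Theorem 6.9 gives $\mathcal{I}-d(x,P_k)=1$, and by Lemma 2.11 the set $E=(P_k\setminus P_{k-1})\cup\{x\}$ satisfies $\mathcal{I}-d(x,E)=\mathcal{I}-d(x,P_k)-\mathcal{I}-d(x,P_{k-1})=1$; on $E$ every $y\neq x$ has $k(y)=k$ so $g(y)=2^{1-k}=g(x)$, making $g|_E$ constant and continuous at $x$, so $g$ is $\mathcal{I}$-approximately continuous at $x$. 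The main obstacle is the remaining case of \emph{Lusin-Menchoff extras} $x\in P_k\setminus(F_k\cup P_{k-1})$, at which Theorem 6.9 does not a priori give $\mathcal{I}-d(x,P_k)=1$. My proposed resolution is to refine each $P_n$ to $\tilde{P}_n=P_n\cap\Theta_{\mathcal{I}}(P_n)$---which by Theorem 3.2 differs from $P_n$ by a Lebesgue-null set and lies in $\mathfrak{T}_{\mathcal{I}}$ (so $\mathcal{I}-d(x,\tilde{P}_n)=1$ holds automatically for every $x\in\tilde{P}_n$)---and to rerun the induction using the $\mathfrak{T}_U$-closure of $\tilde{P}_{n-1}$ in place of $P_{n-1}$ inside $Z_n$ so that the closedness hypothesis of Theorem 6.9 is preserved. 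Verifying that this refined chain still satisfies $\mathcal{I}-d(x,\tilde{P}_{k-1})=0$ at the minimal index $k$ with $x\in\tilde{P}_k$ (the condition needed for the good-case argument to apply uniformly on all of $H$) is the delicate technical step.
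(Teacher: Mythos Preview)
Your construction runs into exactly the difficulty you yourself flag, and your proposed repair does not close the gap. The problem with the ``Lusin--Menchoff extras'' $x\in P_k\setminus (P_{k-1}\cup F_k)$ is genuine: Theorem~6.9 only guarantees $\mathcal{I}\mbox{-}d(z,P_k)=1$ for $z$ in the closed set $Z_k=P_{k-1}\cup F_k$ that was fed in, not for every $z\in P_k$. Your fix of passing to $\tilde P_n=P_n\cap\Theta_{\mathcal{I}}(P_n)$ does give $\mathcal{I}\mbox{-}d(x,\tilde P_k)=1$ for $x\in\tilde P_k$, but now the other half of the argument collapses: you still need $\mathcal{I}\mbox{-}d(x,\tilde P_{k-1})=0$ at the minimal index, and ``$x\notin\tilde P_{k-1}$'' no longer implies this, since $\tilde P_{k-1}$ is not $\mathfrak{T}_U$-closed and $x$ may well lie in $P_{k-1}\setminus\tilde P_{k-1}$ (a point where the density of $P_{k-1}$ exists but is strictly between $0$ and $1$, or fails to exist). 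Re-running the induction with $\overline{\tilde P_{n-1}}$ in place of $P_{n-1}$ does not help, because $\overline{\tilde P_{n-1}}$ can recover exactly those bad boundary points. The ``delicate technical step'' you leave open is not a detail; it is the heart of the matter, and with a discrete chain $P_1\subset P_2\subset\cdots$ there is no evident way to arrange simultaneously that each $P_k$ is closed (needed for the $0$-density side) and that every point of $P_k$ sees $P_k$ with full $\mathcal{I}$-density (needed for the $1$-density side).

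The paper sidesteps this by building a \emph{continuum}-indexed family $\{Q_\beta\}_{\beta\ge 1}$ of $\mathfrak{T}_U$-closed sets with $Q_{\beta_1}\subset\bullet\,Q_{\beta_2}$ whenever $\beta_1<\beta_2$ (the notation means $\mathcal{I}\mbox{-}d(x,Q_{\beta_2})=1$ for all $x\in Q_{\beta_1}$), obtained from the integer chain by repeated dyadic interpolation via Theorem~6.9, and then sets $g(x)=1/\inf\{\beta:x\in Q_\beta\}$. The point is that for $x\in H$ with $g(x)=1/\lambda$ one has $x\in Q_{\lambda+\delta}$ for every $\delta>0$, and then $Q_{\lambda+\delta}\subset\bullet\,Q_{\lambda+2\delta}$ gives $\mathcal{I}\mbox{-}d(x,Q_{\lambda+2\delta})=1$ \emph{without} ever needing $x$ itself to be a full-density point of the set it first enters. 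Upper semicontinuity comes from the closedness of $Q_\lambda$ for $\lambda$ just below. In short, the Urysohn-type continuous scale decouples the two requirements that clash in your discrete scheme.
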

\begin{proof}If $H=\phi$ then $g(x)=0$ $\forall x \in \mathbb{R}$ and so $g$ is $\mathcal{I}-\mathbb{A}\mathbb{C}$. Let $H$ be a nonempty Euclidean $F_{\sigma}$ set. So, $H=\bigcup_{n=1}^{\infty}K_n$ where each $K_n$ is nonempty $\mathfrak{T}_U-$closed set. Now we construct a family of $\mathfrak{T}_U$ closed sets $\{Q_\beta: \beta \in \mathbb{R} \ \mbox{and} \ \beta \geq 1\}$ such that $Q_{\beta_1} \subset\bullet \  Q_{\beta_2}$ if $\beta_1 < \beta_2$ and $H=\bigcup_{\beta \geq 1}Q_{\beta}$.

Let $Q_1=K_1$. Since $Q_1 \subset H$ where $H$ is measurable and $Q_1$ is $\mathfrak{T}_U$-closed set and $\mathcal{I}-d(x,H)=1$ $\forall x \in Q_1$. So by Theorem 6.9 $\exists$ $\mathfrak{T}_U$ closed set $B_2$ such that $Q_1 \subset B_2 \subset H$ and $\mathcal{I}-d(x,B_2)=1$ $\forall x \in Q_1$. So $Q_1 \subset\bullet \ B_2 \subset\bullet \ H$. Now take $Q_2=K_2 \cup B_2$. Then $Q_1 \subset\bullet \ Q_2 \subset\bullet \ H$. We proceed inductively. Suppose $\exists$ $\mathfrak{T}_U$ closed set $Q_n$ satisfying $Q_{n-1} \subset\bullet \ Q_n \subset\bullet \ H$ and $K_n \subset Q_n$. Then by Theorem 6.9, $\exists$ $\mathfrak{T}_U$ closed set $B_{n+1}$ such that $Q_{n} \subset\bullet \ B_{n+1} \subset\bullet \ H$. Let $Q_{n+1}=K_{n+1} \cup B_{n+1}$. Then $Q_{n} \subset\bullet \ Q_{n+1} \subset\bullet \ H$ and $K_{n+1} \subset Q_{n+1}$. By induction we obtain the collection $\{Q_n\}_{n \in \mathbb{N}}$ such that $K_n \subset Q_n$ $\forall n \in \mathbb{N}$ and $Q_n \subset H$ $\forall n \in \mathbb{N}$. Therefore, 
\begin{equation}
    H=\bigcup_{n \in \mathbb{N}}Q_n
\end{equation}.

Now by Theorem 6.9 for each $l \in \mathbb{N} \cup \{0\}$ and $n \geq 2^l$ we define a $\mathfrak{T}_U-$closed set $Q_{\frac{n}{2^l}}$ such that
\begin{equation}
    Q_{\frac{n}{2^l}} \subset\bullet \ Q_{\frac{(n+1)}{2^l}}
\end{equation}. So we have the following:

For $l=0$ we get $Q_1 \subset\bullet \ Q_2 \subset\bullet \ Q_3 \subset\bullet \ \cdots$

For $l=1$ we get $Q_1 \subset\bullet \ Q_{\frac{3}{2}} \subset\bullet \ Q_2 \subset\bullet \ Q_{\frac{5}{2}} \subset\bullet \ Q_3 \subset\bullet \ \cdots$

For $l=2$ we get $Q_1 \subset\bullet \ Q_{\frac{5}{4}} \subset\bullet \ Q_{\frac{3}{2}} \subset\bullet \ Q_{\frac{7}{4}} \subset\bullet \ Q_2 \subset\bullet \ Q_{\frac{9}{4}} \subset\bullet \ Q_{\frac{5}{2}} \subset\bullet \ \cdots$

and so on. 

Suppose for fixed $l_0$ we choose $Q_{\frac{n}{2^{l_0}}}$ $\forall n \geq 2^{l_0}$ such that $Q_{\frac{n}{2^{l_0}}} \subset\bullet \ Q_{\frac{(n+1)}{2^{l_0}}}$. Since $Q_{\frac{n}{2^{l_0}}}=Q_{\frac{2n}{2^{l_0+1}}}$. So by (7.2) and Theorem 6.9 we have $Q_{\frac{2n}{2^{l_0+1}}} \subset\bullet \ Q_{\frac{2n+1}{2^{l_0+1}}}$ and $Q_{\frac{2n+1}{2^{l_0+1}}} \subset\bullet \ Q_{\frac{2n+2}{2^{l_0+1}}}$. 

Therefore, $Q_{\frac{n}{2^{l_0}}} \subset\bullet \ Q_{\frac{2n+1}{2^{l_0+1}}} \subset\bullet \ Q_{\frac{(n+1)}{2^{l_0}}}$. In particular we get 
\begin{equation*}
    Q_1 \subset\bullet \ \cdots \subset\bullet \ Q_{\frac{9}{8}} \subset\bullet \ \cdots \subset\bullet \  Q_{\frac{5}{4}} \subset\bullet \ \cdots \subset\bullet \ Q_{\frac{3}{2}}  \subset\bullet \ \cdots  \subset\bullet \ Q_{\frac{7}{4}} \subset\bullet \ \cdots \subset\bullet \ Q_{\frac{15}{8}} \cdots \subset\bullet \ Q_2 \cdots
\end{equation*} 

For each real number $\beta \geq 1$ we define
\begin{equation*}
    Q_{\beta}=\bigcap_{\frac{n}{2^l} \geq \beta}Q_{\frac{n}{2^l}}
\end{equation*}.

Moreover since each $Q_{\frac{n}{2^l}}$ is $\mathfrak{T}_U-$closed so $Q_{\beta}$ is $\mathfrak{T}_U-$closed. Now if $\beta_1 < \beta_2$ we can choose sufficiently large $l_0$ so that for some $n_0 \in \mathbb{N}$ we have $2^{l_0} \beta_1 < n_0 < (n_0+1) < 2^{l_0} \beta_2$. Observe that $Q_{\frac{(n_0+1)}{2^{l_0}}} \subset Q_{\frac{n}{2^{l}}}$ $\forall \frac{n}{2^{l}} \geq \beta_2$. Hence $Q_{\frac{(n_0+1)}{2^{l_0}}} \subset \bigcap_{\frac{n}{2^l} \geq \beta_2}Q_{\frac{n}{2^{l}}}=Q_{\beta_2}$. So, $Q_{\beta_1} \subset Q_{\frac{n_0}{2^{l_0}}} \subset\bullet \ Q_{\frac{(n_0+1)}{2^{l_0}}} \subset Q_{\beta_2}$. Consequently, $Q_{\beta_1} \subset\bullet \ Q_{\beta_2}$. Thus 
\begin{equation*}
    H=\bigcup_{\beta \geq 1}Q_{\beta}
\end{equation*}

We define $g: \mathbb{R} \rightarrow{\mathbb{R}}$ where
\begin{equation}
    g(x) = \left\{ \begin{array}{rcl}
\frac{1}{\inf \{\beta: x \in Q_{\beta}\}} & \mbox{if}
& x \in H \\ 0 & \mbox{if} & x \notin H
\end{array}\right.
\end{equation}
Since $\beta \geq 1$ so $g(x) \leq 1$ $\forall x \in \mathbb{R}$. Now take $x \in H$. From (7.1) we see $\exists$ some $n_0 \in \mathbb{N}$ such that $x \in Q_{n_0}$. So, $\inf \{\beta: x \in Q_{\beta}\} \leq n_0$ which means $g(x) \geq \frac{1}{n_0} > 0$. So, $0<g(x) \leq 1$ $\forall x \in H$.

Next we are to prove $g$ is $\mathcal{I}-\mathbb{A}\mathbb{C}$. For that we first show, $g$ is continuous on $H^c$. Let $x_0 \in H^c$. So by (7.1) $x_0 \in Q_n^c$ $\forall n$. Say $x_0 \in Q_N^c$ for some $N \in \mathbb{N}$. Since $Q_N$ is $\mathfrak{T}_U$ closed $\exists$ $\delta>0$ such that $Q_N \cap (x_0 - \delta, x_0 + \delta)=\phi$. Now since $Q_{\beta_1} \subset Q_{\beta_2}$ for $\beta_1 < \beta_2$ therefore for $\beta \leq N$ we get $Q_\beta \cap (x_0 - \delta, x_0 + \delta)=\phi$. Thus if $\beta \leq N$ and $x \in (x_0 -\delta, x_0 + \delta)$ then $x \in Q_\beta^c$. Thus $\inf \{\beta: x \in Q_\beta\} \geq N$ and so $g(x) \leq \frac{1}{N}$ for $x \in (x_0 - \delta, x_0 + \delta)$. Since choice of $N$ is arbitrary so $g(x_0)=0$ $\forall x_0 \in H^c$. So, $g$ is continuous on $H^c$.

Now we prove $g$ is upper semi-continuous at any $x_0 \in H$. Let $g(x_0)=\frac{1}{\lambda^{'}}$. Then for $\lambda < \lambda^{'}$ we observe $x_0 \notin Q_{\lambda}$. Since $Q_{\lambda}$ is $\mathfrak{T}_U$ closed so for sufficiently small $\delta>0$ we have $(x_0-\delta,x_0+\delta) \subset Q_{\lambda}^{c}$. Thus for any $x \in (x_0-\delta,x_0+\delta)$ since $\inf\{\beta:x \in Q_{\beta}\}>\lambda$ we have  $g(x)-g(x_0)< \frac{1}{\lambda}-\frac{1}{\lambda^{'}}$. So we are done.

Now to show $g$ is $\mathcal{I}$-approximately lower semi-continuous at points $x \in H$. Let $x_0 \in H$ and suppose $g(x_0)=\frac{1}{\lambda}$. For any $\alpha < g(x_0)$ let $C_{\alpha}=\{x: g(x)>\alpha\}$. It is enough to show $\mathcal{I}-d(x_0, C_{\alpha})=1$. Since $\alpha <\frac{1}{\lambda}$ there exists $\delta>0$ such that $\alpha<\frac{1}{\lambda + 2 \delta}< \frac{1}{\lambda}$. Now we observe $\lambda = \inf\{\beta: x_0 \in Q_{\beta}\}$. So clearly $x_0 \in Q_{\lambda + \delta}$. From the properties of the family $\{Q_{\beta}: \beta \geq 1\}$ we have $Q_{\lambda + \delta} \subset\bullet \ Q_{\lambda + 2\delta}$. Therefore $\mathcal{I}-d(x_0, Q_{\lambda + 2\delta})=1$. We claim that $Q_{\lambda + 2\delta} \subset C_{\alpha}$. For any $x \in Q_{\lambda + 2\delta}$ implies $\inf \{\beta: x \in Q_{\beta}\} \leq \lambda + 2 \delta$ which means $g(x) \geq \frac{1}{\lambda + 2 \delta}$. Since $\frac{1}{\lambda + 2 \delta} > \alpha$ so $g(x)>\alpha$. Consequently $x \in C_{\alpha}$. Hence $Q_{\lambda + 2\delta} \subset C_{\alpha}$. So, $\mathcal{I}-d(x_0,C_{\alpha})=1$. Hence, $g$ is $\mathcal{I}$-approximately lower semi-continuous.

 Thus $g$ is $\mathcal{I}-\mathbb{A}\mathbb{C}$ function.
\end{proof}

We now show $(\mathbb{R},\mathfrak{T}_\mathcal{I})$ is completely regular. To prove this theorem we need the following lemma.

\begin{lma} Let $P_1, P_2, G$ be pairwise disjoint subsets of $\mathbb{R}$ such that 
\begin{center}
\begin{enumerate}[label=(\roman*)]
    \item $P_1 \cup P_2 \cup G =\mathbb{R}$
    \item $P_1 \cup G$ and $P_2 \cup G$ are $\mathcal{I}-d$ open and of type Euclidean $F_\sigma$
\end{enumerate}    
\end{center}
Then there exists an $\mathcal{I}-\mathbb{A}\mathbb{C}$ function $g$ such that
\begin{center}
\begin{enumerate}[label=(\roman*)]
    \item $g(x)=0$ for $x \in P_1$
    \item $0<g(x)<1$ for $x \in G$
    \item $g(x)=1$ for $x \in P_2$
\end{enumerate}    
\end{center}
\end{lma}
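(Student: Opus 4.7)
The plan is to construct $g$ as a normalized ratio of two auxiliary functions produced by Theorem 7.2. First I would verify that the sets $H_1 := P_2 \cup G$ and $H_2 := P_1 \cup G$ satisfy the hypotheses of Theorem 7.2. Both are Euclidean $F_\sigma$ by assumption (ii). Moreover, since each is $\mathcal{I}-d$ open, the lower $\mathcal{I}$-density at every point of the set equals $1$, and since the ratio $m(H \cap I)/m(I)$ never exceeds $1$, the upper $\mathcal{I}$-density is also $1$; hence $\mathcal{I}-d(x, H_i) = 1$ for every $x \in H_i$, $i = 1, 2$.

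Theorem 7.2 then yields $\mathcal{I}-\mathbb{A}\mathbb{C}$ functions $g_1, g_2: \mathbb{R} \to \mathbb{R}$ with $0 < g_1 \le 1$ on $P_2 \cup G$, $g_1 \equiv 0$ on $P_1$, and similarly $0 < g_2 \le 1$ on $P_1 \cup G$, $g_2 \equiv 0$ on $P_2$. I would then define
\[
g(x) = \frac{g_1(x)}{g_1(x) + g_2(x)}, \qquad x \in \mathbb{R}.
\]
Because $P_1, P_2, G$ partition $\mathbb{R}$, at every point exactly one of the following holds: $g_1(x) > 0 = g_2(x)$ on $P_2$, $g_2(x) > 0 = g_1(x)$ on $P_1$, or both are strictly positive on $G$. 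In every case $g_1(x) + g_2(x) > 0$, so $g$ is well-defined on all of $\mathbb{R}$. A piecewise inspection gives the desired values: $g = 0$ on $P_1$, $g = 1$ on $P_2$, and $0 < g < 1$ on $G$ (using that both $g_1, g_2 \le 1$ there).

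Finally, I would check that $g$ is $\mathcal{I}-\mathbb{A}\mathbb{C}$. By Theorem 5.2, $g_1 + g_2$ is $\mathcal{I}-\mathbb{A}\mathbb{C}$ as a sum of $\mathcal{I}-\mathbb{A}\mathbb{C}$ functions. Since $(g_1 + g_2)(x) > 0$ for all $x \in \mathbb{R}$, the nonvanishing hypothesis in the reciprocal clause of Theorem 5.2 is satisfied on every neighborhood of every point, so $1/(g_1+g_2)$ is $\mathcal{I}-\mathbb{A}\mathbb{C}$. The product $g = g_1 \cdot \frac{1}{g_1+g_2}$ is therefore $\mathcal{I}-\mathbb{A}\mathbb{C}$ again by Theorem 5.2. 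The only delicate point is the global positivity of $g_1 + g_2$, but this is automatic from the fact that $P_1, P_2, G$ partition $\mathbb{R}$, which keeps the proof free of any pointwise analysis through the definition of $\mathcal{I}$-approximate continuity.
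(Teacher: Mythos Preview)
Your proposal is correct and follows essentially the same route as the paper: apply Theorem 7.2 to the two $F_\sigma$ sets $P_2\cup G$ and $P_1\cup G$ to obtain $g_1,g_2$, then form the ratio $g_1/(g_1+g_2)$ and invoke Theorem 5.2. The paper phrases the last step via $\psi(x_1,x_2)=|x_1|/(|x_1|+|x_2|)$ together with Theorem 5.3, but since $g_1,g_2\ge 0$ this is exactly your formula; your explicit check that $\mathcal{I}\text{-}d$ openness forces the full $\mathcal{I}$-density to equal $1$ (needed to invoke Theorem 7.2) is a detail the paper leaves implicit.
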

\begin{proof}Since, $P_1 \cup G$ and $P_2 \cup G$ both are Euclidean $F_\sigma$ and also $(P_1 \cup G)^c=P_2$ and $(P_2 \cup G)^c=P_1$. So, by Theorem 7.2 there exists two $\mathcal{I}-\mathbb{A}\mathbb{C}$ functions $g_1$ and $g_2$ such that
\begin{center}
   $0<g_1(x)\leq 1$ for $x \in P_2 \cup G$ and $g_1(x)=0$ for $x \in P_1$   \\
   $0<g_2(x)\leq 1$ for $x \in P_1 \cup G$ and $g_2(x)=0$ for $x \in P_2$   
   
\end{center}

Now take, $\psi: (\mathbb{R} \times \mathbb{R})\setminus \{(0,0)\} \rightarrow{[0,1]}$  where $\psi (x_1, x_2)=\frac{|x_1|}{|x_1|+|x_2|}$. Then, 
\begin{center}
    $\psi(0,x_2)=0$ for $x_2 \neq 0$    \\
    $\psi(x_1,0)=1$ for $x_1 \neq 0$  \\
    $0<\psi(x_1,x_2)<1$ for $x_1 \neq 0, x_2 \neq 0$
\end{center}
Then $\psi$ is continuous except at $\{(0,0)\}$. We consider, $g(x)=\psi(g_1(x),g_2(x))$. Since modulus function is continuous, so by Theorem 5.3, $|g_1(x)|$ and $|g_2(x)|$ are $\mathcal{I}-\mathbb{A}\mathbb{C}$. Moreover $|g_1(x)|+|g_2(x)| \neq 0$ for all $x$. Hence, by Theorem 5.2, $g$ is $\mathcal{I}-\mathbb{A}\mathbb{C}$. 

Then for $x \in P_1$, $g(x)=\psi(0,g_2(x))=0$ since $g_2(x)\neq 0$ and for $x \in P_2$, $g(x)=\psi(g_1(x),0)=1$ since  $g_1(x)\neq 0$. Finally, for $x \in G$, $g_1(x) \neq 0$ and $g_2(x) \neq 0$. So, $g(x)=\frac{|g_1(x)|}{|g_1(x)|+|g_2(x)|}$. Thus $0<g(x)<1$ for $x \in G$.

\end{proof}

\begin{thm} The space $(\mathbb{R},\mathfrak{T}_\mathcal{I})$ is completely regular.
\end{thm}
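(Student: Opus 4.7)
The plan is to apply Lemma 7.3 to produce, for any point $x_0\in\mathbb{R}$ and any $\mathcal{I}-d$ closed set $F$ with $x_0\notin F$, an $\mathcal{I}-\mathbb{A}\mathbb{C}$ function $g:\mathbb{R}\to[0,1]$ with $g(x_0)=0$ and $g\equiv 1$ on $F$; Theorem 5.14 will then promote this $g$ to a $\mathfrak{T}_\mathcal{I}$-continuous map into $(\mathbb{R},\mathfrak{T}_U)$, giving complete regularity (together with the Hausdorff property recorded in Proposition 7.1). The main obstacle is that Lemma 7.3 demands the separating sets be not only $\mathcal{I}-d$ open but also of Euclidean $F_\sigma$ type, whereas $\mathbb{R}\setminus F$ is only guaranteed to be Lebesgue measurable.

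I would take $P_1=\{x_0\}$. Then $P_2\cup G = \mathbb{R}\setminus\{x_0\}$ is automatically $\mathfrak{T}_U$-open, hence Euclidean $F_\sigma$, and it is $\mathcal{I}-d$ open because the singleton $\{x_0\}$, being of Lebesgue measure zero, is $\mathcal{I}-d$ closed by Theorem 4.8. To deal with the other requirement, I would first approximate $\mathbb{R}\setminus F$ from the inside by an Euclidean $F_\sigma$ set: since $F$ is $\mathcal{I}-d$ closed we have $\mathbb{R}\setminus F\in\mathcal{L}$, so inner regularity of Lebesgue measure yields an Euclidean $F_\sigma$ set $A\subset \mathbb{R}\setminus F$ with $m((\mathbb{R}\setminus F)\setminus A)=0$. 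Put $A'=A\cup\{x_0\}$, which is still Euclidean $F_\sigma$ and still contained in $\mathbb{R}\setminus F$.

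A short verification shows $A'$ is $\mathcal{I}-d$ open: for every $y\in A'$ and every closed interval $I_n$ about $y$ we have $m((\mathbb{R}\setminus F)\cap I_n)=m(A'\cap I_n)$, because $(\mathbb{R}\setminus F)\setminus A'$ is a null set, and therefore $\mathcal{I}-d(y,A')=\mathcal{I}-d(y,\mathbb{R}\setminus F)=1$, the last equality holding because $\mathbb{R}\setminus F\in\mathfrak{T}_\mathcal{I}$. I would then set $P_2=\mathbb{R}\setminus A'$ and $G=A'\setminus\{x_0\}$, so that $P_1,P_2,G$ are pairwise disjoint with union $\mathbb{R}$, $F\subset P_2$, $P_1\cup G = A'$ is $\mathcal{I}-d$ open and Euclidean $F_\sigma$, and $P_2\cup G=\mathbb{R}\setminus\{x_0\}$ has the same two properties by the previous paragraph. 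Lemma 7.3 then produces the desired $\mathcal{I}-\mathbb{A}\mathbb{C}$ function $g$ with $g(x_0)=0$ and $g\equiv 1$ on $P_2\supset F$, and Theorem 5.14 furnishes the continuity needed to conclude complete regularity.
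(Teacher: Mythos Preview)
Your proof is correct and follows essentially the same approach as the paper. The only cosmetic difference is that you take $P_1=\{x_0\}$ and $P_2\supset F$, whereas the paper swaps these roles (taking $P_1=H\supset F$ via an outer $G_\delta$ approximation and $P_2=\{p_0\}$); your inner $F_\sigma$ approximation $A\subset\mathbb{R}\setminus F$ is exactly the complement of the paper's $G_\delta$ set $H$, and the resulting functions differ only by $g\leftrightarrow 1-g$.
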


\begin{proof} Let $F$ be $\mathcal{I}-d$ closed set in $\mathbb{R}$ and $p_0 \notin F$. Since every $\mathcal{I}-d$ open set is measurable, $F$ is measurable. Let H be an Euclidean $G_\delta$-set such that $F \subset H$, $m(H\setminus F)=0$ and $p_0 \notin H$. Let us put $P_1=H, P_2=\{p_0\}$ and $G=\mathbb{R} \setminus (P_1 \cup P_2)$. Then, $P_1 \cup G=\mathbb{R} \setminus \{p_0\}=(-\infty,p_0)\cup(p_0,\infty)$. Since each of $(-\infty,p_0)$ and $(p_0,\infty)$ are Euclidean $F_\sigma$-set so their union is Euclidean $F_\sigma$-set. Moreover, $(-\infty,p_0)$ and $(p_0,\infty)$ are $\mathfrak{T}_{U}$ open so $\mathcal{I}-d$ open. Again, $P_2 \cup G=\mathbb{R} \setminus H$ is Euclidean $F_\sigma$-set, $H$ being an Euclidean $G_\delta$-set. We observe $\mathbb{R} \setminus H=(\mathbb{R}\setminus F)\setminus (H \setminus F)$. Since $\mathbb{R}\setminus F$ is $\mathcal{I}-d$ open and $m(H\setminus F)=0$ so $\mathbb{R} \setminus H$ is $\mathcal{I}-d$ open. By Lemma 7.3, there exists an $\mathcal{I}-\mathbb{A}\mathbb{C}$ function $g: \mathbb{R} \rightarrow{\mathbb{R}}$ such that
\begin{center}
\begin{enumerate}
    \item $g(x)=0$ for $x \in H$ and $H \supset F$
    \item $0<g(x)<1$ for $x \in G$
    \item $g(x)=1$ for $x=p_0$
\end{enumerate}
\end{center} 
Therefore, $g(x)=0$ on $F$ and $g(p_0)=1$. So, by Theorem 5.14, $g$ is a continuous function on $(\mathbb{R},\mathfrak{T}_\mathcal{I})$. Hence, $(\mathbb{R},\mathfrak{T}_\mathcal{I})$ is completely regular.
\end{proof}

\begin{thm}
The space $(\mathbb{R},\mathfrak{T}_\mathcal{I})$ is not normal.
\end{thm}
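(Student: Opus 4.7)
The plan is to argue by contradiction, converting a hypothetical normal separation of two $\mathcal{I}-d$ closed sets into a Baire-category impossibility in the natural topology on $\mathbb{R}$. The bridge from $\mathfrak{T}_\mathcal{I}$ to $\mathfrak{T}_U$ is already furnished by Section~5: continuous maps from $(\mathbb{R},\mathfrak{T}_\mathcal{I})$ to $(\mathbb{R},\mathfrak{T}_U)$ are $\mathcal{I}-\mathbb{A}\mathbb{C}$ (Theorem~5.14), hence lie in the first Baire class (Theorem~5.9), hence have sublevel and superlevel sets of type Euclidean $F_\sigma$ (Theorem~5.8).

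The two concrete witnesses I would take are $\mathbb{Q}$ (the rationals) and $L$ (the set of Liouville numbers). Both are Lebesgue null and measurable, so by Theorem~4.8 they are $\mathcal{I}-d$ closed; they are disjoint because every Liouville number is irrational; and — the property that matters — both are dense subsets of $(\mathbb{R},\mathfrak{T}_U)$, with $L$ a dense Euclidean $G_\delta$. Suppose for contradiction that $(\mathbb{R},\mathfrak{T}_\mathcal{I})$ is normal. Since it is Hausdorff (Proposition~7.1), Urysohn's lemma produces a $\mathfrak{T}_\mathcal{I}$-to-$\mathfrak{T}_U$ continuous $f:\mathbb{R}\to[0,1]$ with $f|_{\mathbb{Q}}\equiv 0$ and $f|_{L}\equiv 1$. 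Applying the three theorems of Section~5 listed above to $f$, the sets $U=\{x:f(x)<\tfrac12\}$ and $V=\{x:f(x)>\tfrac12\}$ are disjoint Euclidean $F_\sigma$ sets with $\mathbb{Q}\subset U$ and $L\subset V$.

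The main obstacle — and the only genuinely new piece of reasoning — is ruling out such a pair $(U,V)$. For this I would pass to complements: $W=\mathbb{R}\setminus V$ is a Euclidean $G_\delta$ that contains $U$ and therefore $\mathbb{Q}$, and is disjoint from $L$. Because $\mathbb{Q}$ is $\mathfrak{T}_U$-dense, every open set in the representation $W=\bigcap_n W_n$ is open and dense, so $W$ is a dense $G_\delta$; and $L$ is also a dense $G_\delta$ in $(\mathbb{R},\mathfrak{T}_U)$. The Baire category theorem applied in the Polish space $(\mathbb{R},\mathfrak{T}_U)$ then forces $W\cap L$ to be a dense $G_\delta$, in particular nonempty, contradicting $L\subset V=\mathbb{R}\setminus W$. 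This contradiction shows that $(\mathbb{R},\mathfrak{T}_\mathcal{I})$ cannot be normal.
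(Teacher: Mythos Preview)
Your argument is correct and takes a genuinely different route from the paper. The paper's own proof is a single line: since $(\mathbb{R},\mathfrak{T}_d)$ is not normal and $\mathfrak{T}_d\subset\mathfrak{T}_\mathcal{I}$ by Theorem~4.3, $(\mathbb{R},\mathfrak{T}_\mathcal{I})$ is not normal. Taken literally that inference is not valid, because non-normality does not in general pass to finer topologies (the discrete topology refines every topology and is normal); what really transfers from $\mathfrak{T}_d$ to $\mathfrak{T}_\mathcal{I}$ is the \emph{witnessing pair of null closed sets together with the Baire-class-one constraint on Urysohn functions}, and that is precisely what you supply directly. You exhibit two disjoint Lebesgue-null sets $\mathbb{Q}$ and $L$, use Theorem~4.8 to see they are $\mathfrak{T}_\mathcal{I}$-closed, invoke Urysohn's lemma under the normality hypothesis, and then combine Theorems~5.14, 5.9 and~5.8 to force the separating function to have Euclidean $F_\sigma$ sub- and super-level sets, which a Baire-category argument against the dense $G_\delta$ set $L$ rules out. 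Your approach is self-contained, does not appeal to the $\mathfrak{T}_d$ result, and makes transparent which structural facts (null sets are closed; continuous real functions are Baire class one) drive the failure of normality. One cosmetic point: Theorems~5.8 and~5.9 are stated for functions on a fixed interval, so strictly you should restrict $f$ to each $[-n,n]$ and take unions before concluding that $U$ and $V$ are Euclidean $F_\sigma$ in~$\mathbb{R}$; this is routine since $f$ is bounded.
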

\begin{proof}
Since $(\mathbb{R},\mathfrak{T}_d)$ is not normal and by Theorem 4.3. $\mathfrak{T}_\mathcal{I}$ is finer than $\mathfrak{T}_d$ so $(\mathbb{R},\mathfrak{T}_{\mathcal{I}})$ is not normal.
\end{proof}

\section*{Acknowledgements}
\textit{The second author is thankful to The Council of Scientific and Industrial Research (CSIR), Government of India, for giving the award of Junior Research Fellowship (File no. 09/025(0277)/2019-EMR-I) during the tenure of preparation of this research paper.}


\begin{thebibliography}{99}
		
\bibitem{banerjee}
A. K. Banerjee,
\textit{Sparse set topology and the space of proximally continuous mappings,}
South Asian Journal of Mathematics, \textbf{6(2)} (2016), 58-63.

\bibitem{banerjee 2}
A. K. Banerjee and A. Banerjee,
\textit{I-convergence classes of sequences and nets in topological spaces,}
Jordan J. Math. Stat., \textbf{11(1)} (2018), 13-31. 

\bibitem{banerjee 3}
A. K. Banerjee and A. Banerjee,
\textit{A study on I-Cauchy sequences and I-divergence in S-metric spaces,}
Malaya J. Mat., \textbf{6(2)} (2018), 326-330. 

\bibitem{BRUCKNER}
A. M. Bruckner,
\textit{Differentiation of Real Functions,}
Lecture Notes in Math. 659, Springer-Verlag, 1978.


\bibitem{das}
P. Das and A. K. Banerjee,
\textit{On the sparse set topology,}
Math. Slovaca, \textbf{60(3)} (2010), 319-326. 

\bibitem{Demirci}
K. Demirci,
\textit{I-limit superior and limit inferior,}
Math. Commun., \textbf{6} (2001), 165-172. 

\bibitem{Fast}
H. Fast,
\textit{ Sur la convergence statistique,}
Colloq. Math., \textbf{2} (1951), 241-244. 

\bibitem{Filipczak 2004}
M. Filipczak and J. Hejduk,
\textit{On topologies associated with the Lebesgue measure,}
Tatra Mt. Math. Publ., \textbf{28} (2004), 187-197. 

\bibitem{Goffman 1961}
C. Goffman, C. J. Neugebauer and T. Nishiura,
\textit{ Density topology and approximate continuity,}
Duke Math. J., \textbf{28} (1961), 497-503. 

\bibitem{halmos}
P. R. Halmos,
\textit{Measure Theory,}
Springer-Verlag, New York, 1974.

\bibitem{Hejduk 2014}
J. Hejduk and R. Wiertelak,
\textit{ On the generalization of density topologies on the real line,}
Math. Slovaca, \textbf{64} (2014), 1267-1276. 

\bibitem{Hejduk 2015}
J. Hejduk, A. Loranty and R. Wiertelak,
\textit{ $\mathcal{J}$-approximately continuous functions,}
Tatra Mt. Math. Publ., \textbf{62} (2015), 45-55. 


\bibitem{Hejduk 2017}
J. Hejduk and R. Wiertelak,
\textit{ On some properties of $\mathcal{J}$-approximately continuous functions,}
Math. Slovaca, \textbf{67} (2017), 1323-1332. 

\bibitem{kechris}
A. S. Kechris,
\textit{Classical Descriptive Set Theory,}
Springer-Verlag, New York, 1995.

\bibitem{Kostyrko 2000}
P. Kostyrko, T. Śalát and W. Wilczyński,
\textit{$I$-convergence,}
Real Anal. Exchange, \textbf{26(2)} (2000/2001), 669-686. 

\bibitem{Kostyrko 2005}
P. Kostyrko, M. Macaj, T. Śalát and M. Sleziak,
\textit{$I$-convergence and extremal $I$-limit point,}
Math. Slovaca, \textbf{55(4)} (2005), 443-464. 

\bibitem{Lahiri 1998}
B. K. Lahiri and P. Das,
\textit{Density topology in a metric space,}
J. Indian Math. Soc.(N.S.), \textbf{65} (1998), 107-117.

\bibitem{Lahiri 2003}
B. K. Lahiri and P. Das,
\textit{ Further results on $I$-limit superior and $I$-limit inferior,}
Math. Commun., \textbf{8} (2003), 151-156. 

\bibitem{Lahiri 2005}
B. K. Lahiri and P. Das,
\textit{ $I$ and $I^{\star}$-convergence in topological spaces,}
Math. Bohem., \textbf{130(2)} (2005), 153-160. 

\bibitem{Martin}
N. F. G. Martin,
\textit{ A topology for certain measure spaces,}
Trans. Amer. Math. Soc., \textbf{112} (1964), 1-18. 

\bibitem{natanson}
I. P. Natanson,
\textit{Theory of Functions of a Real Variable Vol. II,}
Frederick Ungar Publishing Co., New York, 1960.

\bibitem{Oxtoby}
J. C. Oxtoby,
\textit{ Measure and Category,}
Springer-Verlag, Berlin, 1987.

\bibitem{Riesz}
F. Riesz.,
\textit{Sur les points de densite au sens fort,}
Fund. Math., \textbf{22} (1934), 221-225.

\bibitem{Schoenberg}
I. J. Schoenberg,
\textit{The integrability of certain functions and related summability methods,}
Amer. Math. Monthly, \textbf{66} (1959), 361-375. 

\bibitem{Strobin}
F. Strobin and R. Wiertelak,
\textit{On a generalization of density topologies on the real line,}
Topology Appl., \textbf{199} (2016), 1-16. 

\bibitem{Troyer}
R. J. Troyer and W. P. Ziemer,
\textit{ Topologies generated by outer measures,}
J. Math. Mech., \textbf{12(3)} (1963), 485-494. 

\bibitem{White}
H. E. White,
\textit{ Topological spaces in which Blumberg’s theorem holds,}
Proc. Amer. Math. Soc., \textbf{44} (1974), 454-462. 

\bibitem{Density topologies}
W. Wilczyński,
\textit{Density topologies, In: Handbook of Measure Theory,}
North-Holland, Amsterdam, 2002, 675-702.

\bibitem{Zahorski}
Z. Zahorski,
\textit{ Sur la premiere derivee,}
Trans. Amer. Math. Soc., \textbf{69} (1950), 1-54. 

	
	
\end{thebibliography}
\end{document}